\theoremstyle{definition}
\newtheorem{defn}{Definition}[section]
\newtheorem{remark}[defn]{Remark}
\theoremstyle{plain}
\newtheorem{thm}[defn]{Theorem}
\newtheorem{prop}[defn]{Proposition}
\newtheorem{lem}[defn]{Lemma} 
\newtheorem{cor}[defn]{Corollary}
\newtheorem*{namedtheorem}{\theoremname}
\newcommand{\theoremname}{testing}
\newenvironment{named}[1]{\renewcommand{\theoremname}{#1}\begin{namedtheorem}}{\end{namedtheorem}}
\DeclarePairedDelimiter{\ceil}{\lceil}{\rceil}
\DeclarePairedDelimiter{\floor}{\lfloor}{\rfloor}
\newcommand{\jw}{\vcenter{\hbox{\includegraphics[scale=.1]{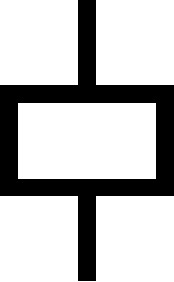}}}}
\newcommand{\cir}{  \vcenter{\hbox{\includegraphics[scale=.15]{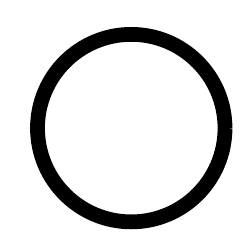}}}}
\newcommand{\cirj}{  \vcenter{\hbox{\includegraphics[scale=.15]{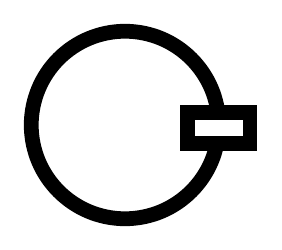}}}}
\newcommand{\sk}{\mathcal{S}}
\newcommand{\writhe}{\mathrm{wr}}
\newcommand{\sgn}{\mathrm{sgn}}
\newcommand{\Dj}{\mathbf{D}}
\newcommand{\dd}{\mathcal{D}}
\newcommand{\ac}{\mathcal{A}}
\begin{document}

\title{Jones diameter and crossing number of knots}

\author[E. Kalfagianni]{Efstratia Kalfagianni}
\author[C. Lee]{Christine Ruey Shan Lee}

\address[]{Department of Mathematics, Michigan State University, East
Lansing, MI, 48824}
\email[]{kalfagia@msu.edu}

\address[]{Department of Mathematics, Texas State University, San Marcos, TX, 78666}
\email[]{vne11@txstate.edu}

\begin{abstract}It has long been known that the quadratic term in the degree of the colored Jones polynomial of a knot is bounded above in terms of the crossing number of the knot.
We show that this bound is sharp if and only if the knot is adequate.

 As an application of our result we determine the crossing numbers of broad families of non-adequate prime satellite knots.  More specifically, we 
exhibit minimal crossing number  diagrams for  untwisted Whitehead doubles of zero-writhe adequate knots.  This allows us to determine
the crossing number of untwisted Whitehead doubles of any amphicheiral adequate knot, including, for instance, the Whitehead doubles 
of the connected sum of any alternating knot with its mirror image. 

We also determine the crossing number of the connected sum of  any adequate knot with an untwisted Whitehead double of a zero-writhe adequate knot.
 \end{abstract}


\maketitle
\section{Introduction}

Given a knot $K$ let   $J_K(n)$
denote its $n$-th colored Jones polynomial, which is a Laurent polynomial in a variable $t$. Let  $d_+[J_K(n)]$ and $d_-[J_K(n)]$ denote the maximal and minimal degree of $J_K(n)$ in $t$ and set $$d[J_K(n)]:=4d_+[J_K(n)]-4d_-[J_K(n)].$$ The set of cluster points  $\left\{ n^{-2}d[J_K(n)]  \right\}'_{n{\in \mathbb N}}$ is known to be finite 
and the  point with the largest absolute value, denoted by $jd_K$, is called  the {\em Jones  diameter}  of $K$.  For precise definitions of the terms used here the reader is referred to Section 2.

Given a knot $K$ we will use $c(K)$ to denote the crossing number of $K$, the smallest number of crossings over all diagrams that represent $K$.
We prove the following.
\begin{thm} \label{main} Let $K$ be a knot with Jones diameter $jd_K$ and crossing number $c(K)$. Then,
$$jd_K \leq 2c(K),$$ 
with equality $jd_K =2c(K)$
if and only if $K$ is adequate.
\end{thm}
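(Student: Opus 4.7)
The plan is to prove the two directions of Theorem~\ref{main} by a careful state-sum analysis of the Kauffman bracket applied to cabled diagrams decorated with Jones--Wenzl idempotents. The overall strategy has three parts: (i) establish the universal upper bound $dj_K \leq 2c(D)$ for any diagram $D$ of $K$, which in particular gives $dj_K \leq 2c(K)$; (ii) check that an adequate diagram saturates this bound, proving the easy direction; (iii) show that a non-adequate minimum crossing diagram cannot saturate the bound, proving the hard direction.

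First, I would set up the universal upper bound. The $n$-th colored Jones polynomial of $K$ is computed from the $(n-1)$-cable of $D$ with Jones--Wenzl idempotents inserted. Expanding via the Kauffman state sum, each state's $A$-degree is controlled by the numbers of $A$- and $B$-resolutions and of resulting circles. Tracking the top and bottom extreme degrees and summing over the idempotent expansion yields estimates of the form $4d_+[J_K(n)] \leq c(D) n^2 + O(n)$ and $-4d_-[J_K(n)] \leq c(D) n^2 + O(n)$, whence $d[J_K(n)] \leq 2c(D) n^2 + O(n)$ and every cluster point of $\{n^{-2} d[J_K(n)]\}$ is at most $2c(D)$. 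Choosing $D$ to be a minimum crossing diagram gives $dj_K \leq 2c(K)$.

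For the direction $K$ adequate $\Rightarrow dj_K = 2c(K)$, I would pick an adequate diagram $D$ of $K$. By Lickorish--Thistlethwaite/Stoimenow, adequate diagrams realize the crossing number, so $c(D)=c(K)$. In the state-sum above, the extremal contributions come from the all-$A$ and all-$B$ resolutions and their cabled analogues; adequacy means that at every crossing the resolution joins two distinct state circles, which forces these extremal contributions to survive without cancellation in the leading $n^{2}$ coefficient. Hence $n^{-2} d[J_K(n)] \to 2c(D) = 2c(K)$, giving $dj_K = 2c(K)$.

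The hard direction is the converse: assuming $dj_K = 2c(K)$, show $K$ is adequate. Let $D$ be any minimum crossing diagram of $K$; the hypothesis forces $dj_K = 2c(D)$. The key claim I would aim to establish is a \emph{quantitative} strengthening of the universal upper bound: if $D$ is not $+$-adequate, then the $n^{2}$ coefficient of $4d_+[J_K(n)]$ is strictly less than $c(D)$, and symmetrically for $-$-adequacy and $d_-[J_K(n)]$. Granting this, saturation of $dj_K = 2c(D)$ forces $D$ to be both $+$- and $-$-adequate, so $K$ is adequate. To prove the quantitative claim, I would locate a bad crossing (where the $A$-resolution joins a state circle to itself) and analyze how the Jones--Wenzl projector, via Wenzl's recursion, forces a cancellation in the top-degree terms of the cabled state sum that persists uniformly in $n$; the presence of this cancellation should reduce the leading $n^2$ coefficient by a positive quantity.

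The main obstacle is exactly this quantitative strengthening: the naive state-sum argument only shows an $O(n)$ defect when the extremal states cancel, which is invisible to the $n^{-2}$-rescaled cluster points. Thus one must carefully exploit the structure of the Jones--Wenzl idempotent at the offending crossing to produce an $n^{2}$-order gap, and control this gap uniformly so that it survives under the lower-order noise arising from the other crossings and the Wenzl recursion. I would expect to reduce matters to a combinatorial statement about top-degree coefficients of ``state graph'' contributions and verify that their signed sum vanishes to leading order precisely when $D$ is $\pm$-adequate.
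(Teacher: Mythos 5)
Your overall architecture is the same as the paper's: the inequality $dj_K\leq 2c(K)$ and its saturation for adequate diagrams are the classical bounds (Lemma \ref{known}), and the substance of Theorem \ref{main} is exactly the quantitative strengthening you isolate in part (iii) --- that a non-adequate diagram $D$ forces a deficit of order $n^2$ in the degree span, which is the paper's Theorem \ref{t.mm}. (Two small slips along the way: the correct individual bounds are $4d_+[J_K(n)]\leq 2c_+(D)n^2+O(n)$ and $-4d_-[J_K(n)]\leq 2c_-(D)n^2+O(n)$, not $c(D)n^2$ each, and passing from $dj_K=2c(K)$ to ``the quadratic coefficients are attained along a common subsequence of $n$'' needs the quasi-polynomiality of $d_\pm[J_K(n)]$ and a small argument with the Jones period, as in the proof of Theorem \ref{slopes}(c). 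Both are repairable.)

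The genuine gap is that the key step is only asserted, and the mechanism you sketch for it is not one that works. You propose to show that at a bad crossing the ``signed sum of top-degree state contributions vanishes to leading order,'' i.e.\ to extract the $n^2$-order gap from a cancellation phenomenon. As you yourself observe, cancellation among extremal states generically produces only an $O(n)$ (indeed often $O(1)$) drop, which is invisible at the $n^{-2}$ scale; nothing in your outline explains how the Wenzl recursion upgrades this to an $n^2$-order defect, and I do not believe it does. The paper's actual argument is structurally different: it is not a cancellation argument at all. One \emph{fuses} the cable of the bad crossing $x$, writing $\Dj^n$ as a sum over a fusion parameter $a$ and Kauffman states $\sigma$ of the complementary tangle; a vanishing criterion (Lemma \ref{l.dskein}) forces $a\leq 2k_\sigma$ where $2k_\sigma$ is the number of through strands of the complementary tangle; and then one shows that \emph{every individual surviving term} has degree at most $H_n(D)-\rho n^2$, by converting through strands into closed walks on $\mathbb{G}_A$ based at the loop vertex and showing (Lemma \ref{l.xoriented}, plus a minimal-partition estimate) that $2k_\sigma$ through strands cost at least quadratically many $B$-resolutions, while the fusion coefficient $I(a,-1,n)$ loses $n^2-a^2/2$ in degree when $a<2n$. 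Without this (or some equally quantitative substitute), the converse direction of Theorem \ref{main} is not established.
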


Adequate knots form a broad class that contains in particular all alternating knots.
 The works of Kauffman, Murasugi, and Thistlethwaite  \cite{Lickorishbook,  Kauffmanstates, Murasugi, alternating} imply that  for any knot $K$ we have  $jd_K \leq 2c(K),$ and that we have equality if $K$ is adequate. Our contribution here is
to show that if $jd_K = 2c(K)$, then $K$ must be adequate.

 Theorem \ref{main} has significant applications to the study of knot crossing  numbers. To state our result, recall
 that the
writhe of an adequate diagram $D=D(K)$ is an invariant of the knot $K$ \cite{Lickorishbook}.
We will use $\writhe(K)$ to denote this invariant.

\begin{thm}\label{doubleintro} For a knot $K$  with crossing number $c(K)$, let $W_{\pm}(K)$  denote its positive or negative untwisted Whitehead double.
Suppose that $K$ 
 is a non-trivial  adequate knot with $\mathrm{wr}(K)=0$.   Then,  $W_{\pm}(K)$ is non-adequate  and we have
$c(W(K))=4 c(K)+2$.
\end{thm}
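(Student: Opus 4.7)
The upper bound is constructed geometrically. Let $D$ be a minimum-crossing diagram of $K$. Since $K$ is a non-trivial adequate knot, its writhe invariant is realized by $D$, so $\writhe(D) = \writhe(K) = 0$. Taking the blackboard-framed $2$-cable $D^{(2)}$ of $D$ gives a diagram with $4c(K)$ crossings and zero writhe; inserting the positive or negative Whitehead clasp contributes exactly $2$ crossings, so the resulting diagram $D_W$ has $c(D_W) = 4c(K)+2$ and represents $W_\pm(K)$. Hence $c(W_\pm(K)) \leq 4c(K)+2$.

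The matching lower bound follows from Theorem~\ref{main}, once two ingredients are established. The first is the explicit value
\[
dj_{W_\pm(K)} = 8c(K)+2,
\]
which the plan is to obtain by a Kauffman state-sum analysis on $D_W$. Because $D$ is adequate, the $2$-cable portion $D^{(2)}$ inherits a controlled adequacy structure and contributes the maximum $8c(K)$ to the diameter; the two clasp crossings then contribute an additional $2$ rather than the maximal $4$, because one of their smoothings produces a small state circle that creates a loop edge in the all-$A$ or all-$B$ state graph (the standard adequacy defect of a Whitehead clasp). The second ingredient is the non-adequacy of $W_\pm(K)$, which the plan is to extract from the same state-sum calculation: the extreme coefficients of $J_{W_\pm(K)}(n)$ acquire a nontrivial contribution from the colored Jones polynomial of the non-trivial adequate knot $K$, whereas an adequate knot has unit extreme coefficients.

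Combining these, non-adequacy of $W_\pm(K)$ upgrades the inequality of Theorem~\ref{main} to the strict form $dj_{W_\pm(K)} < 2c(W_\pm(K))$, giving $8c(K)+2 < 2c(W_\pm(K))$. Integrality of $c(W_\pm(K))$ then yields $c(W_\pm(K)) \geq 4c(K)+2$, matching the upper bound and completing the proof.

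The main obstacle is the Kauffman state-sum computation: one must track how adequacy of $D$ propagates to the state graphs of the $2$-cable, and precisely quantify the adequacy defect of $D_W$ coming from the clasp as equal to $2$ rather than anything larger. The hypothesis $\writhe(K)=0$ is essential, since any writhe-compensating twists inserted into the $2$-cable would simultaneously change the crossing count of $D_W$ and perturb the adequacy defect of its state graphs, breaking the sharp equality.
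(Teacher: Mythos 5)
Your overall skeleton is exactly the paper's: build the doubled diagram $D_W$ with $4c(K)+2$ crossings for the upper bound, compute $dj_{W_\pm(K)}=8c(K)+2$, prove non-adequacy, and then conclude $c(W_\pm(K))>4c(K)+1$ from Theorem \ref{main} (this last step is precisely Corollary \ref{criterion}). But both of your key ingredients are asserted rather than obtainable by the methods you propose, and this is where essentially all of the work lies. First, the Jones diameter cannot be pinned down by a Kauffman state-sum analysis on $D_W$: state sums on a diagram only give one-sided bounds on $d_\pm[J(n)]$, and they are sharp exactly when the diagram is adequate --- which $D_W$ is not (it is $B$-adequate but not $A$-adequate, because of the clasp). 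Indeed, Theorem \ref{t.mm} says precisely that for a non-adequate diagram the state-sum bound is \emph{not} attained, with a quadratic defect $4p(D)n^2$ whose constant is not computed by the state sum. So your heuristic that ``the clasp contributes $2$ rather than $4$'' quantifies an upper bound at best, not the actual value of $dj_{W_\pm(K)}$. The paper instead gets the exact degrees $d_\pm[J_{W_\pm(K)}(n)]$ from the Baker--Motegi--Takata cabling formula (Proposition \ref{BMTprop}), expressed in terms of the degree data of $J_K(n)$, which is known exactly because $K$ is adequate. Without some such exact computation your first ingredient is a genuine gap. (A minor additional point: you should take $D$ to be an \emph{adequate} diagram rather than an arbitrary minimal-crossing diagram; adequate diagrams are minimal, but a minimal diagram need not be adequate, and it is the adequate diagram whose writhe is the invariant $\writhe(K)$.)

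Second, your route to non-adequacy --- that the extreme coefficients of $J_{W_\pm(K)}(n)$ fail to be $\pm1$ --- is a legitimate necessary condition in principle, but you give no argument that it holds, and computing extreme coefficients of colored Jones polynomials of satellites is substantially harder than computing degrees; the cabling formulas available control degrees, not coefficients, and there is no a priori reason the leading coefficient does not collapse to $\pm1$. The paper's actual argument is quite different and purely degree-theoretic: assuming $W_-(K)$ adequate, Lemma \ref{ifadequate} forces $c(W_-(K))=4c(K)+1$ and pins down $g_T(W_-(K))$, hence the numbers $v_A$, $v_B$, $c_\pm$ of the hypothetical adequate diagram; then the linear term of $-4d_-[J_{W_-(K)}(n)]$ is computed in two ways (once from the adequate-diagram formula \eqref{top}, once from Proposition \ref{BMTprop} applied to the mirror), yielding $8c(D)-4v_A(D)+2 = 8c(D)-4v_B(D)$, which is impossible mod $4$. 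You would need to either carry out this kind of two-sided degree comparison or actually establish the non-unit coefficient claim; as written, the non-adequacy step is also a gap.
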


Theorem \ref{doubleintro} should be compared with classical results in the literature asserting that the crossing numbers of several important classes of knots are realized by a ``special type" of knot diagrams. 
These classes include  alternating and more generally adequate knots, torus knots  and Montesinos knots \cite{ Kauffmanstates, Murasugi, alternating}.
The works of  Murasugi, Kauffman and Thistlethwaite, that settled the well known Tait  conjectures on alternating knots,  showed that adequate diagrams realize the crossing numbers of knots they represent. 
Moreover they showed that the crossing number is additive under connected sum of adequate knots. These results used in an essential way properties of the degree of the Jones polynomial.
The Jones polynomial was also used to derive lower bounds on crossing numbers of Whitehead doubles of adequate knots in an unpublished preprint of Pascual \cite{Pascual}. However, these bounds are not
strong enough to exactly determine the crossing numbers of any of these knots. For the proof of Theorem \ref{doubleintro} it is crucial that we have a sharper lower bound, that is also derived using the colored Jones polynomial,
and Theorem \ref{main}. See section 5 for details. To the best of our knowledge, Theorem \ref{doubleintro} and Corollary \ref{mirror} below are the first instances of results that  allow the exact determination of the crossing numbers for broad families of prime satellite knots.

Theorem  \ref{doubleintro}   applies  to adequate knots  that are equivalent to their mirror images (a.k.a. amphicheiral) since such knots must have $\writhe(K)=0$. One way to obtain an amphicheiral adequate knot is to take the connected sum of an adequate knot with its mirror image. For more discussion and examples of prime, amphicheiral, adequate knots see Section 5.

\begin{cor} \label{mirror}For a knot $K$ let $K^{*}$ denote the mirror image of $K$ and, for every  $m>0$, let $K_m:=\#^{m}(K\# K^{*})$ denote the connected sum of $m$-copies of $K\# K^{*}$. Suppose that $K$ is adequate with crossing number $c(K)$. Then, the untwisted Whitehead doubles $W_{\pm}(K_m)$ are non-adequate, and we have
$c(W_{\pm}(K))=8 m c(K)+2.$
\end{cor}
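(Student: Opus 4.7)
The plan is to deduce Corollary \ref{mirror} as a direct consequence of Theorem \ref{doubleintro} applied to $K_m$. To do this, I need to verify that $K_m$ satisfies the hypotheses of that theorem — namely that $K_m$ is a non-trivial adequate knot with $\writhe(K_m)=0$ — and then compute $c(K_m)$ so that the conclusion $c(W_{\pm}(K_m))=4c(K_m)+2$ takes the stated form $8m\,c(K)+2$. (I read the statement $c(W_{\pm}(K))=8m\,c(K)+2$ as a typo for $c(W_{\pm}(K_m))$.)

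First I would record the three standard diagrammatic facts I need. (i) If $D$ is an adequate diagram of $K$, then the mirror diagram $D^{*}$, obtained by switching every crossing of $D$, is an adequate diagram of $K^{*}$; in particular $K^{*}$ is adequate, has $c(K^{*})=c(K)$, and $\writhe(K^{*})=-\writhe(K)$. (ii) The connected sum of two adequate knots, formed along an outermost arc in each of two adequate diagrams, is again adequate; this follows from the multiplicativity of the Kauffman bracket and the compatibility of the $A$- and $B$-state graphs under the band-sum operation. (iii) The writhe is additive under connected sum, and so is the crossing number in the adequate category, by the Kauffman–Murasugi–Thistlethwaite resolution of the Tait conjectures cited in the introduction.

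Applying these facts inductively to $K_m=\#^{m}(K\#K^{*})$, I conclude that $K_m$ is adequate, non-trivial, has $\writhe(K_m)=m\bigl(\writhe(K)+\writhe(K^{*})\bigr)=0$, and satisfies
\[
c(K_m)=2m\, c(K).
\]
Since the hypotheses of Theorem \ref{doubleintro} are now verified, that theorem yields that $W_{\pm}(K_m)$ is non-adequate and
\[
c\bigl(W_{\pm}(K_m)\bigr)=4\,c(K_m)+2=8m\,c(K)+2,
\]
which is the desired conclusion.

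There is no real obstacle here beyond bookkeeping: the substantive content — producing a minimal-crossing diagram for the untwisted Whitehead double of a zero-writhe adequate knot and proving non-adequacy — is already packaged in Theorem \ref{doubleintro}. The mildest point to be careful about is justifying that adequacy is preserved by the specific connected-sum construction and by mirroring; once those two diagrammatic observations are in hand, the corollary follows in one line from Theorem \ref{doubleintro} together with additivity of $c(\cdot)$ and $\writhe(\cdot)$ on adequate summands.
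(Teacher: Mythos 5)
Your proof is correct and follows essentially the same route as the paper's: both arguments verify that $K_m$ is adequate with $\writhe(K_m)=0$ by mirroring an adequate diagram and forming connected sums, use additivity of the crossing number for adequate knots to get $c(K_m)=2m\,c(K)$, and then invoke the result on untwisted Whitehead doubles of zero-writhe adequate knots (the paper routes this through Corollary \ref{amphi}, which is just Theorem \ref{doubleadequate} specialized to writhe zero). You also correctly read $c(W_{\pm}(K))=8m\,c(K)+2$ as a typo for $c(W_{\pm}(K_m))$.
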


The method we use for the proof of Theorem  \ref{doubleintro}  also leads to the following application to the open conjecture on the additivity of crossing numbers  \cite[Problem 1.68]{Kirby}  under connected sums of knots. 
\begin{thm} \label{sum}
Suppose that $K$ is an adequate knot with $\mathrm{wr}(K)=0$ and let
  $K_1:=W_{\pm}(K)$. Then for any adequate knot $K_2$, the connected sum $K_1\# K_2$
is non-adequate and we have
  $$c(K_1\# K_2)=c(K_1)+c(K_2).$$
\end{thm}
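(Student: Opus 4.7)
The plan is to use additivity of the colored Jones polynomial degrees under connect sum together with the sharp computation underlying the proof of Theorem~\ref{doubleintro}.

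For the upper bound, take the connect sum of the standard Whitehead double diagram $D_1$ of $K_1$ (with $c(D_1)=c(K_1)=4c(K)+2$, produced in the proof of Theorem~\ref{doubleintro}) with a minimal adequate diagram $D_2$ of $K_2$; this yields a diagram of $K_1\#K_2$ with $c(K_1)+c(K_2)$ crossings. For the non-adequacy of $K_1\#K_2$, I use the standard fact that an adequate diagram of a composite knot decomposes along an essential summing sphere into adequate diagrams of its factors (the all-$A$ and all-$B$ state circles split compatibly with the sphere). Hence, if $K_1\#K_2$ were adequate, $K_1$ would be adequate too, contradicting Theorem~\ref{doubleintro}.

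For the lower bound I work with the one-sided invariant $dj_K^+$, the leading $n^2$-coefficient of $4d_+[J_K(n)]$ (the positive Jones growth rate). Two standard facts drive the proof. First, the Kauffman bracket state-sum analysis applied to the $n$-cable of any diagram $D$ of a knot $K$ gives $dj_K^+\le c(D)$, with equality in the large-$n$ limit iff $D$ is $A$-adequate. Second, $dj_K^+$ is additive under connect sum: the colored Jones polynomial is multiplicative under connect sum modulo an unknot factor whose $d_+$ grows only linearly in $n$, so the $n^2$-coefficient is unaffected. In the proof of Theorem~\ref{doubleintro}, the standard Whitehead double diagram $D_1$ is shown to be $A$-adequate for $W_+(K)$ (and the $W_-(K)$ case is symmetric via $dj_K^-$), which gives
\[
  dj_{K_1}^{+} \;=\; c(D_1) \;=\; c(K_1).
\]
For the adequate knot $K_2$, any minimal adequate diagram is in particular $A$-adequate, so $dj_{K_2}^{+}=c(K_2)$. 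Adding,
\[
  dj_{K_1\#K_2}^{+} \;=\; dj_{K_1}^{+} + dj_{K_2}^{+} \;=\; c(K_1)+c(K_2).
\]
Since $dj_{K_1\#K_2}^{+}\le c(D)$ for every diagram $D$ of $K_1\#K_2$, I conclude $c(K_1\#K_2)\ge c(K_1)+c(K_2)$, matching the upper bound.

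The main obstacle is the $A$-adequacy of the standard Whitehead double diagram $D_1$ of a zero-writhe adequate knot $K$, which is exactly the diagrammatic content of the proof of Theorem~\ref{doubleintro}; once that is established, the additivity of $dj^{+}$ and the Kauffman state-sum bound combine essentially formally to close the argument. A mild subtlety is the case distinction between $W_{+}$ and $W_{-}$: depending on the sign of the clasp, it is $dj_{K_1}^{+}$ or $dj_{K_1}^{-}$ that equals $c(K_1)$, but both arguments are parallel and Theorem~\ref{main}'s one-sided state-sum inequality applies equally to $dj^{+}$ and $dj^{-}$.
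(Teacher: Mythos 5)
There are two genuine gaps here, one in each half of your argument.

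\textbf{Non-adequacy.} You dismiss this with the claim that an adequate diagram of a composite knot must decompose along the summing sphere into adequate diagrams of the factors, so that adequacy of $K_1\# K_2$ would force adequacy of $K_1$. This is not a standard fact, and you give no argument for it: nothing guarantees that an adequate diagram of $K_1\# K_2$ is a connected-sum diagram at all (a Menasco-type position result for essential spheres is known for alternating diagrams, not for adequate ones), so there is no sphere along which the all-$A$ and all-$B$ state circles could ``split compatibly.'' This step is precisely what the paper's Lemma \ref{notadequate2} labors to establish, by an entirely different route: one assumes $K_1\# K_2$ adequate, uses Lemma \ref{ifadequate} and the additivity of degrees (Lemma \ref{elementary}) to pin down $c({\bar D})$, $g_T({\bar D})$, $v_A({\bar D})$, $v_B({\bar D})$ for a hypothetical adequate diagram ${\bar D}$, and then derives a contradiction by computing the \emph{linear} coefficient $y(K_1\# K_2)$ of $-4d_-[J_{K_1\# K_2}(n)]$ in two ways (via Equation (\ref{top}) applied to ${\bar D}$, and via Proposition \ref{BMTprop}). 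Without some such argument the non-adequacy assertion is unproved.

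\textbf{Lower bound.} Your one-sided invariant $dj^{+}_K$ (the leading coefficient of $4d_+[J_K(n)]$) controls only $c_+(D)$, not $c(D)$: Lemma \ref{known}(a) gives $4d_+[J_K(n)]\leq 2c_+(D)n^2+O(n)$ (with equality for $B$-adequate, not $A$-adequate, diagrams). The identity $dj^{+}_{K_1}=c(K_1)$ you rely on is false. By Proposition \ref{BMTprop}(a), for $K_1=W_-(K)$ with $\mathrm{wr}(K)=0$ one gets $4d_+[J_{K_1}(n)]=8c_+(K)n^2+O(n)=4c(K)n^2+O(n)$, i.e.\ $dj^{+}_{K_1}=2c_+(W_-(D))=4c(K)$, whereas $c(K_1)=4c(K)+2$; the $n^2$-growth of $d_+$ alone certifies only $c_+(D')\geq 2c(K)$ for a diagram $D'$ of $K_1$ and says nothing about $c_-(D')$. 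Carrying your additivity through, the one-sided bound yields only $c(K_1\# K_2)\geq 2c(K)+c_+(K_2)$, roughly half of what is needed. The correct argument must use the full diameter: $dj_{K_1\# K_2}=dj_{K_1}+dj_{K_2}=2\bigl(c(K_1)+c(K_2)-1\bigr)$, and Theorem \ref{main} then gives only $c(K_1\# K_2)\geq c(K_1)+c(K_2)-1$. Recovering the last crossing requires the \emph{strict} inequality $dj_{K_1\#K_2}<2c(K_1\# K_2)$ for non-adequate knots, i.e.\ Corollary \ref{criterion} --- which is why the non-adequacy of $K_1\# K_2$ is not an independent add-on but the indispensable ingredient in the crossing-number computation. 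As written, neither half of your proposal closes.
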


Let us now briefly describe  the contents of the paper and our approach to proving the main theorems. It is known that  the degree of the colored Jones polynomial 
of a knot $K$ satisfies $d[J_K(n)]\leq 2c(D)n^2+O(n)$, for all $n\in {\mathbb N}$ and  any diagram $D=D(K)$ with $c(D)$ crossings.
Theorem \ref{main} will follow from a more general result,  Theorem \ref{t.mm},
stating that if the diagram  $D$ is not adequate then in fact we have $d[J_K(n)]\leq ( 2c(D)-q(D))n^2+O(n)$, where $q(D)$ is a positive constant depending on $D$.
Several terms used in the statement of  Theorem \ref{t.mm}  as well as in this  introduction are also defined in Section 2, where we also show how Theorem \ref{main} follows from
Theorem \ref{t.mm}.

 Sections 3 and 4 are devoted to the proof of Theorem \ref{t.mm}, which relies  on skein theoretic techniques and the fusion theory of the  $SU(2)$-quantum invariants for  knots and trivalent graphs. In Section 3 we include some background and preliminaries from these theories that we will use in the proof of Theorem \ref{t.mm}. For an outline of the proof and the  ideas involved, the reader is referred to the beginning of Section 4.2.

Theorem  \ref{doubleintro}, Corollary \ref{mirror}, and Theorem \ref{sum} are proved in Section 5.
Corollary \ref{criterion} of Theorem \ref{main} gives a criterion for determining the crossing number of a non-adequate knot provided that it admits a diagram whose number of crossings is close enough to the Jones  diameter of the  knot. 
The proof of Theorem \ref{doubleintro} uses this criterion and  a result
of Baker, Motegi and Takata \cite{BMT} that allows us to calculate the Jones diameter of Whitehead doubles.
We expect that Corollary \ref{criterion} will have similar applications for determining the crossing number of more classes of non-adequate knots.

We've made an effort to make the paper self-contained, by including some background and definitions, and by restating results we will use in the form we need them.
\vskip 0.06in

\noindent{ {\bf Acknowledgement.}} Kalfagianni acknowledges  partial research support NSF Grant, DMS-2004155 and Lee acknowledges  partial research support NSF Grant,
DMS-1907010. We thank Ken Baker and Kimihiko Motegi for their interest in this work and for helpful comments and questions on an earlier version of the paper.
We also thank the referee for their careful reading of an earlier version of our  manuscript and for their comments that helped to  improve our exposition and for noticing a slight oversight in the proof of Theorem 2.5.

\section{Degree bounds and Jones slope diameter}
\subsection{Diagrammatic degree bounds}Given a knot diagram $D$, a \emph{Kauffman state} \cite{KauffmanLins} is a choice of either the $A$-resolution or the $B$-resolution for each  crossing of $D$ as shown in Figure \ref{f.kstate}.  Applying a Kauffman state $\sigma$ to a diagram leads to a collection  $\sigma(D)$ of disjoint simple closed curve called \emph{state circles}.
The \emph{all-$A$} state on a knot diagram $D$, denoted by $\sigma_A$, is the state where the $A$-resolution is chosen at every crossing of $D$. Similarly, \emph{the all-$B$ state}, denoted by $\sigma_B$,  is the state where the $B$-resolution is chosen at every crossing of $D$.
\begin{figure}[H]
\def \svgwidth{.3\columnwidth}
\begingroup%
  \makeatletter%
  \providecommand\color[2][]{%
    \errmessage{(Inkscape) Color is used for the text in Inkscape, but the package 'color.sty' is not loaded}%
    \renewcommand\color[2][]{}%
  }%
  \providecommand\transparent[1]{%
    \errmessage{(Inkscape) Transparency is used (non-zero) for the text in Inkscape, but the package 'transparent.sty' is not loaded}%
    \renewcommand\transparent[1]{}%
  }%
  \providecommand\rotatebox[2]{#2}%
  \newcommand*\fsize{\dimexpr\f@size pt\relax}%
  \newcommand*\lineheight[1]{\fontsize{\fsize}{#1\fsize}\selectfont}%
  \ifx\svgwidth\undefined%
    \setlength{\unitlength}{235.17197737bp}%
    \ifx\svgscale\undefined%
      \relax%
    \else%
      \setlength{\unitlength}{\unitlength * \real{\svgscale}}%
    \fi%
  \else%
    \setlength{\unitlength}{\svgwidth}%
  \fi%
  \global\let\svgwidth\undefined%
  \global\let\svgscale\undefined%
  \makeatother%
  \begin{picture}(1,0.71700104)%
    \lineheight{1}%
    \setlength\tabcolsep{0pt}%
    \put(0,0){\includegraphics[width=\unitlength,page=1]{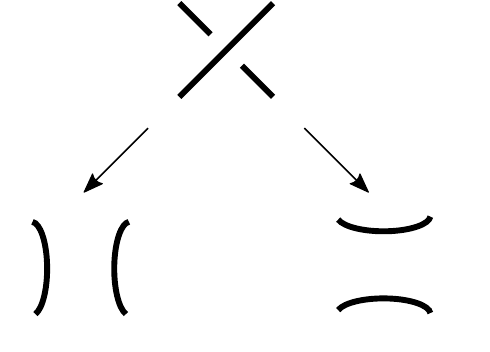}}%
    \put(-0.01024017,-0.01014027){\color[rgb]{0,0,0}\makebox(0,0)[lt]{\lineheight{1.25}\smash{\begin{tabular}[t]{l}$A$-resolution\end{tabular}}}}%
    \put(0.62121308,-0.01014027){\color[rgb]{0,0,0}\makebox(0,0)[lt]{\lineheight{1.25}\smash{\begin{tabular}[t]{l}$B$-resolution\end{tabular}}}}%
    \put(0,0){\includegraphics[width=\unitlength,page=2]{resolution.pdf}}%
  \end{picture}%
\endgroup%

\caption{ \label{f.kstate} The $A$- and $B$-resolution at a crossing. The dashed segments indicate the original location of the crossing. } 
\end{figure}

\begin{defn} \label{d.stategraph}
For a knot diagram $D$, the \textit{Kauffman state graph} $\mathbb{G}_\sigma(D)$ is the graph with vertices the set of state circles from applying $\sigma$ to $D$ and edges the dashed segments recording the original location of the crossing.  We will use  $v_{\sigma}(D)$ to denote the number of vertices of $\mathbb{G}_{\sigma}(D)$.
We write ${\mathbb G}_A$ to denote the state graph of  $\sigma_A$ and $v_A(D)$ its number of vertices.
Similarly, we will denote by ${\mathbb G}_B$ the state graph for $\sigma_B$ and its number of vertices by $v_B(D)$.

We define the following combinatorial quantities: 
\begin{itemize}
\item $c(D)$ is the number of crossings of the knot diagram $D$. 
\item Given an orientation on $D$, $c_+(D)$ and $c_-(D)$ are respectively the number of positive crossings and the number of negative crossings in the knot diagram $D$ with the conventions  specified in Figure \ref{f.pnc}.

\item $c_A(\sigma)$, respectively, $c_B(\sigma)$ is the number of crossings on which the Kauffman state $\sigma$ chooses the $A$-resolution, or respectively the $B$-resolution. 
\item \label{d.stategraphsign} $\sgn(\sigma)=c_A(\sigma) - c_B(\sigma)$ for a Kauffman state $\sigma$. 

\item The \emph{writhe} of a knot diagram $D$, denoted by $\writhe(D)$, is  $c_+(D) - c_-(D)$. 
\item The Turaev genus  $g_T(D)$ of $D$ is defined by $2g_T(D):=2-v_A(D)-v_B(D)+c(D)$ \cite{Turaevgenus, 5author}.
\end{itemize} 
\end{defn} 

\begin{figure}[H]
\def \svgwidth{.2\columnwidth}
\begingroup%
  \makeatletter%
  \providecommand\color[2][]{%
    \errmessage{(Inkscape) Color is used for the text in Inkscape, but the package 'color.sty' is not loaded}%
    \renewcommand\color[2][]{}%
  }%
  \providecommand\transparent[1]{%
    \errmessage{(Inkscape) Transparency is used (non-zero) for the text in Inkscape, but the package 'transparent.sty' is not loaded}%
    \renewcommand\transparent[1]{}%
  }%
  \providecommand\rotatebox[2]{#2}%
  \newcommand*\fsize{\dimexpr\f@size pt\relax}%
  \newcommand*\lineheight[1]{\fontsize{\fsize}{#1\fsize}\selectfont}%
  \ifx\svgwidth\undefined%
    \setlength{\unitlength}{229.15473517bp}%
    \ifx\svgscale\undefined%
      \relax%
    \else%
      \setlength{\unitlength}{\unitlength * \real{\svgscale}}%
    \fi%
  \else%
    \setlength{\unitlength}{\svgwidth}%
  \fi%
  \global\let\svgwidth\undefined%
  \global\let\svgscale\undefined%
  \makeatother%
  \begin{picture}(1,0.42056259)%
    \lineheight{1}%
    \setlength\tabcolsep{0pt}%
    \put(0,0){\includegraphics[width=\unitlength,page=1]{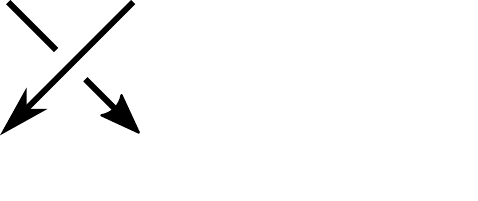}}%
    \put(0.04168163,0.01538434){\makebox(0,0)[lt]{\lineheight{1.25}\smash{\begin{tabular}[t]{l}$+1$\end{tabular}}}}%
    \put(0,0){\includegraphics[width=\unitlength,page=2]{posneg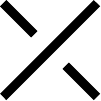}}%
    \put(0.74862816,0.01538434){\makebox(0,0)[lt]{\lineheight{1.25}\smash{\begin{tabular}[t]{l}$-1$\end{tabular}}}}%
  \end{picture}%
\endgroup%

\caption{\label{f.pnc} A positive crossing and a negative crossing.}
\end{figure}  
\begin{defn}{\cite{LickorishThistlethwaite, Lickorishbook}} We say that a knot $K$ is  \emph{$A$-adequate}  if it admits a diagram $D=D(K)$ whose all-$A$ state graph has no one-edged loops. Such a diagram is called  \emph{$A$-adequate}
Similarly, a knot $K$ is \emph{$B$-adequate} if it admits a diagram whose all-$B$ state graph has no one-edged loops. A knot is \emph{adequate} if it admits a diagram $D=D(K)$   that is both $A$- and $B$-adequate. 
\end{defn}

Recall that  given a knot $K$ we use $J_K(n)$ to
denote its $n$-th colored Jones polynomial, which is a Laurent polynomial in a variable $t$.
Also $d_+[J_K(n)]$ and $d_-[J_K(n)]$ denotes the maximal and minimal degree of $J_K(n)$ in $t$, and
$d[J_K(n)]:=4d_+[J_K(n)]-4d_-[J_K(n)].$

 We will need the following well known lemma \cite{Lickorishbook, Indiana}.  

\begin{lem}\label{known} Given a knot diagram $D=D(K)$, for all $n\in \mathbb{N}$, we have the following.
\begin{enumerate}[(a)]
\item $d_+[J_K(n)] \leq \frac{c_+(D) }{2} n^2 + O(n)$  and we have equality if $D$ is $B$-adequate.
\item $d_-[J_K(n)]\geq -\frac{c_-(D)}{2} n^2 + O(n)$ and we have equality if $D$ is $A$-adequate.
\item $d[J_K(n)] \leq 2c(D)n^2 + (2-2g_T(D)  - 2c(D)) n + 2g_T(D) - 2$, and we have equality if $D$ is adequate.
\end{enumerate} 
\end{lem}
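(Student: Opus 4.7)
The plan is to use the standard skein-theoretic expression for the colored Jones polynomial as a normalized Kauffman bracket of the $(n-1)$-cable of $D$ decorated with Jones--Wenzl projectors, and to bound the extremal $t$-degrees via a Kauffman state sum.

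For $n=2$, I would start from the state sum
\[
\langle D\rangle \;=\; \sum_\sigma A^{\sgn(\sigma)}(-A^2-A^{-2})^{v_\sigma(D)-1},
\]
from which the top $A$-degree of $\langle D\rangle$ is at most $c(D)+2(v_A(D)-1)$, attained by $\sigma_A$, and the bottom $A$-degree is at least $-c(D)-2(v_B(D)-1)$, attained by $\sigma_B$. The $A$-adequacy hypothesis means that every state adjacent to $\sigma_A$ (obtained by switching one crossing) has state-circle count equal to $v_A(D)-1$ rather than $v_A(D)+1$, so its contribution has strictly smaller top $A$-degree; hence the leading $\sigma_A$-term cannot be cancelled and the upper bound is sharp. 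An analogous argument using $\sigma_B$ handles the lower bound. Normalizing by the writhe factor $(-A)^{-3\writhe(D)}$ and substituting $A=t^{-1/4}$ gives the $n=2$ case of (a) and (b).

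For general $n$, I would expand the colored bracket by writing each projector $p_{n-1}$ as the identity plus a combination of Temperley--Lieb diagrams containing turnbacks. The identity contribution yields a state sum on the $(n-1)$-cable in which each crossing of $D$ becomes an $(n-1)\times(n-1)$ block of crossings of the same sign; its all-$A$ state has top $A$-degree of order $(n-1)^2 c(D)+O(n)$, which translates via $A=t^{-1/4}$ and the writhe normalization to the $\frac{c_\pm(D)}{2} n^2$ coefficients asserted in (a) and (b). Each turnback term of $p_{n-1}$ strictly lowers the $A$-degree because $p_{n-1}$ is killed by turnbacks; in the adequate case this prevents any cancellation of the leading term, giving equality. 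Part (c) then follows algebraically from (a) and (b) via $d[J_K(n)]=4d_+[J_K(n)]-4d_-[J_K(n)]$, the identity $c(D)=c_+(D)+c_-(D)$, and the Turaev genus formula $2g_T(D)=2-v_A(D)-v_B(D)+c(D)$.

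The main obstacle is bookkeeping the $O(n)$ sub-leading corrections: the cabling and the recursive definition of $p_{n-1}$ produce many linear-in-$n$ terms whose coefficients must combine cleanly with the Turaev genus formula to yield the precise linear-in-$n$ coefficient in (c). These computations are carried out in detail in \cite{Lickorishbook}, and the plan would primarily amount to quoting them and verifying that they collapse to the stated form.
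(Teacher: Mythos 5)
The paper offers no proof of this lemma at all---it is quoted as well known with a citation to \cite{Lickorishbook}---and your plan reconstructs exactly the standard argument from that reference (dominance of the all-$A$/all-$B$ state in the Kauffman state sum, non-cancellation of the extreme term under adequacy, and cabling with Jones--Wenzl projectors), so it is essentially the same approach. One small caveat: your justification that the turnback terms of the projector strictly lower the $A$-degree \emph{because the projector is killed by turnbacks} conflates the algebraic annihilation property with a degree estimate; the correct reason, as in the cited reference and as re-proved in Lemma \ref{l.adequatebound} of this paper, is that each turnback substitution carries a coefficient of degree $-2$ while merging or splitting exactly one pair of state circles, so its contribution to the top degree changes by $-2\pm 2\leq 0$ and the identity term survives uncancelled when the cabled diagram is adequate.
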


The Turaev genus of a knot $K$, denoted by $g_T(K)$, is defined to be the minimum $g_T(D)$  over all diagrams representing $K$.  It is known  \cite{Abe} that if $D$ is an adequate diagram
of a knot  $K$ then $g_T(K)=g_T(D)$.

 We have the following theorem which shows that   the diagrammatic bounds on  the degrees of the colored Jones polynomials given
 in Lemma \ref{known} are never achieved for non-adequate knot diagrams.

\begin{thm} \label{t.mm} Given $D=D(K)$  any diagram of a knot $K \subset S^3$ we have the following.
 \begin{enumerate}[(a)]
  \item \label{t.mma}  If $D$ is not $B$-adequate, then there is a constant $p_+(D)>0$ depending on $D$, such that
  $ d_+[J_K(n)]  \leq (\frac{c_+(D)}{2} -p_+(D)) n^2 + O(n) $, for all $n\in \mathbb{N}$.
 \item \label{t.mmb} If $D$ is not  $A$-adequate, then  there is a constant $p_-(D)>0$ depending on $D$, such that
 $ d_-[J_K(n)]  \geq (\frac{-c_-(D)}{2} +p_-(D)) n^2 + O(n)$,  for all $n\in \mathbb{N}$.
\item \label{t.mmc} If $D$ is not adequate, then there is a constant $p(D)>0$ depending on $D$, such that
 $ d[J_K(n)] \leq (2c(D)-4p(D))n^2 + O(n)  $, for all $n\in \mathbb{N}$. 
 \end{enumerate} 
\end{thm}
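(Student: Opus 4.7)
The plan is to establish part (\ref{t.mma}) in detail, then derive parts (\ref{t.mmb}) and (\ref{t.mmc}) from it by standard reductions. For (\ref{t.mmb}) I apply (\ref{t.mma}) to the mirror diagram $D^{*}$, using that $d_{-}[J_{K}(n)] = -d_{+}[J_{K^{*}}(n)]$ (after the standard $t \mapsto t^{-1}$ convention), that mirroring swaps $A$- and $B$-resolutions and so $A$-adequacy of $D$ is equivalent to $B$-adequacy of $D^{*}$, and that $c_{-}(D) = c_{+}(D^{*})$. For (\ref{t.mmc}), since $d[J_{K}(n)] = 4\,d_{+}[J_{K}(n)] - 4\,d_{-}[J_{K}(n)]$, summing the inequalities from (\ref{t.mma}) and (\ref{t.mmb}) yields the claim with $p(D) := p_{+}(D) + p_{-}(D) > 0$: if $D$ is not adequate then at least one of $A$- or $B$-adequacy fails, and by Lemma \ref{known} the remaining inequality holds with the corresponding $p$ equal to zero.

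For (\ref{t.mma}), the approach is skein-theoretic. I would express $J_{K}(n)$ as the normalized Kauffman bracket of the $(n-1)$-cable $D^{(n-1)}$ of $D$ with a Jones--Wenzl idempotent $f^{(n-1)}$ inserted on each component, and expand at every original crossing using the Kauffman bracket skein relation. Among the $2^{c(D)}$ resolution choices, the dominant contribution to the leading quadratic term of $d_{+}$ comes from the all-$A$ state applied uniformly on each cabled crossing, and for a $B$-adequate diagram this contribution attains $\tfrac{c_{+}(D)}{2}\, n^{2}$; this is essentially how Lemma \ref{known}(a) is established.

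The core step, assuming $D$ fails $B$-adequacy, is to locate a crossing at which $\mathbb{G}_{A}$ has a one-edged loop and show that the all-$A$ resolution of its cabled neighborhood inserts, inside the Jones--Wenzl projector $f^{(n-1)}$ living on the incident state circle, a bundle of parallel strands that caps back onto itself. The standard turnback identity $f^{(n-1)}\cdot (\text{cap}) = 0$ then forces the naive top-degree term to cancel, so the actual highest-degree contribution must come from sub-leading diagrams in the fusion expansion. Using the trivalent-graph degree estimates reviewed in Section~3, each sub-leading diagram carries a penalty of order $n^{2}$ in the $t$-degree; taking $p_{+}(D)$ to be the minimum such penalty, bounded below by a positive constant depending only on the local combinatorics of $\mathbb{G}_{A}$ near the one-edged loop, produces the desired strict inequality.

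The hard part will be the quantitative control: I must verify that the various sub-leading fusion terms do not conspire to reinstate the killed top-degree contribution, and pin down an explicit $p_{+}(D) > 0$. This is where the $SU(2)$-fusion calculus for knotted trivalent graphs and the explicit expansion of $f^{(n-1)}$ adjacent to a turnback become essential, and where the proof interacts most delicately with the structure of $\mathbb{G}_{A}$. Modulo this control, the logic is clean: a one-edged loop in $\mathbb{G}_{A}$ produces a turnback against $f^{(n-1)}$ in the would-be top state, which kills the naive leading term and drops the quadratic coefficient of $d_{+}[J_{K}(n)]$ by a definite positive amount $p_{+}(D)$.
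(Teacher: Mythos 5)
Your reductions are sound and essentially the paper's own: the paper proves the $d_-$/non-$A$-adequate case directly (its Theorem \ref{t.main}) and obtains the $d_+$ case by mirroring exactly as you describe, and part (\ref{t.mmc}) follows by summing, with your remark about Lemma \ref{known} correctly covering the case where only one of the two adequacy conditions fails. The setup of your main argument is also the paper's: single out the crossing $x$ whose $A$-resolution gives a one-edged loop of $\mathbb{G}_A$, fuse there, and use the turnback identity against the Jones--Wenzl projector (Lemma \ref{l.dskein}) to kill the would-be top term.

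The gap is in the quantitative step you defer, and the one quantitative assertion you make in its place is false. After fusing at $x$, the expansion of $\langle \Dj^n\rangle$ is indexed by pairs $(a,\sigma)$, where $a$ is the fusion label and $\sigma$ is a Kauffman state on the remaining crossings; the coefficient $I(a,-1,n)$ has degree $-4n+2a-n^2+\tfrac{a^2}{2}$, so the term with $a=2n-2j$ loses only $O(jn)$ relative to $a=2n$ --- the claim that ``each sub-leading diagram carries a penalty of order $n^2$'' is not true term by term, and no purely local estimate near the one-edged loop can be, since a state that routes $2n$ through strands past $x$ pays no fusion-coefficient penalty at all. The actual mechanism is a global trade-off: Lemma \ref{l.dskein} forces $\langle\sk_\sigma(a)\rangle=0$ unless $a\le 2k_\sigma$, where $2k_\sigma$ is the number of through strands of the complementary tangle under $\sigma$, so a large fusion label must be bought with through strands; each through strand traces a closed walk on $\mathbb{G}_A$ based at the loop's vertex, and Lemma \ref{l.xoriented} shows that sending $2k_y$ through strands across a crossing $y$ costs at least $k_y^2$ $B$-resolutions. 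One must then optimize over all distributions of the $2k_\sigma$ through strands among the finitely many admissible walks (the paper does this via the minimal-partition Lemmas \ref{elementarym} and \ref{l.partition}) and verify that the total loss $2(n^2-k_{\sigma}^2)+4(n-k_{\sigma})+2\sum_j\bigl((k'_j)^2-k'_j\bigr)$ is bounded below by $\rho n^2+O(n)$ uniformly over $k_\sigma\in[0,n]$, which yields $\rho=2/s$ with $s$ the number of relevant walks. This state-by-state optimization is the substance of the theorem, and your proposal does not supply it.
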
 

The proof of Theorem \ref{t.mm} occupies the next two sections and, as we discuss next,  Theorem \ref{main} follows from it.

\subsection{Knots with maximal diameter}
Garoufalidis \cite{Garslopes} showed that for every knot $K$ there is a number $n_K > 0$ such that for $n>n_K$, the degrees $d_{\pm}[J_K(n)]$ are quadratic quasi-polynomials.
 That is, we have
\[4d_+[J_K(n)]  = s_2(n)n^2 + s_1(n)n + s_0(n) \ {\rm and }\ 4d_-[J_K(n)] = s^*_2(n)n^2 + s^*_1(n)n + s^*_0(n),  \] where for $0\leq i \leq 2$, $s_i, s^*_i:  \mathbb{N} \rightarrow \mathbb{Q}$ are periodic functions with integral period.

The elements of the sets 
$$js_K:= \left\{ s_2(n) \mid n>n_K\right\}\quad
 \mbox{and} \quad js^*_K:= \left\{ s^*_2(n)\mid  n>n_K \right\}$$
 are called {\em Jones slopes} of $K$. 
 Define the  {\em Jones slope diameter}  of $K$ by
 $$jd_K:= {\rm max}\{ |s_2(n)-s^*_2(n)| \mid n>n_K \ \}.$$ 

Let $c_+(K)$ (resp. $c_-(K)$) denote the minimum of $c_+(D)$ (resp. $c_-(D)$) over all knot diagrams of $K$.
Next we use Theorem \ref{t.mm} to derive the following result which in particular gives Theorem \ref{main} stated in the Introduction. 

\begin{thm}\label{slopes}Given a knot $K$ we have the following.
\begin{enumerate}[(a)]
\item $K$ is $B$-adequate  if and only if $2c_+(K)\in js_K$.
\item  $K$ is $A$-adequate  if and only if $-2c_-(K)\in js^{*}_K$.
\item $K$ is adequate if and only if  $jd_K = 2c(K).$
\end{enumerate}
\end{thm}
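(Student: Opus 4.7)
The plan is to derive all three parts from Lemma \ref{known} and Theorem \ref{t.mm} by tracking the leading quadratic terms $s_2(n)$ and $s_2^*(n)$ of $4d_+[J_K(n)]$ and $4d_-[J_K(n)]$; since both are eventually periodic, $js_K$ and $js^*_K$ are exactly the sets of values attained infinitely often by $s_2$ and $s_2^*$. Throughout I set $c_\pm(K):=\min\{c_\pm(D) : D\text{ a diagram of }K\}$, so that the per-diagram estimates in Lemma \ref{known}(a)(b) become $s_2(n)\leq 2c_+(D)$ and $s_2^*(n)\geq -2c_-(D)$ for every diagram $D$ and every sufficiently large $n$, and in particular $\max js_K\leq 2c_+(K)$ and $\min js^*_K\geq -2c_-(K)$.

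For part (a) the forward direction is immediate: if $D$ is a $B$-adequate diagram of $K$, then Lemma \ref{known}(a) is an equality, so $s_2(n)=2c_+(D)$ for all large $n$; combined with the universal upper bound $s_2(n)\leq 2c_+(D')$ for every other diagram $D'$, this forces $c_+(D)=c_+(K)$, and hence $2c_+(K)\in js_K$. For the converse, suppose $2c_+(K)\in js_K$ but $K$ is not $B$-adequate. Then every diagram of $K$ is non-$B$-adequate, so applying Theorem \ref{t.mm}(\ref{t.mma}) to a diagram $D$ realizing $c_+(D)=c_+(K)$ yields $s_2(n)\leq 2c_+(K)-4p_+(D)<2c_+(K)$ for all large $n$, contradicting $2c_+(K)$ being a cluster point. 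Part (b) is the symmetric statement, proved identically from Lemma \ref{known}(b) and Theorem \ref{t.mm}(\ref{t.mmb}).

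For part (c) the identity $d[J_K(n)]=4d_+[J_K(n)]-4d_-[J_K(n)]$ shows its quadratic coefficient is $s_2(n)-s_2^*(n)$; the inequality $d_+\geq d_-$ gives $s_2\geq s_2^*$ pointwise, which after a short bookkeeping lets one drop the absolute value in the definition of $dj_K$ and write $dj_K=\max js_K-\min js^*_K$. If $K$ is adequate, pick an adequate diagram $D$; by the Tait-type theorem for adequate knots cited in the introduction, $c(D)=c(K)$, and Lemma \ref{known}(c) gives $dj_K=2c(D)=2c(K)$. Conversely, assume $dj_K=2c(K)$, choose $s_2\in js_K$ and $s_2^*\in js^*_K$ with $s_2-s_2^*=2c(K)$, and let $D_0$ be a minimum-crossing diagram of $K$. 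The per-diagram bounds yield
\[
2c(K)=s_2-s_2^*\leq 2c_+(D_0)+2c_-(D_0)=2c(D_0)=2c(K),
\]
so equality holds throughout and $s_2=2c_+(D_0)$, $s_2^*=-2c_-(D_0)$. Then Theorem \ref{t.mm}(\ref{t.mma}) rules out $D_0$ being non-$B$-adequate and Theorem \ref{t.mm}(\ref{t.mmb}) rules out $D_0$ being non-$A$-adequate, so $D_0$ is adequate and hence so is $K$.

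The main obstacle is organizational rather than technical: one must arrange that the cluster-point value ($2c_+(K)$ in part (a), and the pair $2c_+(D_0),-2c_-(D_0)$ in part (c)) is attained by the \emph{same} diagram to which Theorem \ref{t.mm} is applied, so that the universal bound and the strict improvement collide on a common value of $c_+(D)$ or $c(D)$; once the minimum-$c_+$ or minimum-crossing diagram is singled out, the collision is automatic.
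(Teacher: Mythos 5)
Your proposal is correct and follows essentially the same route as the paper: both directions come from Lemma \ref{known} together with Theorem \ref{t.mm} applied to a diagram realizing the relevant minimal crossing count. The only (minor, and arguably cleaner) deviation is in part (c), where you bound $s_2$ and $s_2^*$ separately by the per-diagram estimates to force $s_2=2c_+(D_0)$ and $s_2^*=-2c_-(D_0)$ and then invoke parts (\ref{t.mma}) and (\ref{t.mmb}) of Theorem \ref{t.mm}, whereas the paper aligns the two quasi-polynomial periods along a common subsequence and applies Theorem \ref{t.mm}(c) directly; the two arguments carry the same content.
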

\begin{proof} One direction  of all three statements above is given in Lemma \ref{known}.
To deduce the other direction,
recall that for $n >  n_K$, $4d_+[J_K(n)], 4d_-[J_K(n)]$  become quadratic quasi-polynomials and let $s_2(n), s_2^{*}(n)$ denote their quadratic coefficients respectively \cite{Garslopes}. 
Recall that $s_2(n), s_2^{*}(n): \mathbb{N} \rightarrow \mathbb{Q}$ are periodic functions with integral period. 

If $2c_+(K)\in js_K$, then there is an infinite sequence $\{n_i>n_K\}_i\subset \mathbb{N}$ such that  $s_2(n_i)=2c_+(K)$.  Then Theorem \ref{t.mm} (a) implies that the diagram $D=D(K)$ that satisfies $c_+(K) = c_+(D)$ is 
$B$-adequate. Similarly, if $-2c_-(K)\in js^{*}_K$ then  by Theorem \ref{t.mm} (b) the diagram $D'=D'(K)$ that satisfies $c_-(K)=c_-(D')$ is 
$A$-adequate.

Suppose now that we have $jd_K = 2c(K).$ Then, there are Jones slopes $s_2\in js_K$ and $ s_2^*\in js^*_K$ so that  $|s_2-s_2^{*}|=2c(K)$. By the definitions,  there is an infinite sequence  $\{n_i>n_K\}_i$ so that
 $s_2(n_i)=s_2$  and $s_2^{*}(n_i)=s_2^{*}$ for every $n_i$.   Now let $D=D(K)$ be  a diagram that realizes $c(K)$ and let $c_-(D)$, $c_+(D)$ be the number of negative and positive crossings in $D$. We have $c(K)=c_-(D)+c_+(D)$ and
 $$4d_+[J_K(n_i)]-4d_-[J_K(n_i)]=(s_2-s_2^{*}) n_i^2+O(n_i)=2c(K)n_i^2+O(n_i),$$ for the infinite sequence $\{n_i>n_K\}_i$. Now Theorem \ref{t.mm} (c) implies that $D$ is adequate.
\end{proof}

\begin{remark} \label{r.throughe} The definition of Jones diameter used in \cite{Garslopes} is slightly different than the one
used in this paper. In  \cite{Garslopes}  the Jones diameter is defined to be the quantity
 $$ {\rm max}\{ |s_2-s^*_2| \mid s_2 \in js_K, s_2^* \in js^*_K\}.$$ 
Currently there are no examples of knots known for which the functions $s_2, s^*_2:  \mathbb{N} \rightarrow \mathbb{Q}$ ($n>n_K$) are not constant. Thus in all the cases where the Jones slopes have been computed the two definitions agree.
\end{remark}

\section{Fusion theory preliminaries and Tools} 
In this section, first, we recall some background from the skein and fusion theory of
the colored Jones polynomial, and restate some results from the literature that we will use in the proof of 
Theorem \ref{t.mm}.

\subsection{Kauffman brackets and skein theory}
\begin{defn} \emph{The Kauffman bracket skein module} $K(F)$ \cite{Przytycki}, \cite{KauffmanLins} of a compact, oriented surface $F$ with (possibly empty) boundary is the formal vector space over $\mathbb{C}(A)$ of properly embedded tangle or knot diagrams in $F$ (including the empty knot), considered up to isotopy fixing the boundary points, and modulo the \emph{Kauffman bracket skein relations}: 
\begin{itemize}
\item $  \vcenter{\hbox{\includegraphics[scale=.25]{crossing.pdf}}}  = A^{-1} \ \vcenter{\hbox{\includegraphics[scale=.25]{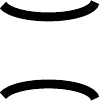}}}  + A \ \vcenter{\hbox{\includegraphics[scale=.25]{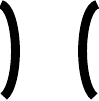}}} $
\item $  \vcenter{\hbox{\includegraphics[scale=.15]{circle.pdf}}} \sqcup D  = (-A^{-2}-{A^2})  D. $  
\end{itemize} 
An element of $K(F)$ is called a \emph{skein element}. Using the Kauffman bracket skein relations,  any skein element in $K(S^2)$ can be reduced to an empty diagram with coefficient a rational function in $\mathbb{C}(A)$. 
\begin{defn} Define the \emph{Kauffman bracket}   of the empty knot to be 1.
For a skein element  $D\in K(S^2)$,  the Kauffman bracket $\langle D \rangle$ is the rational function in $\mathbb{C}(A)$.
\end{defn} 

\end{defn} 
The \emph{Temperley-Lieb algebra} $TL_n$ is a specialization of the  Kauffman bracket skein module of the 2-disk $\dd^2$ viewed as a rectangle with $n$ marked points above and below. As a module, $TL_n$ is the vector space over $\mathbb{C}(A)$ of properly-embedded tangle or knot diagrams in $\dd^2$ such that the endpoints of a tangle are in 1-1 correspondence with the $2n$ marked points on $\partial(\dd^2)$, modulo the Kauffman bracket skein relations. 

The module $TL_n$ forms an algebra with the multiplication operation induced by stacking one disk on top of another, identifying the $n$ marked points on the bottom of the first disk with the $n$ marked points on top of the second disk, forming a new disk. The identity of the multiplication operation is the identity element $1_n$ of $n$ parallel arcs.  For two skein elements $\mathcal{U}, \mathcal{V}\in TL_n$, we will denote the result of the multiplication operation by $\mathcal{U}\cdot \mathcal{V}$. Every skein element in $TL_n$ can be written as a sum of products of elementary generators $e^n_1, \ldots, e^n_{n-1}$ by the Kauffman bracket skein relations.
\begin{figure}[H]
\def \svgwidth{.15\columnwidth}
\begingroup%
  \makeatletter%
  \providecommand\color[2][]{%
    \errmessage{(Inkscape) Color is used for the text in Inkscape, but the package 'color.sty' is not loaded}%
    \renewcommand\color[2][]{}%
  }%
  \providecommand\transparent[1]{%
    \errmessage{(Inkscape) Transparency is used (non-zero) for the text in Inkscape, but the package 'transparent.sty' is not loaded}%
    \renewcommand\transparent[1]{}%
  }%
  \providecommand\rotatebox[2]{#2}%
  \newcommand*\fsize{\dimexpr\f@size pt\relax}%
  \newcommand*\lineheight[1]{\fontsize{\fsize}{#1\fsize}\selectfont}%
  \ifx\svgwidth\undefined%
    \setlength{\unitlength}{229.33477687bp}%
    \ifx\svgscale\undefined%
      \relax%
    \else%
      \setlength{\unitlength}{\unitlength * \real{\svgscale}}%
    \fi%
  \else%
    \setlength{\unitlength}{\svgwidth}%
  \fi%
  \global\let\svgwidth\undefined%
  \global\let\svgscale\undefined%
  \makeatother%
  \begin{picture}(1,0.63935368)%
    \lineheight{1}%
    \setlength\tabcolsep{0pt}%
    \put(0,0){\includegraphics[width=\unitlength,page=1]{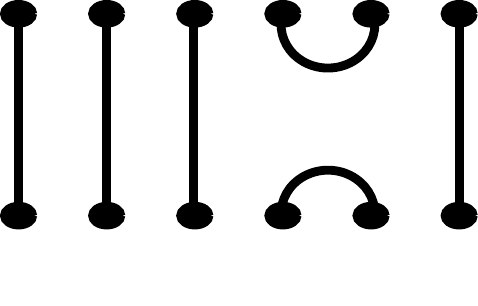}}%
    \put(0.51635129,0.00922335){\color[rgb]{0,0,0}\makebox(0,0)[lt]{\lineheight{1.25}\smash{\begin{tabular}[t]{l}$i$\end{tabular}}}}%
    \put(0.7256525,0.00922335){\color[rgb]{0,0,0}\makebox(0,0)[lt]{\lineheight{1.25}\smash{\begin{tabular}[t]{l}$i+1$\end{tabular}}}}%
  \end{picture}%
\endgroup%

\caption{An elementary generator $e^n_i$ where $n = 6$ and $i=4$. }
\end{figure} 
We will not explicitly mark the $n$ points on the boundary of the disk for $TL_n$ from this point on. 

\begin{defn} \label{d.jw} \cite{Wenzl}, \cite[Lemma 13.2]{Lickorishbook}  The \textit{Jones-Wenzl projector} in $TL_n$, denoted by $\jw_n$, is a unique element in $TL_n$ characterized by the following properties: 
\begin{enumerate}[(a)]
\item $\jw_n \cdot e^n_i= 0 = e^n_i \cdot \jw_n$ for $1\leq i \leq n-1$. 
\item $\jw_n - 1$ belongs to the algebra generated by $\{e^n_1, e^n_2, \ldots, e^n_{n-1} \}$. 
\item $\jw_n \cdot  \jw_n  = \jw_n $ 
\item \label{d.delta} Let $\cirj_n$ be the skein element in $K(S^2)$ obtained by embedding the disk $\dd^2$ containing $\jw_n$ in the standard way into $S^2$, and then joining the top $n$ points of $\dd^2$ to the bottom $n$ points by $n$ parallel arcs in the projection plane. The Kauffman bracket of $\cirj_n$ is given by 
$\langle \cirj_n  \rangle = (-1)^n \frac{A^{2(n+1)} - A^{-2(n+1)}}{A^2-A^{-2}}.$

To simplify notation we will denote  $\triangle_n:=\langle \cirj_n  \rangle$.
\end{enumerate} 
We will depict a skein element in $TL_n$ containing Jones-Wenzl projectors by drawing rectangular boxes and say that the skein element is \textit{decorated }by a Jones-Wenzl projector if it contains a Jones-Wenzl projector. 
\end{defn} 

\subsection{Fusion and untwisting formulas}
 For a skein element in $K(S^2)$, we shall take for granted the fusion and untwisting formulas from \cite{MasbaumVogel}, depicted in Figure \ref{f.fusion}. As in \cite{MasbaumVogel}, a trivalent graph colored with nonnegative integers represents a skein element containing Jones-Wenzl projectors by making the local replacement shown below in Figure \ref{f.trigraph} at a vertex of the trivalent graph. 
 \begin{figure}[H]
 \begin{center} 
 \def \svgwidth{.24\columnwidth}
\begingroup%
  \makeatletter%
  \providecommand\color[2][]{%
    \errmessage{(Inkscape) Color is used for the text in Inkscape, but the package 'color.sty' is not loaded}%
    \renewcommand\color[2][]{}%
  }%
  \providecommand\transparent[1]{%
    \errmessage{(Inkscape) Transparency is used (non-zero) for the text in Inkscape, but the package 'transparent.sty' is not loaded}%
    \renewcommand\transparent[1]{}%
  }%
  \providecommand\rotatebox[2]{#2}%
  \newcommand*\fsize{\dimexpr\f@size pt\relax}%
  \newcommand*\lineheight[1]{\fontsize{\fsize}{#1\fsize}\selectfont}%
  \ifx\svgwidth\undefined%
    \setlength{\unitlength}{345.3911897bp}%
    \ifx\svgscale\undefined%
      \relax%
    \else%
      \setlength{\unitlength}{\unitlength * \real{\svgscale}}%
    \fi%
  \else%
    \setlength{\unitlength}{\svgwidth}%
  \fi%
  \global\let\svgwidth\undefined%
  \global\let\svgscale\undefined%
  \makeatother%
  \begin{picture}(1,0.486562)%
    \lineheight{1}%
    \setlength\tabcolsep{0pt}%
    \put(0,0){\includegraphics[width=\unitlength,page=1]{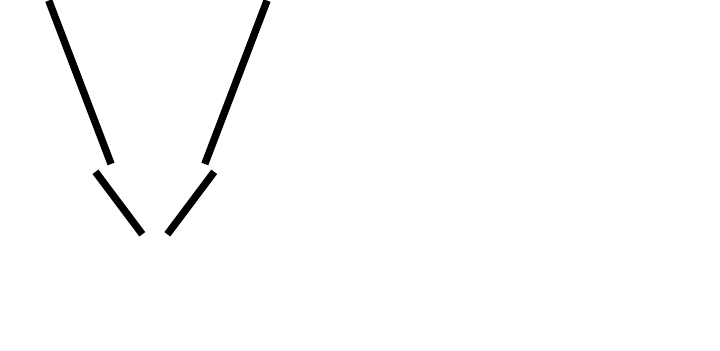}}%
    \put(-0.00631564,0.44349895){\color[rgb]{0,0,0}\makebox(0,0)[lt]{\lineheight{0}\smash{\begin{tabular}[t]{l}$a$\end{tabular}}}}%
    \put(0.3834802,0.44349895){\color[rgb]{0,0,0}\makebox(0,0)[lt]{\lineheight{0}\smash{\begin{tabular}[t]{l}$b$\end{tabular}}}}%
    \put(0,0){\includegraphics[width=\unitlength,page=2]{trigraph.pdf}}%
    \put(0.2545659,0.0431283){\color[rgb]{0,0,0}\makebox(0,0)[lt]{\lineheight{0}\smash{\begin{tabular}[t]{l}$c$\end{tabular}}}}%
    \put(0,0){\includegraphics[width=\unitlength,page=3]{trigraph.pdf}}%
    \put(0.47611425,0.1992883){\makebox(0,0)[lt]{\lineheight{1.25}\smash{\begin{tabular}[t]{l}$=$\end{tabular}}}}%
    \put(0.95239979,0.44349895){\color[rgb]{0,0,0}\makebox(0,0)[lt]{\lineheight{0}\smash{\begin{tabular}[t]{l}$b$\end{tabular}}}}%
    \put(0.82348575,0.05615704){\color[rgb]{0,0,0}\makebox(0,0)[lt]{\lineheight{0}\smash{\begin{tabular}[t]{l}$c$\end{tabular}}}}%
    \put(0.60169015,0.44349895){\color[rgb]{0,0,0}\makebox(0,0)[lt]{\lineheight{0}\smash{\begin{tabular}[t]{l}$a$\end{tabular}}}}%
  \end{picture}%
\endgroup%

 \end{center} 
 \caption{\label{f.trigraph} A skein element (left) is recovered from a colored trivalent graph (right) by replacing a vertex locally by the skein element in $K(\dd^2)$ with $a+b$ marked points above and $c$ points below.}
 \end{figure} 
 
\begin{figure}[H]
\def \svgwidth{0.95\columnwidth}
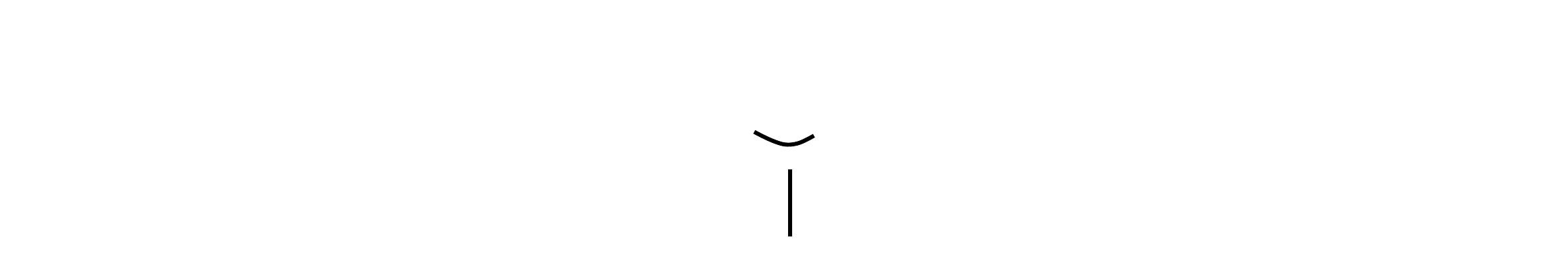 
\caption{\label{f.fusion} The fusion (left) and untwisting (right) formulas.}
\end{figure} 

We call a triple of nonnegative integers $(a, b, c)$ \emph{admissible} if $a+b+c$ is even, $a\leq b+c$, $b\leq a+c$, and $c\leq a+b$.
Given an admissible triple, the rational function $\theta(a, b, c)$ is defined as the Kauffman bracket of a particular skein element as shown in Figure \ref{Theta} below:
\begin{figure}[H]
\def\svgwidth{.25\columnwidth}
\[\theta(a, b, c) = \left \langle\ \vcenter{\hbox{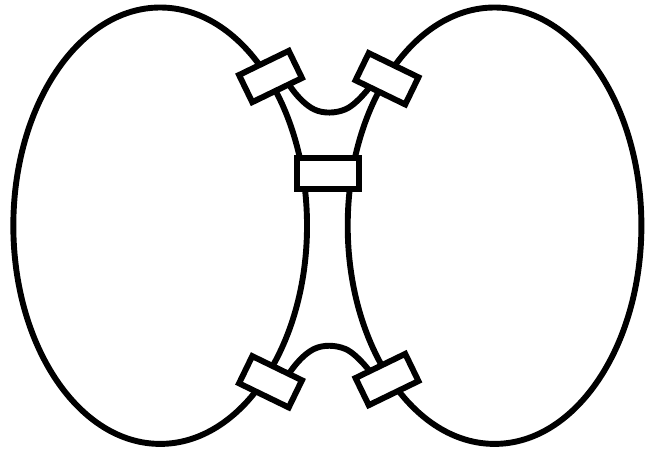}} \  \right \rangle \]
\caption{We have $x = \frac{a+c-b}{2}, y=\frac{b+c-a}{2}, z=\frac{a+b-c}{2}$.}\label{Theta}
\end{figure} 

\begin{lem}[{\cite[Lemma 14.5]{Lickorishbook}}] Suppose the triple of nonnegative integers  $(a, b, c)$ is admissible. Then, we have 
\begin{equation}\label{e.theta} 
\theta(a, b, c) = \frac{\triangle_{x+y+z}! \triangle_{x-1}! \triangle_{y-1}! \triangle_{z-1}!}{\triangle_{y+z-1}! \triangle_{z+x-1}! \triangle_{x+y-1}!}, 
 \end{equation} 
 where $\triangle_n!:=\triangle_n \triangle_{n-1} \triangle_{n-2} \cdots \triangle_{1}$ and $\triangle_{-1} = \triangle_{0} := 1$.
 \end{lem}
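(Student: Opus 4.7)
The plan is to prove the formula by induction, exploiting the symmetry of the theta graph to reduce to a single inductive parameter. Viewed as a skein element in $K(S^2)$, the configuration in Figure~\ref{Theta} enjoys an $S_3$ symmetry cyclically permuting the three edges labeled $a$, $b$, $c$, so $\theta(a,b,c)$ is symmetric in its three arguments, matching the manifest symmetry of the right-hand side of \eqref{e.theta} in $(x,y,z)$. It therefore suffices to induct on one parameter, say $c$, with base case $\min(x,y,z)=0$; the other corner cases follow by this symmetry.

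For the base case, assume $z=0$, so $c=a+b$. Applying the local replacement of Figure~\ref{f.trigraph} at each trivalent vertex, no strand turns through the vertex (the triangle at each vertex has a side of length $z=0$), and the $a$- and $b$-strand groups pair off directly with the first $a$ and last $b$ strands of the $\jw_c$ projector on the middle edge. The resulting closed skein element is isotopic to $\cirj_{a+b}$, whose Kauffman bracket is $\triangle_{a+b}$ by Definition~\ref{d.jw}(d). Substituting $z=0$, $x=a$, $y=b$ into the right-hand side of \eqref{e.theta}, and using the convention $\triangle_{-1}!=1$, the product telescopes to $\triangle_{a+b}!/\triangle_{a+b-1}! = \triangle_{a+b}$, which matches.

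For the inductive step I would apply Wenzl's recursion
\[
  \jw_c \;=\; \jw_{c-1}\otimes \mathbf{1} \;-\; \frac{\triangle_{c-2}}{\triangle_{c-1}}\,(\jw_{c-1}\otimes \mathbf{1})\cdot e^{c}_{c-1}\cdot (\jw_{c-1}\otimes \mathbf{1})
\]
to the Jones-Wenzl projector on the middle $c$-colored edge of the theta graph. The first summand produces a theta graph with middle color $c-1$ together with a free extra strand, which is absorbed by the adjacent $\jw_a$ or $\jw_b$ via the identity $\jw_n\cdot(\jw_{n-1}\otimes\mathbf{1}) = \jw_n$. The second summand introduces a cap-cup in the $c$-bundle; sliding the cap through a neighboring trivalent vertex, again using the idempotency of $\jw_a$ and $\jw_b$, reduces it to a theta graph with parameters decremented on two of the three edges. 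The main obstacle is then purely combinatorial: one must verify that the resulting linear recursion among $\theta$-values matches the recursion satisfied by the right-hand side of \eqref{e.theta} when $c$ is incremented by one. This reduces to a direct algebraic identity in $A$ among $\triangle_n$-ratios, verifiable from the explicit formula $\triangle_n=(-1)^n(A^{2(n+1)}-A^{-2(n+1)})/(A^2-A^{-2})$ in Definition~\ref{d.jw}(d). With the two recursions matched, induction on $c$ closes the proof.
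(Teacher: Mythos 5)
The paper does not prove this lemma at all; it is quoted from \cite[Lemma 14.5]{Lickorishbook}, so your sketch can only be judged on its own terms rather than against an argument in the text. The symmetry reduction and the base case are fine: permuting $(a,b,c)$ permutes $(x,y,z)$, the right-hand side of \eqref{e.theta} is symmetric in $(x,y,z)$, and when $z=0$ the projectors of colors $a$ and $b$ are absorbed into the $c=a+b$ projector, leaving a closed loop that evaluates to $\triangle_{a+b}$, matching the telescoped right-hand side.

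The inductive step is where the genuine gap lies, and as set up it cannot work. Admissibility forces $a+b+c$ to be even, so $(a,b,c-1)$ is never admissible; consequently the first summand of Wenzl's recursion applied to the $c$-colored edge does not produce ``a theta graph with middle color $c-1$.'' The extra identity strand runs from one trivalent vertex to the other alongside the $(c-1)$-bundle, and it cannot be ``absorbed by the adjacent'' projector of color $a$ or $b$: absorption applies to strands entering the same projector on one edge, whereas here the free strand would have to migrate to a different edge, changing that edge's color and yielding a configuration that is not a theta graph of any admissible triple. The recursion that actually closes decrements one of $x,y,z$, i.e.\ two of the three colors at once, for instance $\theta(a,b,c)=\frac{\triangle_{x+y+z}\,\triangle_{z-1}}{\triangle_{x+z-1}\,\triangle_{y+z-1}}\,\theta(a-1,b-1,c)$, which telescopes from the $z=0$ base case directly to \eqref{e.theta}. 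Deriving that one-step identity from the skein relations is the entire content of the lemma, and your proposal defers precisely this to ``a direct algebraic identity \dots verifiable from the explicit formula,'' which is an assertion rather than a verification. The outline is salvageable --- induct on $z$ rather than $c$, derive the displayed one-step recursion by an explicit local computation at a single vertex, and telescope --- but as written the proof is incomplete at its central step.
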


\subsection{A definition of the colored Jones polynomial}
Given a knot diagram $D \subset S^2$ with $\ell$ components, let $\ac = \cup_{i=1}^\ell \ac_i$ be the collection of annuli in $S^2$ containing $D$ with boundaries the 2-blackboard cable of the diagram $D$. The natural inclusion map $\iota: \ac \hookrightarrow S^2$ induces a map $\iota^*: K(\ac) \hookrightarrow K(S^2)$. 

Define the skein element $\Dj^n$ as the image of the element $(\cirj_n,\cirj_n, \ldots, \cirj_n )$ in $K(\ac_1) \times K(\ac_2) \times \cdots \times K(\ac_\ell)$  under the map $\iota^*$. That is,
\[\Dj^n: = \iota^*(\cirj_n,\cirj_n, \ldots, \cirj_n ).  \]

For depiction of skein elements  we follow the convention of \cite{MasbaumVogel, Lickorishbook} where a label $n$
next to a strand indicates number of parallel strands. We  use boldface for $\Dj^n$ to denote 
$n$-blackboard cable $D^n$ of $D$, decorated by Jones-Wenzl projectors as defined above.
\begin{figure}[H] 
\def\svgwidth{.6\columnwidth}
\begingroup%
  \makeatletter%
  \providecommand\color[2][]{%
    \errmessage{(Inkscape) Color is used for the text in Inkscape, but the package 'color.sty' is not loaded}%
    \renewcommand\color[2][]{}%
  }%
  \providecommand\transparent[1]{%
    \errmessage{(Inkscape) Transparency is used (non-zero) for the text in Inkscape, but the package 'transparent.sty' is not loaded}%
    \renewcommand\transparent[1]{}%
  }%
  \providecommand\rotatebox[2]{#2}%
  \newcommand*\fsize{\dimexpr\f@size pt\relax}%
  \newcommand*\lineheight[1]{\fontsize{\fsize}{#1\fsize}\selectfont}%
  \ifx\svgwidth\undefined%
    \setlength{\unitlength}{520.36725863bp}%
    \ifx\svgscale\undefined%
      \relax%
    \else%
      \setlength{\unitlength}{\unitlength * \real{\svgscale}}%
    \fi%
  \else%
    \setlength{\unitlength}{\svgwidth}%
  \fi%
  \global\let\svgwidth\undefined%
  \global\let\svgscale\undefined%
  \makeatother%
  \begin{picture}(1,0.32269544)%
    \lineheight{1}%
    \setlength\tabcolsep{0pt}%
    \put(0,0){\includegraphics[width=\unitlength,page=1]{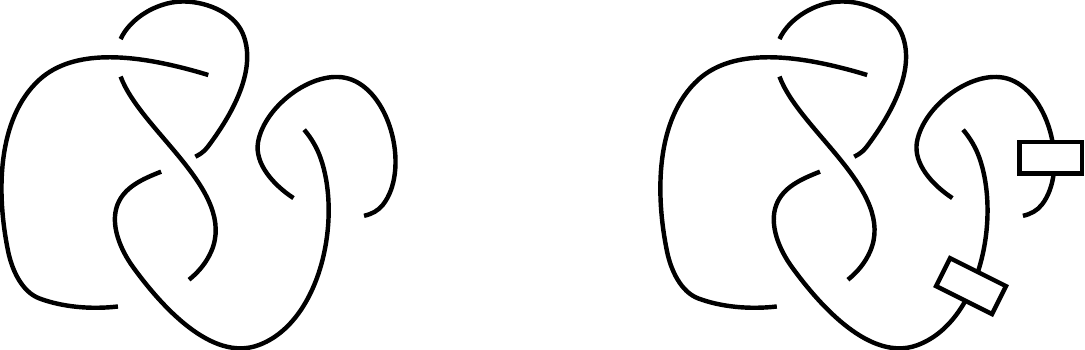}}%
    \put(0.5696536,0.19153911){\makebox(0,0)[lt]{\lineheight{1.25}\smash{\begin{tabular}[t]{l}$n$\end{tabular}}}}%
    \put(0.91556314,0.2636036){\makebox(0,0)[lt]{\lineheight{1.25}\smash{\begin{tabular}[t]{l}$n$\end{tabular}}}}%
  \end{picture}%
\endgroup%

\caption{Left: An example of a knot diagram $D$ and the resulting $\Dj^n$. }
\end{figure} 

\begin{defn} \label{d.cjp} The (unreduced) $n$-th colored Jones polynomial of the knot $K$, with a diagram $D=D(K)$ is given by
$J_K(n)(t) =  ((-1)^{n-1} A^{n^2-1})^{\writhe(D)} \langle \Dj^{n-1} \rangle \rvert_{A = t^{-1/4}},$ where
for the unknot $U = \cir$ we have  $J_U(n)(t) = (-1)^{n-1} \frac{t^{-n/2}-t^{n/2}}{t^{-1/2}-t^{1/2}}$ for $n\geq 2$.

To simplify the notation we will omit the variable $t$ and write $J_K(n)$ for $J_K(n)(t)$.
\end{defn}

\subsection{Fusion calculus and degree bounds on Kauffman brackets of skein elements}
Let $f(A)$ be a rational function $\frac{P(A)}{Q(A)}$ with $P(A), Q(A)$ polynomials with complex coefficients in the variable $A$. We define $\deg f(A)$ to be the maximum power of $A$ in the formal Laurent series expansion of $f(A)$ whose $A$-power is bounded from above. 

For example, to compute $\deg(\frac{A}{-A^2 - A^{-2}})$ we write 
$\frac{A}{-A^2 - A^{-2}} = \frac{A}{-A^{2}(1+A^{-4})} = -A^{-1}\frac{1}{1+A^{-4}}$.
Then, we expand $1/(1+A^{-4}) =  1-A^{-4} + A^{-8} -\cdots$ and multiply by $-A^{-1}$ to obtain 
$ \frac{A}{-A^2 - A^{-2}}   = -A^{-1} + A^{-5} - A^{-9} + \cdots,$
so $\deg(\frac{A}{-A^2 - A^{-2}}) = -1$.    

Note $\deg f(A) = \deg P(A) - \deg Q(A).$ 
With this example of $f(A)$, if we factor the denominator $Q(A)$ in the other way: $Q(A) = -A^{-2}(A^4 + 1)$, then we would obtain the Laurent series expansion of $f(A)$ whose minimum $A$-power is bounded from below which can be used to find the minimum degree of $f(A)$. 

We are interested in using the degrees of rational function summands from the Kauffman bracket definition of the $n$-th colored Jones polynomial to estimate the degree.  To that end, it will be useful to keep in mind the degrees of the following rational functions. 

\begin{lem}\label{l.basic}  We have the following.
\begin{enumerate}[(a)]
\item  $\deg \triangle_c = 2c$.
\item $\deg \theta(a, b, c) = a + b + c$.
\item \label{l.basicc}$ \deg \frac{\triangle_c}{\theta(a, b, c)} = 2c - (a+b+c) = c-a-b$.
\end{enumerate}
\end{lem}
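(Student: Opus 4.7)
The plan is to compute each of the three degrees directly from the closed-form expressions already given in the text, using the definition of $\deg$ as the top power in the Laurent expansion that is bounded above. Part (a) is the starting point; parts (b) and (c) then reduce to (a) plus elementary algebra.

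For (a), I would take the formula $\triangle_c=(-1)^c(A^{2(c+1)}-A^{-2(c+1)})/(A^2-A^{-2})$ and factor the leading $A^{2(c+1)}$ out of the numerator and $A^2$ out of the denominator, writing
\[
\triangle_c=(-1)^c A^{2c}\cdot \frac{1-A^{-4(c+1)}}{1-A^{-4}} = (-1)^c A^{2c}\bigl(1+A^{-4}+A^{-8}+\cdots+A^{-4c}\bigr).
\]
The leading term is $(-1)^c A^{2c}$, giving $\deg \triangle_c=2c$. As a consequence, $\deg \triangle_n!=\sum_{k=1}^{n}2k=n(n+1)$, which I will use throughout.

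For (b), I will plug into formula (\ref{e.theta}). Set $x=(a+c-b)/2$, $y=(b+c-a)/2$, $z=(a+b-c)/2$, and observe $x+y=c$, $y+z=b$, $z+x=a$, and $s:=x+y+z=(a+b+c)/2$. Using $\deg \triangle_n!=n(n+1)$, the degree of $\theta(a,b,c)$ becomes
\[
s(s+1)+x(x-1)+y(y-1)+z(z-1)-a(a-1)-b(b-1)-c(c-1).
\]
The main computation is then showing that the quadratic terms cancel cleanly, leaving the linear expression $a+b+c$. Expanding $s^2=(a+b+c)^2/4$ and $x^2+y^2+z^2=\bigl(3(a^2+b^2+c^2)-2(ab+bc+ca)\bigr)/4$, one checks that $s^2+(x^2+y^2+z^2)-(a^2+b^2+c^2)=0$, while the linear contributions collect to $2s=a+b+c$. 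This is the only non-routine moment in the lemma, and it is the step I expect to absorb most of the verification effort; fortunately, the symmetry $x+y=c$, $y+z=b$, $z+x=a$ makes the cancellation transparent.

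Finally, (c) is immediate: since degree is additive under products and subtractive under quotients of rational functions with nonvanishing leading terms, $\deg(\triangle_c/\theta(a,b,c))=\deg\triangle_c-\deg\theta(a,b,c)=2c-(a+b+c)=c-a-b$ by (a) and (b). The only conceptual subtlety anywhere in the proof is making sure one consistently uses the ``bounded above'' convention for the Laurent expansion so that leading terms do not accidentally cancel; since every $\triangle_n$ has nonzero leading coefficient $\pm 1$, this is automatic.
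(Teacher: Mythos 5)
Your proposal is correct and follows essentially the same route as the paper: compute $\deg\triangle_c=2c$ from the closed form, substitute $\deg\triangle_n!=n(n+1)$ into Equation (\ref{e.theta}) using $x+y=c$, $y+z=b$, $z+x=a$, and take the difference for part (c). The paper leaves the cancellation in part (b) as ``simplifying gives the desired result,'' whereas you carry it out explicitly; your algebra checks out.
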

\begin{proof} 
To compute the degree of a rational function $P(A)/Q(A)$, we take the difference $\deg P(A) - \deg Q(A)$. This is shown in statement \eqref{l.basicc} of the lemma for the degree of $\frac{\triangle_c}{\theta(a, b, c)}$ after determining $\deg \triangle_c $ and $\deg \theta(a, b, c)$. So we compute 
$\deg \triangle_c$. By Definition \ref{d.jw} \eqref{d.delta}, 
\[\triangle_c = (-1)^c \frac{A^{2(c+1)} - A^{-2(c+1)}}{A^2-A^{-2}}.\] 
Therefore $\deg \triangle_c= 2(c+1) - 2 = 2c. $
Next from the explicit formula of $\theta(a, b, c)$ from Equation \eqref{e.theta}, we get 
\[ \deg \theta = \deg (\triangle_{x+y+z}! \triangle_{x-1}! \triangle_{y-1}! \triangle_{z-1}!)  - \deg (\triangle_{y+z-1}! \triangle_{z+x-1}! \triangle_{x+y-1}!),  \] 
with $x = \frac{a+c-b}{2}, y=\frac{b+c-a}{2}, z=\frac{a+b-c}{2}$. 
Note $\deg (\triangle_c!) = c(c+1)$ by summing over the degrees in the factorial. Plugging this into the equation above and simplifying gives the desired result.
\end{proof}

A \emph{twist region} $T$ is a $(2, 2)$-tangle of crossing(s) arranged end-to-end. If there is more than one crossing, then we require that the crossings alternate. For example, $\vcenter{\hbox{\includegraphics[scale=.25]{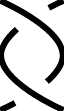}}}$ and $\vcenter{\hbox{\includegraphics[scale=.25]{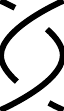}}}$ are both twist regions with two crossings, and $\vcenter{\hbox{\includegraphics[scale=.25]{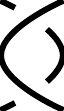}}}$ is not.

\begin{figure}[H]
 \def \svgwidth{.075\columnwidth}
\begingroup%
  \makeatletter%
  \providecommand\color[2][]{%
    \errmessage{(Inkscape) Color is used for the text in Inkscape, but the package 'color.sty' is not loaded}%
    \renewcommand\color[2][]{}%
  }%
  \providecommand\transparent[1]{%
    \errmessage{(Inkscape) Transparency is used (non-zero) for the text in Inkscape, but the package 'transparent.sty' is not loaded}%
    \renewcommand\transparent[1]{}%
  }%
  \providecommand\rotatebox[2]{#2}%
  \newcommand*\fsize{\dimexpr\f@size pt\relax}%
  \newcommand*\lineheight[1]{\fontsize{\fsize}{#1\fsize}\selectfont}%
  \ifx\svgwidth\undefined%
    \setlength{\unitlength}{318.12316654bp}%
    \ifx\svgscale\undefined%
      \relax%
    \else%
      \setlength{\unitlength}{\unitlength * \real{\svgscale}}%
    \fi%
  \else%
    \setlength{\unitlength}{\svgwidth}%
  \fi%
  \global\let\svgwidth\undefined%
  \global\let\svgscale\undefined%
  \makeatother%
  \begin{picture}(1,0.75442476)%
    \lineheight{1}%
    \setlength\tabcolsep{0pt}%
    \put(0,0){\includegraphics[width=\unitlength,page=1]{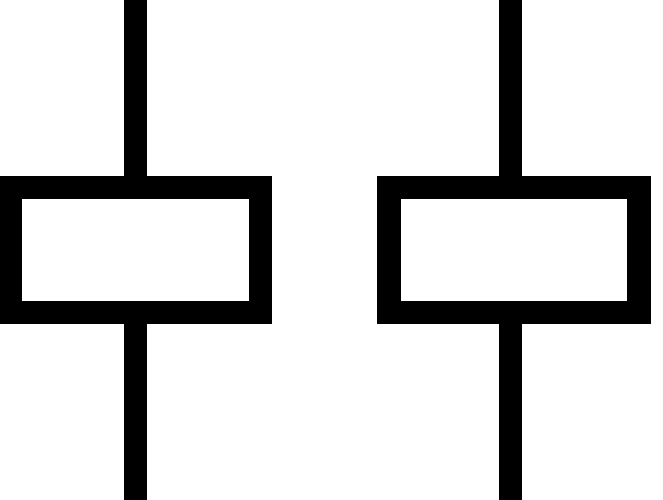}}%
    \put(-0.17260062,0.66012169){\color[rgb]{0,0,0}\makebox(0,0)[lt]{\lineheight{1.25}\smash{\begin{tabular}[t]{l}$n$\end{tabular}}}}%
    \put(0.83172918,0.66012169){\color[rgb]{0,0,0}\makebox(0,0)[lt]{\lineheight{1.25}\smash{\begin{tabular}[t]{l}$n$\end{tabular}}}}%
  \end{picture}%
\endgroup%

 \caption{\label{f.J} The skein element $\mathcal{J}_n$.}
\end{figure} 

We will consider 
$T$ as a skein element in $TL_2$. 
Let $\mathcal{J}_n$ be the skein element in $TL_{2n}$ that is two Jones-Wenzl projectors $\jw_n$ placed side by side, see Figure \ref{f.J}. Let $T^n \in TL_{2n}$ be the $n$-blackboard cable of $T$ and let $\mathbf{T}^n = \mathcal{J}_n \cdot T^n \cdot \mathcal{J}_n$.   

\begin{figure}[H]
\begin{center} 
\def \svgwidth{\columnwidth}
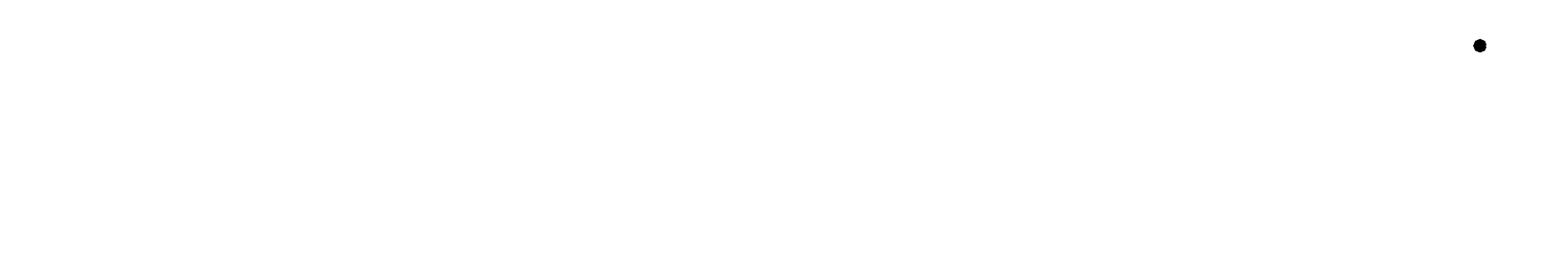 
\end{center} 
\caption{\label{f.funtwist} Fusion on a twist region.}
\end{figure} 

\begin{lem}\label{l.Twistdegree} Given an admissible triple $(a, n, n)$ and an integer $r\neq 0$ , let $d(a, r, n)$ denote the degree of the coefficient function $I(a, r, n): =\frac{\triangle_n}{\theta(n, n, a)} (-1)^{n-\frac{a}{2}} A^{r(2n-a + n^2 -\frac{a^2}{2})} $ resulting from first applying the fusion and then the untwisting formula to evaluate $\langle \mathbf{T}^n \rangle$  from a twist region $T$ with $r$ crossings. Then, we have
$$ d(a, r, n): = \deg I(a, r, n) = 2(r-1)n + (1-r)a + rn^2 - r\frac{a^2}{2}.$$
The case $r=4$ is illustrated in Figure \ref{f.funtwist}.
\end{lem}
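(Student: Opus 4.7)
The plan is to compute $\deg I(a,r,n)$ by adding the degrees of the elementary factors making up $I(a,r,n)$, using the formulas collected in Lemma~\ref{l.basic}. Once the correct factorization of $I(a,r,n)$ is in hand, the statement reduces to a one-line bookkeeping calculation.

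First, I will verify the claimed form of $I(a,r,n)$ by tracing through the fusion/untwisting steps, as illustrated by Figure~\ref{f.funtwist}. Applying the fusion formula of Figure~\ref{f.fusion} just below the top copy of $\mathcal{J}_n$ in $\mathbf{T}^n$ replaces the two parallel $n$-colored strands by a sum, indexed by admissible triples $(n,n,a)$, of skein elements carrying a single $a$-colored strand between two trivalent vertices, each term weighted by $\triangle_a/\theta(n,n,a)$. Once fused, the $a$-strand inherits the $r$ crossings of $T$, and each is removed by one application of the untwisting formula, producing a factor of $(-1)^{n-a/2}A^{\pm(2n - a + n^2 - a^2/2)}$; iterating over all $r$ crossings gives $A^{r(2n - a + n^2 - a^2/2)}$ up to a $\pm 1$ that is irrelevant for the degree. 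This is precisely the factorization recorded (in the case $r=4$) in Figure~\ref{f.funtwist}.

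Second, I will read off the degree using Lemma~\ref{l.basic}. Parts (a) and (b) give $\deg \triangle_a = 2a$ and $\deg \theta(n,n,a) = 2n + a$, so part (c) (or direct subtraction) yields $\deg(\triangle_a/\theta(n,n,a)) = a - 2n$. The $A$-power factor contributes degree exactly $r(2n - a + n^2 - a^2/2) = 2rn - ra + rn^2 - ra^2/2$. Summing these two contributions gives
\[
\deg I(a,r,n) = (a - 2n) + \left(2rn - ra + rn^2 - r\tfrac{a^2}{2}\right) = 2(r-1)n + (1-r)a + rn^2 - r\tfrac{a^2}{2},
\]
as claimed.

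No serious obstacle is expected. The only point requiring care is that iterating the single-crossing untwisting formula $r$ times simply multiplies the crossing-exponent by $r$; this is immediate because each application acts on a diagram differing from the previous one only by the removal of one crossing on the $a$-strand, so the $A$-powers accumulate multiplicatively and the overall dependence on $r$ is linear (and in particular the formula is valid for both signs of $r$).
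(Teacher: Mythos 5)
Your proof is correct and takes essentially the same route as the paper's one-line proof, namely reading off the degree of each factor of $I(a,r,n)$ via Lemma \ref{l.basic} and the fusion/untwisting formulas. One remark: you compute $\deg\bigl(\triangle_a/\theta(n,n,a)\bigr)=a-2n$, whereas the lemma's displayed definition of $I(a,r,n)$ has $\triangle_n$ (not $\triangle_a$) in the numerator; since the stated degree formula is only consistent with $\triangle_a$ (which is indeed the coefficient the fusion formula produces), you have in effect silently corrected a typo in the statement rather than introduced an error.
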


\begin{proof}
This is a straightforward application of the fusion and untwisting formulas and Lemma \ref{l.basic} for the degree of $\frac{\triangle_n}{\theta(n, n, a)}$.  
\end{proof} 

\begin{defn}\label{notation}
We will denote the skein element in $TL_{2n}$ in the sum resulting from applying the fusion and untwisting formulas to $\mathbf{T}^n$ by $\mathcal{I}(a, r, n)$.
\end{defn}

Let $\sk$ be a skein element in $TL_n$ with crossings which may or may not contain a Jones-Wenzl projector. 
The definition of Kauffman states on knot diagrams with crossings extends naturally to skein elements in $TL_n$ with crossings.
We denote by $\sigma(\sk)$ the skein element resulting from applying a Kauffman state $\sigma$ to the crossings of $\sk$, and $\mathbb{G}_\sigma(\sk)$ will denote $\sigma(\sk)$ with the dashed segments of Figure \ref{f.kstate}, which will still be called ``edges" even though $\mathbb{G}_\sigma(\sk)$ may no longer be a graph. For a skein element $\sk$ that is a knot diagram in $K(S^2)$, $\sigma(\sk)$ is the same as the set of state circles. The collection of arcs and circles in $\sigma(\sk)$  are called the \textit{state arcs and state circles} of $\sigma(\sk)$. 

\subsection{Some useful lemmas}Here we will restate some technical lemmas  and definitions from  \cite{lee2020jones}  and we will prove an auxiliary lemma that we will be using in the proof of Theorem \ref{t.mm}. The reader may choose to move directly to the proof of the theorem in the next section and return to the statements as they are called in the course of the proof of Theorem \ref{t.mm}.

\begin{defn} Given a positive integer $k$, let $P = \{k_1, \ldots, k_s\}$ be a nonnegative integer partition of $k$ into $s$ parts. We say that $P$ is a \emph{minimal partition}, denoted by $P_m$, if it minimizes the quantity $m(P)  = \max_{1\leq i \leq s}\{k_i\}$ over all partitions $P$ of $k$ into $s$ parts. 

For example, $P_m = \{1, 2, 2\}$ is a minimal partition of $5=1+2+2$ into 3 parts, and $P = \{3, 2, 0\}$ is not a minimal partition of 5 into 3 parts.   
\end{defn}
A minimal partition exists by the following elementary lemma.
\begin{lem}\label{elementarym}
Given $k$ and $s$, one can find a minimal partition by writing $k = \mu s + b$, where $\mu, b$ are positive integers and $b<s$ using Euclidean division.  A minimal partition of $k$ is given by:
\begin{equation} \label{e.minpartition}
\{\underbrace{\ceil{k/s} = k/s + (s-b)/s, \ldots, \ceil{k/s}}_{\text{$b$ times}},  \underbrace{\floor{k/s} = k/s - b/s, \ldots, \floor{k/s}}_{\text{$s-b$ times}}\}. 
\end{equation} 
\end{lem}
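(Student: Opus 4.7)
The plan is to verify two things: (i) the explicit partition given in \eqref{e.minpartition} is a genuine partition of $k$ into $s$ nonnegative parts, and (ii) no partition of $k$ into $s$ nonnegative parts has a strictly smaller maximum. Both are elementary once Euclidean division $k=\mu s+b$ with $0\le b<s$ is in place, and a short pigeonhole argument handles (ii).

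First I would unpack the ceiling and floor. Since $k=\mu s+b$ with $0\le b<s$, one has $\lfloor k/s\rfloor=\mu$, and $\lceil k/s\rceil=\mu+1$ when $b>0$ while $\lceil k/s\rceil=\mu$ when $b=0$. Thus the proposed multiset $P_m$ consists of $b$ copies of $\mu+1$ and $s-b$ copies of $\mu$ (and collapses to $s$ copies of $\mu$ when $b=0$). A direct calculation
\[
b(\mu+1)+(s-b)\mu \;=\; \mu s + b \;=\; k
\]
confirms that $P_m$ is indeed a partition of $k$ into $s$ nonnegative parts, with $m(P_m)=\lceil k/s\rceil$.

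For minimality, suppose for contradiction that some partition $P=\{k_1,\ldots,k_s\}$ of $k$ into $s$ nonnegative parts satisfies $m(P)<\lceil k/s\rceil$. If $b=0$, then $\lceil k/s\rceil=\mu$ and each $k_i\le \mu-1$ forces $k=\sum k_i\le s(\mu-1)<s\mu=k$, a contradiction. If $b>0$, then $\lceil k/s\rceil=\mu+1$ and each $k_i\le \mu$ forces $k\le s\mu<s\mu+b=k$, again a contradiction. Hence $m(P)\ge m(P_m)$ for every partition $P$ of $k$ into $s$ parts, so $P_m$ is a minimal partition.

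There is no real obstacle here; the only thing to be a little careful about is the degenerate case $b=0$ (where the ``$\lceil k/s\rceil$'' block is empty and the two formulas coincide), and the mild notational point that the statement allows $\mu$ or $b$ to be zero even though it says ``positive integers''. I would simply note at the outset that the argument goes through for $\mu,b\ge 0$ with $b<s$, so that the edge cases $k<s$ (where $\mu=0$) and $s\mid k$ (where $b=0$) are covered uniformly.
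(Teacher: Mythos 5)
Your proof is correct and complete. The paper itself offers no argument for this lemma --- it is stated as ``elementary'' and used immediately afterward --- so there is nothing to compare against; your write-up supplies exactly the standard argument one would expect: a direct computation showing $b(\mu+1)+(s-b)\mu=k$ so that the displayed multiset really is a partition of $k$ into $s$ parts with maximum $\lceil k/s\rceil$, followed by the averaging/pigeonhole observation that any $s$ nonnegative integers summing to $k$ must have maximum at least $\lceil k/s\rceil$. You are also right to flag the two small defects in the statement as written: the phrase ``positive integers'' should be ``nonnegative integers'' (otherwise the cases $k<s$ and $s\mid k$ are excluded), and when $b=0$ the first block of the displayed partition is empty and the formula $\lceil k/s\rceil = k/s+(s-b)/s$ would be off by one if that block were nonempty; your uniform treatment with $\mu,b\ge 0$ and $b<s$ handles both cleanly.
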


Next we recall the following lemma.
\begin{lem}[{\cite[Lemma 3.12]{lee2020jones}}] \label{l.partition} Fix $k$ and $s$. The minimal partition $P_m = \{m_1, \ldots, m_s \}$ of $k$ into $s$ parts is unique up to rearrangement of indices. If $P = \{k_1, \ldots, k_s\}$ is another partition of $k$ into $s$ parts, then 
\[\sum_{i=1}^s m_i^2 \leq \sum_{i=1}^s k_i^2. \] 
\end{lem}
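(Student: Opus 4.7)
The plan is to prove both the uniqueness of the minimal partition and the sum-of-squares inequality by a single \emph{smoothing} argument. Define the smoothing operation on a partition $P=\{k_1,\ldots,k_s\}$ as follows: if there exist indices $i,j$ with $k_i\geq k_j+2$, replace $(k_i,k_j)$ by $(k_i-1,k_j+1)$ and leave all other parts fixed. I would first verify two elementary facts about this move:

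\textbf{Fact 1.} The smoothing operation never increases $m(P)=\max_i k_i$ (because the only part that changes upward is $k_j+1\leq k_i$). \textbf{Fact 2.} A smoothing operation strictly decreases $\sum_{i}k_i^2$: indeed, the change equals
\[
(k_i-1)^2+(k_j+1)^2-k_i^2-k_j^2 = 2(k_j-k_i)+2 \leq -2,
\]
since $k_i\geq k_j+2$.

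Next I would observe that since each smoothing strictly decreases the nonnegative integer quantity $\sum k_i^2$, only finitely many smoothings can be applied to any starting partition. A terminal partition is one with no $i,j$ satisfying $k_i\geq k_j+2$, i.e.\ all parts differ by at most $1$. For such a partition of $k$ into $s$ parts, if $\lfloor k/s\rfloor=\mu$ and $k=\mu s+b$ with $0\le b<s$, then exactly $b$ of the $k_i$ equal $\mu+1$ and $s-b$ equal $\mu$; this is precisely the partition $P_m$ exhibited in Lemma \ref{elementarym}. Hence the terminal partition is unique up to rearrangement, and it coincides with $P_m$.

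Combining these observations gives both conclusions. For uniqueness of $P_m$: any partition that minimizes $m(P)$ must already be terminal (otherwise Fact 1 plus Fact 2 would reduce $\sum k_i^2$ while not increasing $m$, and in fact a closer look shows a non-terminal minimizer could be smoothed to strictly decrease the max when the max is attained only once with value $\geq \lceil k/s\rceil+1$; iterating drives $m$ down), so it must be the unique terminal partition $P_m$. For the inequality: starting from any partition $P$, apply smoothings until reaching $P_m$; each step strictly decreases the sum of squares, so $\sum m_i^2\leq \sum k_i^2$, with equality only if no smoothing was needed, i.e.\ $P=P_m$ up to reordering.

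The only delicate point is making precise the claim that minimizing $m(P)$ forces termination; the cleanest route is to note that a non-terminal $P$ has a pair with $k_i\ge k_j+2$, so in particular $k_i\ge \lceil k/s\rceil+1$ would follow from $k_i$ being strictly above average whenever $k_j$ is strictly below, yielding $m(P)>\lceil k/s\rceil=m(P_m)$. This makes the main obstacle purely bookkeeping: keeping the two optimization problems (minimize $m$, minimize $\sum k_i^2$) aligned under the same smoothing move.
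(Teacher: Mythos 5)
This lemma is quoted from \cite[Lemma 3.12]{lee2020jones} and the present paper gives no proof of it, so there is nothing in-paper to compare your argument against; I can only judge it on its own terms. The smoothing argument is the right idea for the inequality and you execute that half correctly: Facts 1 and 2 check out, termination follows because $\sum_i k_i^2$ is a strictly decreasing nonnegative integer along the process, and the terminal partitions (all parts within $1$ of one another) are exactly the rearrangements of the balanced partition \eqref{e.minpartition} of Lemma \ref{elementarym}. Since that inequality, applied to the explicit balanced partition, is the only content of Lemma \ref{l.partition} actually used in the proof of Theorem \ref{t.main}, the essential part of your proof is sound.

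The gap is in the uniqueness half. Your claim that any minimizer of $m(P)=\max_i k_i$ must be terminal is false: for $k=4$, $s=3$, both $\{2,1,1\}$ and $\{2,2,0\}$ attain the minimum possible maximum $\lceil 4/3\rceil=2$, yet $\{2,2,0\}$ is not terminal (it admits the smoothing $(2,0)\mapsto(1,1)$) and has sum of squares $8>6$. Your proposed repair --- that a non-terminal $P$ would contain a part $k_i\ge\lceil k/s\rceil+1$ --- fails on the same example, where the offending part equals $\lceil k/s\rceil$ exactly; being strictly above the average $k/s$ does not push an integer above $\lceil k/s\rceil$. This example in fact shows that, read literally against the paper's definition of ``minimal partition'' as a minimizer of $m(P)$, the uniqueness assertion is itself false, and the sum-of-squares inequality would fail if one chose $P_m=\{2,2,0\}$. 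The statement is only coherent if $P_m$ is taken to be the explicit balanced partition of Lemma \ref{elementarym} (equivalently, the unique terminal partition, equivalently the unique minimizer of $\sum_i k_i^2$, which your smoothing argument does establish). You should either prove uniqueness in that form --- for which your terminal-partition analysis already suffices --- or flag that the ``minimize the maximum'' formulation does not pin down $P_m$; the paragraph attempting to show that minimizers of the maximum are terminal cannot be fixed as written.
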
 

The next lemma provides conditions under which
a skein element that arises in the evaluation of the Kauffman bracket of $\langle \Dj^n \rangle$, by applying the fusion and the untwisting formula to the $n$-cable of a twist region in $\Dj^n$, evaluates to zero. 
\begin{figure}[H]
\def \svgwidth{.15\columnwidth}
\begingroup%
  \makeatletter%
  \providecommand\color[2][]{%
    \errmessage{(Inkscape) Color is used for the text in Inkscape, but the package 'color.sty' is not loaded}%
    \renewcommand\color[2][]{}%
  }%
  \providecommand\transparent[1]{%
    \errmessage{(Inkscape) Transparency is used (non-zero) for the text in Inkscape, but the package 'transparent.sty' is not loaded}%
    \renewcommand\transparent[1]{}%
  }%
  \providecommand\rotatebox[2]{#2}%
  \newcommand*\fsize{\dimexpr\f@size pt\relax}%
  \newcommand*\lineheight[1]{\fontsize{\fsize}{#1\fsize}\selectfont}%
  \ifx\svgwidth\undefined%
    \setlength{\unitlength}{257.01868012bp}%
    \ifx\svgscale\undefined%
      \relax%
    \else%
      \setlength{\unitlength}{\unitlength * \real{\svgscale}}%
    \fi%
  \else%
    \setlength{\unitlength}{\svgwidth}%
  \fi%
  \global\let\svgwidth\undefined%
  \global\let\svgscale\undefined%
  \makeatother%
  \begin{picture}(1,0.89204089)%
    \lineheight{1}%
    \setlength\tabcolsep{0pt}%
    \put(0,0){\includegraphics[width=\unitlength,page=1]{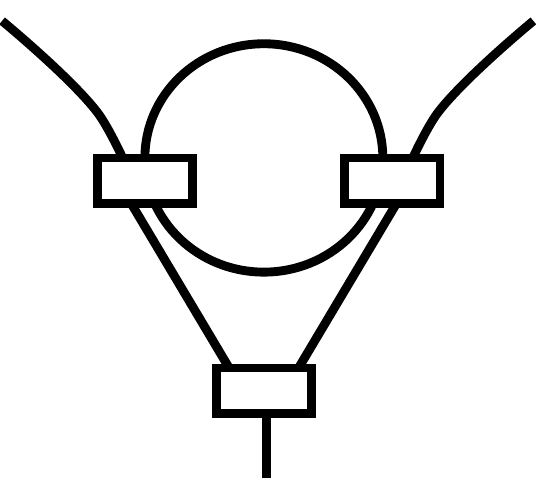}}%
    \put(0.9149574,0.64153913){\color[rgb]{0,0,0}\makebox(0,0)[lt]{\lineheight{1.25}\smash{\begin{tabular}[t]{l}$k$\end{tabular}}}}%
    \put(0.00392711,0.63974251){\color[rgb]{0,0,0}\makebox(0,0)[lt]{\lineheight{1.25}\smash{\begin{tabular}[t]{l}$k$\end{tabular}}}}%
    \put(0.45277613,0.42649505){\color[rgb]{0,0,0}\makebox(0,0)[lt]{\lineheight{1.25}\smash{\begin{tabular}[t]{l}$x$\end{tabular}}}}%
    \put(0.71497513,0.29854648){\color[rgb]{0,0,0}\makebox(0,0)[lt]{\lineheight{1.25}\smash{\begin{tabular}[t]{l}$y$\end{tabular}}}}%
    \put(0.18613307,0.29854648){\color[rgb]{0,0,0}\makebox(0,0)[lt]{\lineheight{1.25}\smash{\begin{tabular}[t]{l}$z$\end{tabular}}}}%
    \put(0.54610129,0.01930476){\color[rgb]{0,0,0}\makebox(0,0)[lt]{\lineheight{1.25}\smash{\begin{tabular}[t]{l}$a$\end{tabular}}}}%
    \put(0.3372308,0.86578479){\color[rgb]{0,0,0}\makebox(0,0)[lt]{\lineheight{1.25}\smash{\begin{tabular}[t]{l}$n-k$\end{tabular}}}}%
  \end{picture}%
\endgroup%
 
\caption{\label{f.skeinz} The skein element $\sk'$ in $K(\dd^2)$.}
\end{figure}

\begin{lem}[{\cite[Lemma 3.2]{lee2020jones}}] \label{l.dskein}
Let $\sk \in K(S^2)$ be a skein element. Suppose that there is a disk $\dd^2$ in $S^2$ so that the intersection of $\dd$ with $\sk$ is the skein  element $\sk' \in K(\dd^2)$ shown in Figure \ref{f.skeinz}.  If  $\frac{a}{2} - k > 0$, then $\langle \sk \rangle = 0$. 
\end{lem}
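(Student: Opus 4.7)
\medskip

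\textbf{Proof plan for Lemma \ref{l.dskein}.}
The plan is to exploit the characteristic vanishing property of the Jones--Wenzl projector (Definition \ref{d.jw}(a), $\jw_{n-k}\cdot e^{n-k}_i=0$) together with a strand-count argument that shows the hypothesis $\frac{a}{2}-k>0$ forces a ``turn-back'' on the $\jw_{n-k}$ projector appearing in $\sk'$.

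First, I would rewrite the trivalent-graph skein $\sk'$ explicitly as an element of a Temperley--Lieb algebra. Applying the local replacement of Figure \ref{f.trigraph} at every trivalent vertex, the three edges meeting at the vertices colored $(k,z,a)$ and $(k,y,a)$ are replaced by the corresponding crossingless matchings of strands, each bundle decorated by its Jones--Wenzl projector. Likewise, the edge labeled $a$ is expanded as $a$ parallel strands through a $\jw_a$ projector. After these replacements the disk $\dd^2$ contains the projector $\jw_{n-k}$ along the top, with $n-k$ strands emerging from its bottom and reconnecting to external strands and to the $a$-edge below.

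Second, I would trace the $a$ parallel strands produced by expanding the $a$-edge. Since at each of the two trivalent vertices the $a$ strands meet the external $k$-strands together with internal strands that ultimately run up into $\jw_{n-k}$, I can count how many of the $a$ strands can possibly ``escape'' out through the $k$-labeled external boundary. At each vertex at most $k$ of the $a$ strands can be routed onto the external $k$-bundle (by the crossingless matching that comes from the replacement of Figure \ref{f.trigraph}), so in total at most $2k$ of the $a$ strands reach the exterior. Under the hypothesis $\frac{a}{2}>k$, that is $a>2k$, at least $a-2k>0$ strands of the $a$-bundle must therefore be routed upward and terminate on $\jw_{n-k}$. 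Because the strands entering $\jw_{n-k}$ come in adjacent pairs under this routing, at least one pair forms a cup on $\jw_{n-k}$, which is exactly an insertion of some generator $e^{n-k}_i$ against $\jw_{n-k}$.

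Third, I invoke the defining property $\jw_{n-k}\cdot e^{n-k}_i=0$. Since every term produced by expanding the crossingless matchings in the previous step contains $\jw_{n-k}$ with at least one cup attached, every such term vanishes, and hence $\langle \sk\rangle=0$.

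The main obstacle is making precise the counting in the second step, namely showing rigorously that \emph{every} way of resolving the trivalent vertices produces at least one turn-back on $\jw_{n-k}$ when $a>2k$. This requires tracking, for each crossingless matching arising at the two $(k,\ast,a)$ vertices and the vertex attached to $\jw_{n-k}$, which of the $a$ strands are paired with external $k$-strands, which with the internal $y,z$ strands, and which with each other; the idempotency of $\jw_a$ rules out $a$-strands pairing among themselves (otherwise that term vanishes by the same $\jw\cdot e=0$ property applied to $\jw_a$), leaving the $\jw_{n-k}$ projector at the top as the only remaining landing site, on which the pigeonhole inequality $a>2k$ then forces the desired cup.
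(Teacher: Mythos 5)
The first thing to say is that the paper does not actually prove Lemma \ref{l.dskein}: it is imported verbatim from \cite[Lemma 3.2]{lee2020jones}, so there is no in-paper argument to compare yours against. That said, your strategy --- expand the trivalent vertices of $\sk'$ into their defining crossingless matchings, observe that at most $2k$ of the $a$ strands emanating from the projector $\jw_a$ can reach the boundary of the disk, and conclude from $\frac{a}{2}-k>0$ that some strand is forced to turn back on a Jones--Wenzl projector, which kills the bracket by property (a) of Definition \ref{d.jw} --- is the standard route to this kind of vanishing statement and is surely the intended one. Your remark that arcs joining two points on the same side of $\jw_a$ are already fatal (so the trapped strands cannot simply pair off among themselves) is also a necessary ingredient of the complete argument.

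The gap is in your third step. You assert that the at least $a-2k$ strands that fail to escape ``come in adjacent pairs'' and therefore produce a cup on $\jw_{n-k}$. Neither half of this is justified: a single arc running from the top of $\jw_a$ to the bottom of $\jw_{n-k}$ annihilates nothing (it is not of the form $e^m_i\cdot \jw_m$ for either projector), and there is no a priori reason the trapped strands should arrive at $\jw_{n-k}$ in adjacent positions rather than interleaved with the $x$, $y$, $z$ strands coming from the other edges. To close the argument you must keep following these strands: they pass through $\jw_{n-k}$, are returned by the closed-off top of the diagram, and re-enter the configuration elsewhere, and only a global count of available endpoints --- or an innermost-arc argument locating an arc with both endpoints on the same side of a single projector --- converts the pigeonhole inequality $\frac{a}{2}>k$ into an actual annihilating turn-back. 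You correctly flag this counting as ``the main obstacle,'' but the proposal as written does not carry it out, so the proof is incomplete at precisely the point where the hypothesis is used.
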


The final results in this section, that we will use  for the proof of Theorem \ref{t.main} to imply Theorem \ref{t.mm}, are Lemma \ref{l.adequatebound} and Corollary  \ref{c.ub} that provide upper bounds
on the degree of the Kauffman bracket of any skein element, in terms of the degree of the bracket of certain ``simpler" skein elements. Before we state the results we need to introduce some notation and terminology.

\begin{defn} \cite[Definition 2.3]{Russell}\footnote{The version of the definition given in this paper has ``points" rather than ``nodes" and we specialize to matching points on the boundary of a disk. }
A \textit{crossingless matching} on $2n$ points is an element in $TL_n$ that is  a collection of  $n$ disjoint arcs connecting the $2n$ points on the boundary of the disk
 $\dd^2$ defining $TL_n$.
 \end{defn}

The set of crossingless matchings forms a basis for $TL_n$  as a vector space over $\mathbb{C}(A)$.
Hence, every  $\mathcal{U}\in TL_n$ may be written in the form $\sum_uh(u) u$ where $h(u)\in\mathbb{C}(A)$ and $u$ runs over all crossingless matchings. In particular, the Jones-Wenzl projector has such an expansion from the following recursive formula \cite[Figure 13.6]{Lickorishbook}: 
\begin{figure} [H]
\begin{equation} \label{nton-1}
\def \svgwidth{.5\columnwidth}
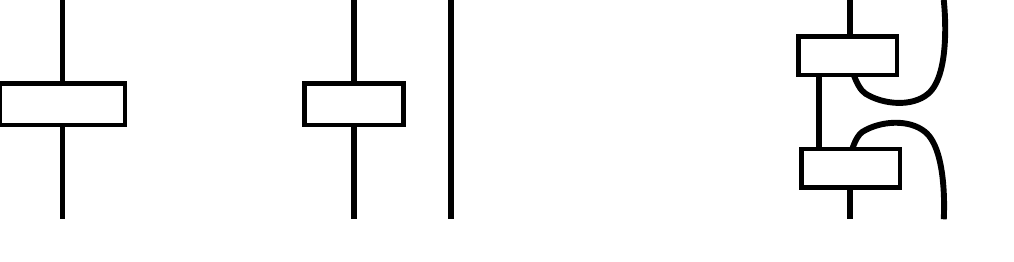
\end{equation} 
\caption{\label{f.nton-1} Recursive formula for the Jones-Wenzl projector.}
\end{figure} 

We note that, in contrast with crossingless matchings, crossingless skein elements in $TL_n$ may be empty.

\begin{defn} \label{d.barS} Given a crossingless skein element  $\sk \in K(S^2)$ (resp.  $T\in K(\dd^2)$) decorated with Jones-Wenzl projectors, we will use $\overline{\sk}$ (resp. $\overline{T}$) to denote the skein element obtained from $\sk$ (resp.  $T\in K(\dd^2)$) by replacing each projector with the identity element in the corresponding Temperley-Lieb algebra. 

Thus, $\overline{\sk}$ is a union of disjoint circles while $\overline{T}$ is a collection of disjoint arcs.

\end{defn} 

Next we prove the following lemma.

\begin{lem} \label{l.adequatebound} Suppose that
$\sk \in K(S^2)$ is a crossingless skein element decorated by a number of Jones-Wenzl projectors $\jw_n \in TL_n$
and, as above, let $\overline{\sk}$  denote the skein element resulting  from replacing each copy of  $\jw_n $ by $1_n \in TL_n$. Then, we have
 \[ \deg \langle \sk \rangle \leq \deg \langle \overline{\sk}\rangle.  \]
\end{lem}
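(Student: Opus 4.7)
The plan is to prove the lemma by induction on the lexicographically ordered complexity pair $(n_{\max}(\sk), k(\sk))$, where $n_{\max}(\sk)$ is the largest size of a Jones--Wenzl projector appearing in $\sk$ and $k(\sk)$ is the number of projectors in $\sk$ of size $n_{\max}(\sk)$. The base case $n_{\max}(\sk) \leq 1$ is trivial, since $\jw_1 = 1_1$ gives $\sk = \overline{\sk}$ outright.

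For the inductive step, pick one $\jw_n$ in $\sk$ with $n = n_{\max}(\sk) \geq 2$ and apply the Wenzl recursion of Equation \eqref{nton-1} locally at that projector. This rewrites the bracket as
\[
\langle \sk \rangle \;=\; \langle \sk_1 \rangle \;-\; \frac{\triangle_{n-2}}{\triangle_{n-1}}\,\langle \sk_2 \rangle,
\]
where $\sk_1$ is obtained from $\sk$ by replacing the chosen $\jw_n$ with $\jw_{n-1}$ on the first $n-1$ strands (the $n$-th strand passing through), and $\sk_2$ is obtained by replacing $\jw_n$ with the composite skein element $T^{n-1}_0 \cdot T^{n-1}_1$ of Figure \ref{f.nton-1}, which contains two copies of $\jw_{n-1}$ joined by a cup-cap on the last two strands. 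In either case the new projectors have size $n-1 < n$, so $\sk_1$ and $\sk_2$ are strictly smaller than $\sk$ in the lex order (either the maximal size strictly decreases, when the chosen $\jw_n$ was the unique maximal one, or the count $k$ at the maximal size strictly decreases).

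The inductive hypothesis then gives $\deg \langle \sk_i \rangle \leq \deg \langle \overline{\sk_i}\rangle$ for $i=1,2$. From the construction, $\overline{\sk_1} = \overline{\sk}$, so the first term immediately satisfies the desired bound. For the second term, $\overline{\sk_2}$ is obtained from $\overline{\sk}$ by a local ``cup--cap'' surgery on two of the $n$ through-strands, which either merges two disjoint state circles of $\overline{\sk}$ into one or splits one circle into two. Hence $\#\text{circles}(\overline{\sk_2}) = \#\text{circles}(\overline{\sk}) \pm 1$, and since each circle contributes $-A^2 - A^{-2}$ of degree $2$ to the bracket, $\deg \langle \overline{\sk_2}\rangle \leq \deg\langle \overline{\sk}\rangle + 2$. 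Combining with $\deg(\triangle_{n-2}/\triangle_{n-1}) = -2$ from Lemma \ref{l.basic}, the correction term contributes at most $-2 + \deg \langle \overline{\sk}\rangle + 2 = \deg\langle \overline{\sk}\rangle$. Therefore $\deg\langle \sk\rangle \leq \deg\langle \overline{\sk}\rangle$, completing the induction.

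The main obstacle I expect is verifying that the complexity measure genuinely decreases for both $\sk_1$ and $\sk_2$ (this requires noting that the two new $\jw_{n-1}$'s appearing in $\sk_2$ are strictly smaller than $\jw_n$), and the elementary but necessary circle-count analysis: recording $\overline{\sk}$ as a perfect matching between the $2n$ boundary points of the local disk, and observing that swapping two through-chords for one top cap and one bottom cap alters the number of cycles in the union of the inside and outside matchings by exactly $\pm 1$ depending on whether the two through-chords originally lay on one or two external cycles.
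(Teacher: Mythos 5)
Your proof is correct and follows essentially the same route as the paper's: both expand Jones--Wenzl projectors via the recursion \eqref{nton-1} and balance the degree $-2$ of the coefficient $-\triangle_{n-2}/\triangle_{n-1}$ against the degree change of $\pm 2$ caused by a cup--cap merging or splitting circles of the underlying crossingless diagram. The only substantive difference is organizational: you peel off one maximal projector at a time under a lexicographic induction on $(n_{\max},k)$, whereas the paper inducts on a uniform projector size and expands all $s$ projectors simultaneously into $2^{s}$ terms before comparing each $\overline{\sk_{\alpha}}$ to $\overline{\sk}$ by a chain of single swaps; your scheme has the minor advantage of transparently covering skeins whose projectors have different sizes (as actually occurs in the application to $\sk_{\sigma}(a)$, which carries both $\jw_{n}$ and $\jw_{a}$), a case the paper's induction handles only implicitly.
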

\begin{proof} 
Suppose that the number of projectors contained in $\sk$ is $s$.
The proof will be by induction on $n$. 

Suppose $n+1=2$. Let $\sk$ be a skein element decorated by $s$ copies of $\jw_2$, and let  $\overline{\sk}$ denote the skein element resulting from replacing each copy of the projector with $1_2$.
Using \eqref{nton-1} we can expand each of the $s$ copies of $\jw_2$ in terms of the basis of crossingless matchings $\{1_2, e_1^2\}$, 
to write $\langle \sk \rangle $ into a sum of $2^s$ terms where each term is of the form
\[\deg \left(-\frac{\triangle_0}{\triangle_1} \right)^l \langle \overline{\sk}' \rangle \leq \deg   \langle \overline{\sk} \rangle, \] 
where $0\leq l\leq s$ and $\overline{\sk}'$ is a skein element obtained from $\overline{\sk}$ by replacing $l$ copies of the identity that were copies of  $\jw_2$ on the original $\sk$.
We can realize this process by a length $l$ sequence of skein elements 
$$\sk_0=\overline{\sk}\longrightarrow \sk_1 \longrightarrow \cdots \longrightarrow   \sk_l=\overline{\sk}',$$
such that at each step one identity $1_2$ replacing the projector is changed to $e_1^2$.
Since $\sk$ is crossingless, for $0\leq i \leq l$, $\sk_i$
is a collection of disjoint circles and in $\sk_{i+1}$ we merge or split two curves of $\sk_i$.
Thus
\[\deg \left(-\frac{\triangle_0}{\triangle_1} \right)\langle \sk_{i+1} \rangle= \deg \langle \sk_i \rangle -2\pm 2  \leq \deg \langle \sk_i \rangle,\] 
and the conclusion follows in this case.

Assuming inductively that the conclusion holds for $n>1$, let $\sk$ be a crossingless element decorated by $s$ copies of $\jw_{n+1}$.
Using \eqref{nton-1}  we can expand each of the $s$ copies in terms of the skein elements $\{T_0^n, T_1^n\}$ of  Figure \ref{f.nton-1}. Consider
the function $\alpha$ from the set of $s$ copies of    $\jw_{n+1}$       to  $\{T_0^n, T_1^n\}$.
Using  \eqref{nton-1} we write a sum of $2^s$ terns
\[\langle \sk \rangle = \sum_{\alpha}  f_{\alpha}  \langle \sk_{\alpha} \rangle ,   \] 
where $ f_{\alpha}:=\left(-\frac{\triangle_{n-1}}{\triangle_n} \right)^{\alpha_1} $ and $\alpha_1$ is the number of times $\alpha$ chooses $T_1^n$ for a copy of $\jw_{n+1}$ and $\sk_{\alpha} $ is the skein element obtained from $\sk$ by applying $\alpha$.
 By the induction hypothesis, we have 
\begin{equation}
\deg \langle \sk_\alpha  \rangle \leq \deg \langle \overline{\sk_{\alpha}} \rangle.  
\label{induction}
\end{equation}

Note that each
$\overline{\sk_{\alpha}}$ comes from $\sk_{\alpha}$ by replacing each  $T_0 ^n$, $T_1 ^n$ assigned by $\alpha$
by $\overline{T_1^n}$,  $\overline{T_0^n}$, respectively.  Also $ \overline{\sk}$ comes from the function $\alpha^0$ that chooses $T_0^n$ for each of the $s$ copies of the projector.
For any $\sk_{\alpha}$ we can take a sequence of $\alpha_1$ skein elements that starts from $ \overline{\sk}$ and ends at $\overline{\sk_{\alpha}}$, where each step switches
a single copy of $\overline{T_0^n}$ to  $\overline{T_1^n}$.
The replacement of a copy of $\overline{T_0^n}$ by $\overline{T_1^n}$ can merge or split two circles in the previous skein element of sequence.
As earlier we conclude that for $\alpha\neq \alpha^0$, 
$\deg f_{\alpha} \langle \overline{\sk_{\alpha}} \rangle \leq  \langle \overline{\sk} \rangle$.
Combining this with \eqref{induction} we get
\[ \deg f_\alpha \langle \sk_\alpha \rangle  \leq \deg f_\alpha \langle \overline{\sk_\alpha} \rangle\leq \deg \langle \overline{\sk} \rangle. \] 

Then since 
\[\deg \langle \sk \rangle \leq \max \{ \deg f_\alpha \langle \sk_\alpha \rangle\},\]
we have  $\deg \langle \sk \rangle  \leq \deg \langle \overline{\sk} \rangle$.

\end{proof}

A consequence of Lemma \ref{l.adequatebound} is the following. 
\begin{cor} \label{c.ub} Suppose that $\sk \in K(S^2)$ is a skein element with crossings which contains a number of  Jones-Wenzl projectors $\jw_n \in TL_n$. Let $\overline{\sk_A}$ be the skein element obtained from $\sk$  by first applying the all-$A$ Kauffman state, then replacing each of the projectors in $\sk_A$ by the identity $1_n$. Then, we have 
\[\deg \langle  \sk \rangle \leq \deg \left( A^{\sgn(\sigma_A)} \langle \overline{\sk_A} \rangle \right). \] 
\end{cor}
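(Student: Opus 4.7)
The plan is to derive the corollary from Lemma \ref{l.adequatebound} by first expanding the bracket into a state sum over the crossings of $\sk$ and then showing that among all states the all-$A$ state dominates in degree. Concretely, by applying the Kauffman bracket skein relation at each crossing of $\sk$ one at a time, I would write
\[
\langle \sk \rangle \;=\; \sum_{\sigma} A^{\sgn(\sigma)} \langle \sigma(\sk) \rangle,
\]
where the sum is over all Kauffman states $\sigma$ on the crossings of $\sk$. Each term $\sigma(\sk)$ is a crossingless skein element that still carries the same collection of Jones--Wenzl projectors as $\sk$, so Lemma \ref{l.adequatebound} gives
\[
\deg \langle \sigma(\sk) \rangle \;\leq\; \deg \langle \overline{\sigma(\sk)} \rangle.
\]
Since replacing projectors with identity tangles is a local operation that commutes with resolving crossings, $\overline{\sigma(\sk)} = \sigma(\overline{\sk})$ is a disjoint union of circles in $S^2$, so $\deg \langle \overline{\sigma(\sk)} \rangle = 2\,|\sigma(\overline{\sk})|$, where $|\sigma(\overline{\sk})|$ denotes the number of state circles.

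Combining these estimates yields
\[
\deg \langle \sk \rangle \;\leq\; \max_{\sigma}\Bigl(\sgn(\sigma) + 2\,|\sigma(\overline{\sk})|\Bigr),
\]
so it suffices to show that this maximum is attained at $\sigma = \sigma_A$. This is exactly the classical argument underlying the Kauffman upper bound on the Kauffman bracket. Starting from any state $\sigma \neq \sigma_A$, pick a crossing where $\sigma$ chose the $B$-resolution and switch it to an $A$-resolution to obtain a new state $\sigma'$. Then $\sgn(\sigma') = \sgn(\sigma) + 2$, while the corresponding local surgery on $\sigma(\overline{\sk})$ either merges two circles into one or splits one circle into two, changing $|\sigma(\overline{\sk})|$ by $\pm 1$. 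Hence $\sgn(\sigma) + 2|\sigma(\overline{\sk})|$ changes by $2 \pm 2 \in \{0,4\}$, and in particular it does not decrease. Iterating until every crossing carries the $A$-resolution produces $\sigma_A$ and shows
\[
\sgn(\sigma) + 2\,|\sigma(\overline{\sk})| \;\leq\; \sgn(\sigma_A) + 2\,|\sigma_A(\overline{\sk})| \;=\; \sgn(\sigma_A) + \deg \langle \overline{\sk_A} \rangle.
\]

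Putting everything together gives
\[
\deg \langle \sk \rangle \;\leq\; \sgn(\sigma_A) + \deg \langle \overline{\sk_A} \rangle \;=\; \deg\bigl( A^{\sgn(\sigma_A)} \langle \overline{\sk_A} \rangle \bigr),
\]
as required. I expect the main technical point to check is the interchangeability $\overline{\sigma(\sk)} = \sigma(\overline{\sk})$, which must be justified from the locality of the resolution operation relative to the boxes representing the projectors, and the verification that the local merge/split dichotomy used in the switching step genuinely applies in the presence of projector boxes (it does, because after passing to $\overline{\sk}$ the boxes are replaced by straight identity strands and one is in the classical setting of disjoint circles in $S^2$). Once these compatibilities are recorded, the rest is a direct bookkeeping from Lemma \ref{l.adequatebound} and the state-sum expansion.
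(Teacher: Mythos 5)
Your proof is correct and follows essentially the same route as the paper: expand $\langle \sk \rangle$ as a state sum, apply Lemma \ref{l.adequatebound} to each term, use the identification $\overline{\sk_\sigma}=(\overline{\sk})_\sigma$, and reduce to the classical statement that the all-$A$ state maximizes $\sgn(\sigma)+2|\sigma(\overline{\sk})|$ for a link diagram. The only cosmetic difference is that you re-derive that classical maximality via the explicit merge/split switching argument, whereas the paper simply cites it from \cite{Lickorishbook}.
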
 
\begin{proof}
 If $\sk$ does not contain any Jones-Wenzl projectors, then $\overline{\sk} = \sk$ and  we have a link diagram in $K(S^2)$ for which it is well known \cite{Lickorishbook} that 
 \[\deg \langle \sk \rangle \leq \max_{\sigma} \{ \deg \left(  A^{\sgn(\sigma)}  \langle \sk_\sigma \rangle  \right) \}  \leq \deg \left( A^{\sgn(\sigma_A)}  \langle \sk_A\rangle \right).   \]
 
 If $\sk$ contains Jones-Wenzl projectors, then first we apply Kauffman states to all the crossings of $\sk$ to expand $\sk = \sum_{\sigma} A^{\sgn(\sigma)} \langle \sk_\sigma \rangle$ and then apply Lemma \ref{l.adequatebound} to each term to get 
\[\deg \langle \sk_\sigma \rangle \leq \deg \langle \overline{\sk_\sigma} \rangle. \]
Hence we have

\[  \deg \langle \sk \rangle \leq \max_\sigma \{ \deg  \left(  A^{\sgn(\sigma)} \langle \sk_\sigma \rangle \right) \}  \leq \max_\sigma \{ \deg  \left(  A^{\sgn(\sigma)} \langle \overline{\sk_\sigma} \rangle \right) \}.  \]  
 Now for each state we have  $\overline{\sk_\sigma} = (\overline{\sk})_\sigma$, where $(\overline{\sk})_\sigma$ comes from applying the Kauffman state $\sigma$ to the link diagram $\overline{\sk}$.  Thus we can write
\[ \deg  \left(  A^{\sgn(\sigma)} \langle \overline{\sk_\sigma} \rangle \right) \leq \deg \left( A^{\sgn(\sigma)}\langle (\overline{\sk})_\sigma \rangle\right)  \leq \deg \left(  A^{\sgn(\sigma_A)} \langle  (\overline{\sk})_A \rangle \right)  = \deg \left( A^{\sgn(\sigma_A)}  \langle \overline{\sk_A} \rangle \right).\] 
Now the conclusion follows by combining the last inequality with the preceding one. 
\end{proof}

\section{Refined bounds on the quadratic growth rate of the degree }
In this section we will prove Theorem \ref{t.mm}.
It follows from Corollary \ref{c.ub} that there is an upper bound $H_n(D)$ for $\deg \langle \Dj^n \rangle$: 
\[ \deg \langle \Dj^n \rangle \leq H_n(D) := c(D) n^2 + 2v_A(D)n.  \] 
This is obtained by directly calculating the number of circles in the all-$A$ state on $\overline{\Dj^n}$ and the number of crossings on which the $A$-resolution is chosen. 

\begin{defn} For a diagram $D=D(K)$ recall that $\writhe(D)$ is the writhe of $D$ and define 
\begin{equation} \label{e.h_n} h_n(D):= - \frac{H_n(D)}{4} + \writhe(D)\frac{n^2-1}{4}. \end{equation}  By Definition \ref{d.cjp}, $h_n(D)$ is a lower bound of $\deg J_K(n)$.
\end{defn}

\begin{thm} \label{t.main} Suppose a knot diagram $D$ is not $A$-adequate, then 
\[ d_-[J_K(n)] \geq  h_n(D) + p_-(D)n^2 + O(n), \]
 for a constant $p_{-}(D)>0$ which depends on the diagram $D$.  
\end{thm}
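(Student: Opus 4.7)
The goal is to strengthen the upper bound $\deg \langle \Dj^n \rangle \le H_n(D)$ of Corollary \ref{c.ub} by a positive quadratic term in $n$ whenever $D$ is non-$A$-adequate; the theorem then follows directly by applying Definition \ref{d.cjp} and rescaling.

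The plan is to work locally at a crossing that witnesses the failure of $A$-adequacy. Non-$A$-adequacy provides a crossing $c_0$ of $D$ such that both arcs of its all-$A$ resolution land on a single common state circle $C_0$ of $\sigma_A(D)$. Treating $c_0$ as a one-crossing twist region of sign $r=\pm 1$, I would apply the fusion and untwisting formulas of Figure \ref{f.fusion} only at $c_0$; writing $\mathcal{I}(a,r,n)$ as in Definition \ref{notation}, this yields
\begin{equation*}
\langle \Dj^n\rangle = \sum_{a=0,2,\dots,2n} I(a,r,n)\,\big\langle \Dj^n(a)\big\rangle,
\end{equation*}
where $\Dj^n(a)$ denotes the skein element obtained by replacing the twist region at $c_0$ with $\mathcal{I}(a,r,n)$. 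Lemma \ref{l.Twistdegree} then gives $\deg I(a,r,n) = rn^2 - ra^2/2 + O(n)$.

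Next, for each fixed $a$, I would bound $\deg\langle \Dj^n(a)\rangle$ by applying Corollary \ref{c.ub} to the remaining crossings of $\Dj^n(a)$ and substituting the identity for every Jones--Wenzl projector via Lemma \ref{l.adequatebound}. A direct state-circle count shows that the $a=0$ summand contributes exactly $H_n(D) + O(n)$, recovering Corollary \ref{c.ub}, while increasing $a$ trades $n$-cabled circles of the all-$A$ state along $C_0$ for an $a$-colored arc sitting on $C_0$.

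The crux is then Lemma \ref{l.dskein}. Because the one-edged loop at $c_0$ forces the $a$-strand produced by fusion to sit in $S^2$ in exactly the configuration of Figure \ref{f.skeinz}, with $k$ equal to the number of cable strands threading the Jones--Wenzl projector on $C_0$ on the far side of the $a$-arc, Lemma \ref{l.dskein} forces $\langle \Dj^n(a)\rangle = 0$ whenever $a/2 < k$. A local analysis at $C_0$ shows that $k$ scales linearly with $n$, so every nonvanishing summand must satisfy $a\ge \alpha n$ for an explicit constant $\alpha=\alpha(D)>0$. Substituting this bound into Lemma \ref{l.Twistdegree}, the coefficient $I(a,r,n)$ suffers a degree loss of order $r\alpha^{2}n^{2}/2$; using Lemma \ref{l.partition} to distribute color optimally across any additional fused edges that arise in controlling $\deg\langle \Dj^n(a)\rangle$, the degree of every surviving summand is at most $H_n(D) - 4p_-(D)n^2 + O(n)$ for a constant $p_-(D)>0$ depending only on the local combinatorics at $c_0$.

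The main obstacle I foresee is the local analysis supporting the application of Lemma \ref{l.dskein}: one must translate the topological fact that the $A$-arcs at $c_0$ both close up on $C_0$ into the precise skein-element picture of Figure \ref{f.skeinz} with $k$ of order $n$, and then verify that the lower bound $a\ge\alpha n$ remains effective after reapplying the fusion calculus to the remaining crossings of $D$. This is the projector-absorption analysis developed in \cite{lee2020jones}, which the paper cites as the source of most of the preparatory lemmas, and is where I expect the technical bulk of the argument to live.
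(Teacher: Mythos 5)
Your overall setup---fuse at a crossing witnessing the failure of $A$-adequacy, expand over the fusion parameter $a$, and control each summand via Lemma \ref{l.adequatebound} and Corollary \ref{c.ub}---matches the paper's, but the mechanism you propose for extracting the quadratic gain does not work. First, you have reversed Lemma \ref{l.dskein}: it says the bracket vanishes when $\frac{a}{2}-k>0$, so a nonvanishing summand forces the \emph{upper} bound $a\leq 2k$; it can never yield the lower bound $a\geq\alpha n$ on which your degree-loss estimate rests. Second, your claim that the $a=0$ summand contributes $H_n(D)+O(n)$ is false (and, if true, would make the theorem unreachable by this route): with $r=-1$ the coefficient degree $d(a,-1,n)=-4n+2a-n^2+\frac{a^2}{2}$ is \emph{increasing} in $a$, so the $a=0$ term contributes only $H_n(D)-2n^2+O(n)$, and it is the $a=2n$ term that would realize $H_n(D)$.

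The correct mechanism, absent from your proposal, is a tradeoff. Because the offending edge $e$ is a one-edged loop, the all-$A$ state closes off the complementary tangle with zero through strands, so by Lemma \ref{l.dskein} (in the correct direction) it survives only with $a=0$ and already loses $2n^2$. A general state $\sigma$ can afford $a=2k_\sigma$ only if its tangle $\mathcal{T}_\sigma$ has $2k_\sigma$ through strands; relative to $H_n(D)$ this still costs $2(n^2-k_\sigma^2)+O(n)$ in the coefficient, and the through strands trace closed walks on $\mathbb{G}_A$ based at the vertex of $e$, where Lemma \ref{l.xoriented} (which you never invoke) shows that each cabled crossing a walk passes through forces at least $k_y^2$ $B$-resolutions, hence a further loss of $2k_y^2$ in $\sgn(\sigma)$. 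Lemma \ref{l.partition} is then used to distribute the through strands over the finitely many ($s$) such walks and bound this loss below by $2k_\sigma^2/s$; minimizing $2(n^2-k_\sigma^2)+2k_\sigma^2/s$ over $0\leq k_\sigma\leq n$ gives the uniform deficit $\frac{2}{s}n^2+O(n)$. Without this through-strand/walk analysis nothing prevents a state from simultaneously achieving large $a$ and degree close to $H_n(D)$, so the proposal has a genuine gap at its central step.
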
 

Before proving Theorem \ref{t.main} we show it implies Theorem \ref{t.mm}. 

\subsection*{Proof of Theorem \ref{t.mm}}
\begin{proof} 
First, the statement of Theorem \ref{t.main} directly gives Theorem \ref{t.mm} \eqref{t.mma} since $h_n(D) = \frac{-c(D)+\writhe(D)}{4}n^2 + O(n)$. Secondly, if $D$ is not $B$-adequate, then taking the mirror image of $D$ gives a diagram $D^*$ that is not $A$-adequate. Applying Theorem \ref{t.main} to $K^* = D^*$ and using the fact that $d_+[ J_K(n)] = -d_-[ J_{K^*}(n)]$ gives Theorem \ref{t.mm} \eqref{t.mmb}.  Part (c) follows from combining parts \eqref{t.mma} and \eqref{t.mmb} to get $s_2\leq 2c_+(D) -4p_+(D)$ and  $s_2^*\geq -2c_-(D) + 4p_-(D)$. 
This implies \[ d[J_K(n)] \leq  (2c(D)- 4p(D))n^2  + O(n), \]  
where $p(D): = p_+(D) + p_-(D)$. 
\end{proof} 

\subsection{State graphs and through strands} It turns out that information about contributions of individual Kaufman states on $\Dj^n$ 
 to $\deg \langle \Dj^n \rangle$ is encoded in  certain walks on the all-$A$ state graph $\mathbb{G}_A$ of $D$. In this subsection, 
 we set up this correspondence and prove an auxiliary lemma that will be used in the proof of Theorem \ref{t.main}.

 Let $u, v\in TL_{2n}$ be two crossingless matchings embedded in two copies of the disk $ \dd^2\subset S^2$ as shown in the left panel of Figure \ref{f.taddition}. The points of each of $u,v$ on the boundary  $\partial \dd^2$ of the disk
 are separated into four groups that can be labeled as  northwest (NW), northeast (NE), southwest (SW) and southeast (SE).
 Generalize the notions of addition and  numerator closure of $(2n, 2n)$-tangles,
we define $N(u + v)$ to be the skein element in $K(S^2)$ obtained by joining
 the NE (resp. SE) points  of $u$ on $\partial \dd^2$  to the NW (resp. SW)  points of $v$ by parallel arcs (right  panel of Figure \ref{f.taddition}).
 Similarly we join the NW (resp. SW) points of $u$ on $\partial \dd^2$   to the NE (resp. SE) points of $v$.
 \begin{figure}[H]
\def \svgwidth{.5\columnwidth} 
\begingroup%
  \makeatletter%
  \providecommand\color[2][]{%
    \errmessage{(Inkscape) Color is used for the text in Inkscape, but the package 'color.sty' is not loaded}%
    \renewcommand\color[2][]{}%
  }%
  \providecommand\transparent[1]{%
    \errmessage{(Inkscape) Transparency is used (non-zero) for the text in Inkscape, but the package 'transparent.sty' is not loaded}%
    \renewcommand\transparent[1]{}%
  }%
  \providecommand\rotatebox[2]{#2}%
  \newcommand*\fsize{\dimexpr\f@size pt\relax}%
  \newcommand*\lineheight[1]{\fontsize{\fsize}{#1\fsize}\selectfont}%
  \ifx\svgwidth\undefined%
    \setlength{\unitlength}{727.92425681bp}%
    \ifx\svgscale\undefined%
      \relax%
    \else%
      \setlength{\unitlength}{\unitlength * \real{\svgscale}}%
    \fi%
  \else%
    \setlength{\unitlength}{\svgwidth}%
  \fi%
  \global\let\svgwidth\undefined%
  \global\let\svgscale\undefined%
  \makeatother%
  \begin{picture}(1,0.47134696)%
    \lineheight{1}%
    \setlength\tabcolsep{0pt}%
    \put(0,0){\includegraphics[width=\unitlength,page=1]{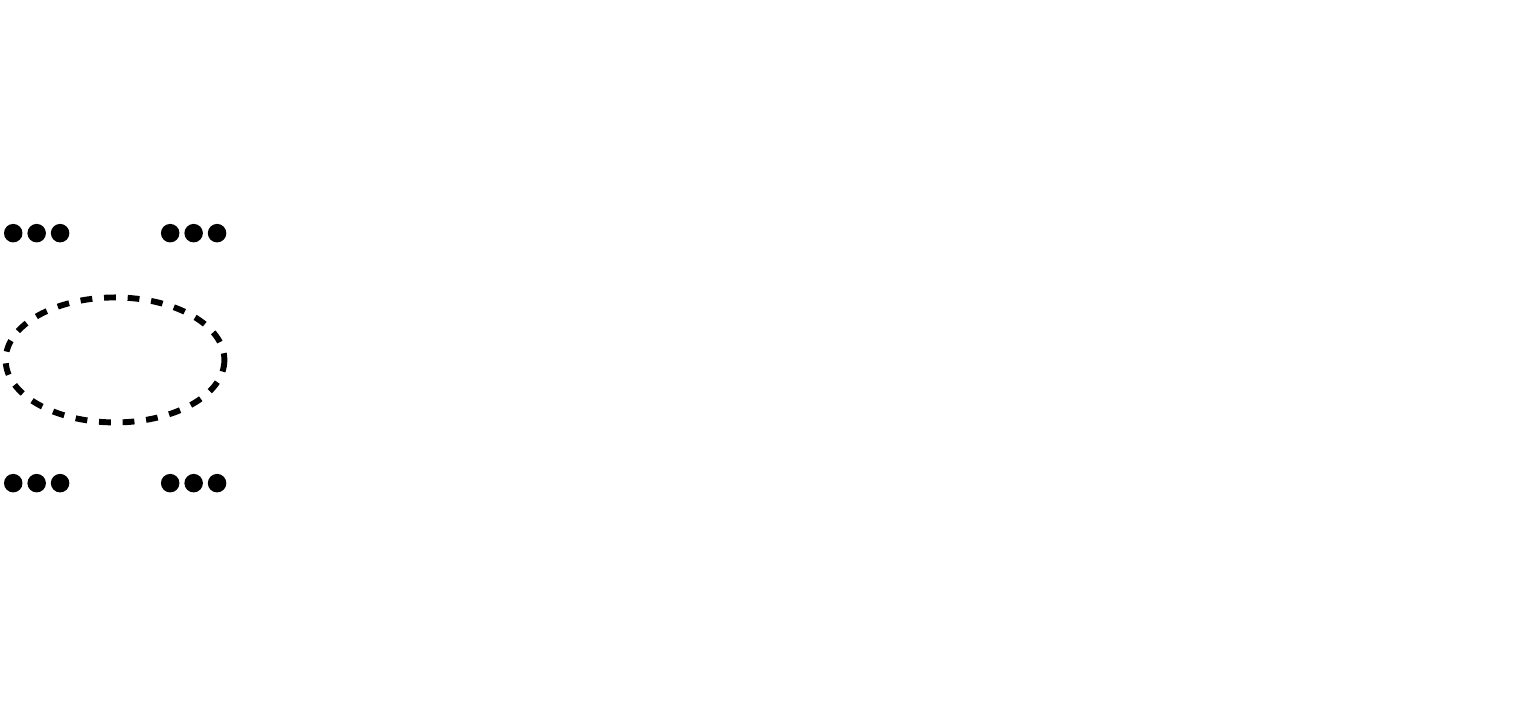}}%
    \put(0.05524124,0.23399691){\makebox(0,0)[lt]{\lineheight{1.25}\smash{\begin{tabular}[t]{l}$u$\end{tabular}}}}%
    \put(0,0){\includegraphics[width=\unitlength,page=2]{taddition.pdf}}%
    \put(0.3232911,0.23399691){\makebox(0,0)[lt]{\lineheight{1.25}\smash{\begin{tabular}[t]{l}$v$\end{tabular}}}}%
    \put(0,0){\includegraphics[width=\unitlength,page=3]{taddition.pdf}}%
    \put(0.63714291,0.23527464){\makebox(0,0)[lt]{\lineheight{1.25}\smash{\begin{tabular}[t]{l}$u$\end{tabular}}}}%
    \put(0,0){\includegraphics[width=\unitlength,page=4]{taddition.pdf}}%
    \put(0.90519284,0.23527464){\makebox(0,0)[lt]{\lineheight{1.25}\smash{\begin{tabular}[t]{l}$v$\end{tabular}}}}%
    \put(0,0){\includegraphics[width=\unitlength,page=5]{taddition.pdf}}%
  \end{picture}%
\endgroup%
  
\caption{\label{f.taddition} $N(u+v)$ for $u, v \in TL_{6}$}
\end{figure} 

For two skein elements $\mathcal{U}= \sum h(u)u $ and $\mathcal{V} = \sum g(v) v$, where $u$ and $v$ are crossingless matchings in $TL_{2n}$, we define an addition operation $N(\mathcal{U} +  \mathcal{V})$ to be 
\[ N(\mathcal{U} +  \mathcal{V}) := \sum h(u) g(v) N(u +  v).  \]

\begin{defn}
Let $\sk$ be a crossingless skein element in $TL_n$ which does not contain a Jones-Wenzl projector. A \textit{through strand} of $\sk$ is an arc in $\sk$ with one endpoint on the top of the disk and the other endpoint on the bottom of the disk. 
\end{defn} 

Given a skein element $\sk \in K(S^2)$ with crossings and a Kauffman state $\sigma=\sigma(\sk)$, there is a sequence of states
$s:=\{ \sigma_1 = \sigma_A\rightarrow\ldots \rightarrow \sigma_f = \sigma \}$ 
 from the all-$A$ state $\sigma_A$ to $\sigma$ such that, for $1\leq i \leq f-1$, $\sigma_{i+1}$  is obtained from $\sigma_i$
 by replacing the $A$-resolution at a single crossing with  the $B$-resolution.
 
 Given a single crossing $y$ on  $\sk \in K(S^2)$, it will be convenient for us to view it as a skein element  in $TL_2$. Then the $n$-cable of $y $ is an element in $TL_{2n}$ which we will denote by $y^n$.
 If $y^n$ is part of a skein element $\sk \in K(S^2)$ and $\sigma$ is a state on $ \sk$
 we will use $\sigma|y^n$ to
 denote the restriction of $\sigma$ on $y^n$ and we will use $\sigma(y^n)$ to denote the element of $TL_{2n}$ resulting from $\sigma|y^n$.

\begin{lem}{{\cite[Lemma 3.7(b)]{lee2020jones}}} \label{l.xoriented} Let  $y$ be a single crossing $\vcenter{\hbox{\includegraphics[scale=.1]{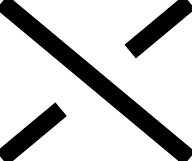}}}$ viewed as an element of $TL_2$. If the skein element $\sigma(y^n)$ has $2k$ through strands, then the sequence of states from $\sigma_A|y^n$  to $\sigma|y^n$ contains a subsequence  of length $k^2$. 
\end{lem}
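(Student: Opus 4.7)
The plan is to reformulate the claim as a purely combinatorial problem on an $n\times n$ grid. I would identify the $n^2$ crossings of the cable $y^n$ with entries of $[n]\times[n]$, where crossing $(i,j)$ lies at the intersection of the $i$-th strand of one component of $y$ with the $j$-th strand of the other, and identify $\sigma|y^n$ with the subset $S\subseteq[n]\times[n]$ of crossings receiving the $B$-resolution. Since the sequence from $\sigma_A|y^n$ to $\sigma|y^n$ flips one crossing from $A$ to $B$ at each step, its length equals $|S|$, and the lemma reduces to the statement: if $\sigma(y^n)$ has $2k$ through strands, then $|S|\geq k^2$.

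Next I would analyze how through strands in $\sigma(y^n)$ correspond to structures in $S$. The all-$A$ state $\sigma_A(y^n)$ is a union of caps and cups with no through strands, whereas the all-$B$ state $\sigma_B(y^n)$ equals the identity $1_{2n}$ with $2n$ through strands; thus through strands in $\sigma(y^n)$ arise only from patterns of $B$-resolutions. Tracing a through strand through the cable and using the anti-diagonal arrangement of crossings along each cable strand in the standard cabling pattern, I would aim to show that each through strand corresponds to a monotone staircase of entries of $S$ in the grid. The target would then be to argue that $2k$ through strands in the planar matching force $k$ such staircases whose supports are pairwise disjoint in both row and column coordinates, producing a $k\times k$ combinatorial structure in $S$ with $|S|\geq k^2$.

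The main obstacle will be making this correspondence rigorous: ensuring that the $k$ staircases extracted from $2k$ through strands contribute $k^2$ distinct cells of $S$ without double-counting. A natural strategy is induction on $k$ combined with a Hall-type matching argument between through strands and rows (or columns) of $S$; alternatively one could project $S$ onto its two axes and apply a pigeonhole or rank-style estimate. Either approach requires a careful topological analysis of the planar matching of $\sigma(y^n)$ to determine how the $B$-resolved crossings must be arranged to support simultaneous through strands, and this planar constraint is what ultimately forces the $k^2$ lower bound rather than a weaker multiple-of-$n$ bound one would get from treating each through strand independently.
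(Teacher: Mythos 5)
There is a genuine gap. Your reduction is fine: identifying the $n^2$ crossings of $y^n$ with a grid, letting $S$ be the set of $B$-resolved crossings, and noting that the length of the state sequence is $|S|$, the lemma does become the inequality $|S|\geq k^2$. But that inequality is the entire content of the lemma, and your proposal does not prove it — it only names a strategy (staircases, Hall-type matching, induction) and then flags the crucial step as ``the main obstacle.'' The specific picture you propose, namely $k$ through strands with pairwise disjoint supports each contributing $k$ cells of $S$, is not something you can take for granted: through strands generically \emph{share} $B$-resolved crossings (already for $n=k=2$, where all four crossings are $B$-resolved, each crossing is traversed by two of the four through strands), so extracting $k$ strands with disjoint supports requires a selection argument, and the claim that each selected strand meets at least $k$ distinct cells of $S$ is position-dependent (an outer through strand can ride state arcs of $A$-resolved crossings for much of its journey and needs fewer $B$-resolutions than an inner one). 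Neither point is established, so the $k^2$ bound is not obtained.

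The paper's proof avoids the disjointness problem entirely by partitioning the crossings rather than the through strands. One isotopes $y^n$ so that the dashed segments of $\sigma_A(y^n)$ lying between the same pair of state arcs occupy disjoint horizontal strips of the disk $[0,1]\times[0,1]$. Since each of the $2k$ through strands of $\sigma(y^n)$ runs from top to bottom, every horizontal line meets $\sigma(y^n)$ in at least $2k$ points. In the strip containing $\ell$ crossings, only $2(n-\ell)$ state arcs of $\sigma_A$ pass through vertically, and each crossing switched to the $B$-resolution adds two vertical passages; hence that strip must contain at least $k-(n-\ell)$ elements of $S$. Summing over the strips — one middle strip requiring $k$, and two strips requiring $k-j$ for each $j=1,\dots,k-1$ — gives $k+2(k-1)+2(k-2)+\cdots+2 = k^2$, with no double counting because the strips are disjoint. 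If you want to salvage your approach, you should replace the per-strand accounting with this per-strip accounting, or else supply the missing selection lemma and the position-dependent lower bound on how many $B$-crossings each selected strand must traverse.
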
 

\begin{proof} Viewing the disk $\dd^2$ as $[0, 1]\times [0, 1]$, consider $\sigma_A(y^n)$ in the disk and isotope $y^n$ so that each set of dashed segments between the same pair of state arcs in $\sigma_A(y^n)$ are in the same horizontal strip $[0, 1] \times [y_{i}, y_{i'}]$ without any overlap, see Figure \ref{f.xnsquare}.
\begin{figure}[H]
\def \svgwidth{.3\columnwidth}
\begingroup%
  \makeatletter%
  \providecommand\color[2][]{%
    \errmessage{(Inkscape) Color is used for the text in Inkscape, but the package 'color.sty' is not loaded}%
    \renewcommand\color[2][]{}%
  }%
  \providecommand\transparent[1]{%
    \errmessage{(Inkscape) Transparency is used (non-zero) for the text in Inkscape, but the package 'transparent.sty' is not loaded}%
    \renewcommand\transparent[1]{}%
  }%
  \providecommand\rotatebox[2]{#2}%
  \newcommand*\fsize{\dimexpr\f@size pt\relax}%
  \newcommand*\lineheight[1]{\fontsize{\fsize}{#1\fsize}\selectfont}%
  \ifx\svgwidth\undefined%
    \setlength{\unitlength}{444.79684905bp}%
    \ifx\svgscale\undefined%
      \relax%
    \else%
      \setlength{\unitlength}{\unitlength * \real{\svgscale}}%
    \fi%
  \else%
    \setlength{\unitlength}{\svgwidth}%
  \fi%
  \global\let\svgwidth\undefined%
  \global\let\svgscale\undefined%
  \makeatother%
  \begin{picture}(1,0.4568747)%
    \lineheight{1}%
    \setlength\tabcolsep{0pt}%
    \put(0,0){\includegraphics[width=\unitlength,page=1]{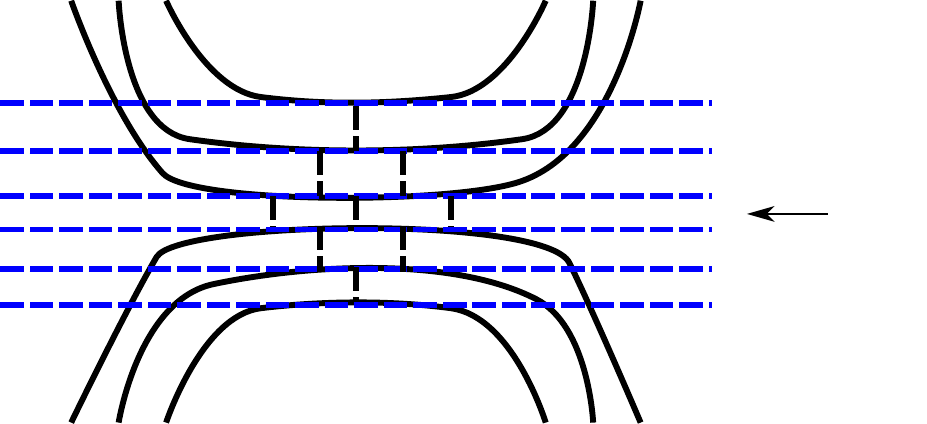}}%
    \put(0.910528,0.21596328){\color[rgb]{0,0,0}\makebox(0,0)[lt]{\lineheight{1.25}\smash{\begin{tabular}[t]{l}middle\end{tabular}}}}%
  \end{picture}%
\endgroup%

\caption{\label{f.xnsquare} The skein $\sigma_A(y^n)$ with  horizontal strips between state arcs.} 
\end{figure}  

We consider $\sigma(y^n)$ in the same disk. The intersection of any horizontal segment $[0, 1] \times y_0$ with $\sigma(y^n)$ has to contain $2k$ points since $\sigma(y^n)$ is assumed to have $2k$ through strands. For each horizontal strip that contains $0 \leq \ell \leq  n$ crossings, there are $2(n-\ell)$ state arcs providing part of the requisite $2k$ intersections.   We add up the remaining number of crossings necessary: $k, 2(k-1), 2(k-2), \ldots, 2$ to get $k^2$ crossings in $c_B(\sigma)$, where recall $c_B(\sigma)$ is the number of crossings on which $\sigma$ chooses the $B$-resolution. 
\end{proof}

\begin{remark} \label{r.throughe}
 Let  $y$ be a single crossing $\vcenter{\hbox{\includegraphics[scale=.1]{xoriented.pdf}}}$ viewed as an element of $TL_2$ and let $\sigma$ be a Kauffman state on $y^n$. Then a through strand of $\sigma(y^n)$ runs in a direction parallel on the projection plane to the dashed segment of Figure \ref{f.kstate}  on  $\sigma_A(y)$.
If $y$ is part of a skein element $\sk\in K(S^2)$ then
this dashed segment gives an edge
 $e_y$ of  ${\mathbb G}_A(\sk)$ of $D$. See Figure \ref{f.throughe}.

  \begin{figure}[H]
\begin{center} 
\def \svgwidth{.8\columnwidth}
\begingroup%
  \makeatletter%
  \providecommand\color[2][]{%
    \errmessage{(Inkscape) Color is used for the text in Inkscape, but the package 'color.sty' is not loaded}%
    \renewcommand\color[2][]{}%
  }%
  \providecommand\transparent[1]{%
    \errmessage{(Inkscape) Transparency is used (non-zero) for the text in Inkscape, but the package 'transparent.sty' is not loaded}%
    \renewcommand\transparent[1]{}%
  }%
  \providecommand\rotatebox[2]{#2}%
  \newcommand*\fsize{\dimexpr\f@size pt\relax}%
  \newcommand*\lineheight[1]{\fontsize{\fsize}{#1\fsize}\selectfont}%
  \ifx\svgwidth\undefined%
    \setlength{\unitlength}{641.09536959bp}%
    \ifx\svgscale\undefined%
      \relax%
    \else%
      \setlength{\unitlength}{\unitlength * \real{\svgscale}}%
    \fi%
  \else%
    \setlength{\unitlength}{\svgwidth}%
  \fi%
  \global\let\svgwidth\undefined%
  \global\let\svgscale\undefined%
  \makeatother%
  \begin{picture}(1,0.15491641)%
    \lineheight{1}%
    \setlength\tabcolsep{0pt}%
    \put(0,0){\includegraphics[width=\unitlength,page=1]{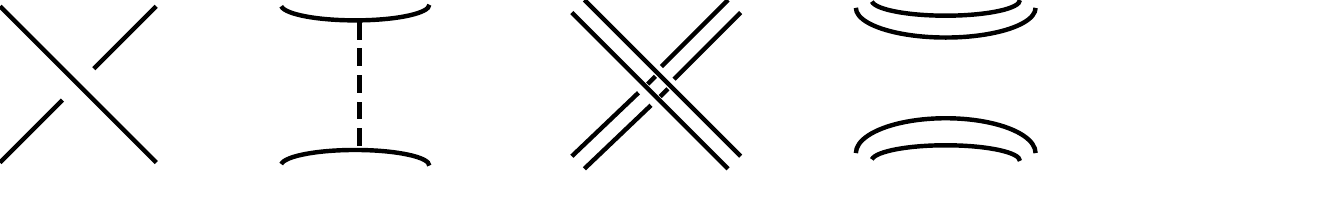}}%
    \put(0.29246819,0.05800199){\makebox(0,0)[lt]{\lineheight{1.25}\smash{\begin{tabular}[t]{l}$e_y$\end{tabular}}}}%
    \put(0.04211543,0.00983953){\makebox(0,0)[lt]{\lineheight{1.25}\smash{\begin{tabular}[t]{l}$y$\end{tabular}}}}%
    \put(0.46629472,0.00404894){\makebox(0,0)[lt]{\lineheight{1.25}\smash{\begin{tabular}[t]{l}$y^2$\end{tabular}}}}%
    \put(0.64472755,0.00322172){\makebox(0,0)[lt]{\lineheight{1.25}\smash{\begin{tabular}[t]{l}$\sigma_A(y^2)$\end{tabular}}}}%
    \put(0,0){\includegraphics[width=\unitlength,page=2]{throughe.pdf}}%
    \put(0.86826635,0.00322171){\makebox(0,0)[lt]{\lineheight{1.25}\smash{\begin{tabular}[t]{l}$\sigma(y^2)$\end{tabular}}}}%
    \put(0,0){\includegraphics[width=\unitlength,page=3]{throughe.pdf}}%
    \put(0.21655415,0.00907107){\makebox(0,0)[lt]{\lineheight{1.25}\smash{\begin{tabular}[t]{l}$\sigma_A(y)$\end{tabular}}}}%
  \end{picture}%
\endgroup%

\caption{\label{f.throughe} The through strands in $\sigma(y^2)$ come from changing the $A$-resolution to the $B$-resolution on the leftmost crossing in $y^2$.}
\end{center} 
\end{figure}

More generally, a $(2,2)$-tangle ${\mathcal T}$ that is part of a skein element  $\sk \in K(S^2)$, where ${\mathcal T}$ is with crossings and without Jones-Wenzl projectors,
 is viewed as an element in $ TL_2.$  On the  $n$-blackboard cable of $\sk^n$, we have the $n$-blackboard cable ${\mathcal T}^n$,
 which is an element in $ TL_{2n}$. 
A  Kauffman state $\sigma$ on $\sk^n$ restricted to ${\mathcal T}^n$  produces a crossingless skein element
$\sigma({\mathcal T}^n)$ without projectors, and it makes sense to talk about its through strands.

\begin{defn} \label{d.pass} With the notation and setting as above, we will say that a through strand of $\sigma({\mathcal T}^n)$  \emph{passes through} a crossing $y$ of $\sk$,
if it contains some state arcs of  $\sigma | y^n$.
\end{defn} 

Now recall that for a graph with edge set $E$ and vertex set $V$, where $v_0(e), v_1(e)$ denote the vertices of an edge $e\in E$,  a \emph{walk} is a sequence of edges $e_1, \ldots, e_m \in E$ such that $v_1(e_i) = v_0(e_{i+1})$. A walk is \emph{closed} if $v_0(e_1) = v_1(e_m)$.

If the skein element $\sk$ is a knot diagram $D=D(K)$, then for every through strand, say $\alpha$, of  $\sigma({\mathcal T}^n)$ in $\Dj^n$
we obtain a walk of edges on the all-$A$ state graph ${\mathbb G}_A$ of $D$. Namely, the walk consists of edges $\{e_y\}$ corresponding to all crossings $y\in \sk$ that $\alpha$ passes through.

The observation that we can view through strands of skein elements resulting from applying Kauffman states to tangles as walks on  ${\mathbb G}_A$ will be used in the proof of Theorem \ref{t.main}.
\end{remark}

\subsection{Proof of Theorem \ref{t.main}} Before we embark on the formal proof of the theorem we give a brief outline of it.
Starting with a knot diagram $D=D(K)$ that is not $A$-adequate, we fix a crossing $x$ that corresponds to a one-edged loop  $e$ of the state graph
${\mathbb G}_A={\mathbb G}_A(D)$. For fixed $n$ we view ${\Dj}^n$ as a sum of two $(2,2)$-tangles, one consisting of $x$ and the complimentary tangle ${\mathcal T}$.
Using fusion rules (Lemma \ref{l.Twistdegree}) on the fixed crossing $x$, we write
${\Dj}^n $ as a  sum of skein elements $\sk_{\sigma}(a)$ parametrized by pairs consisting of the fusion parameter,  $a$ of $x$, and Kauffman states $\sigma$.
We write $\sk_{\sigma}(a)=N(\mathcal{I}(a, -1, n) + \mathcal{T}_\sigma),$   where $\mathcal{I}(a, -1, n)$ are the skein elements of Definition \ref{notation} and
$ \mathcal{T}_\sigma \in  TL_{2n}$ are skein elements resulting from applying $\sigma | {\mathcal T}$.
Using  Lemmas \ref{l.dskein}, \ref{l.adequatebound}  we are able to  isolate the states $\sigma$ that contribute to the maximum degree $\deg \langle \Dj^n \rangle$.
To estimate the degree  contributions of such a state $\sigma$, we view the through strands of the skein elements $\mathcal{T}_\sigma$ as closed  walks on 
${\mathbb G}_A$ starting and ending at the vertex containing the one-edged loop $e$. 
This allows us to relate the number of through strands of $\mathcal{T}_\sigma$ 
to the number of crossings on which $\sigma$ chooses the $B$-resolution. This, in turn, combined with Lemma \ref{l.xoriented}, allows us to estimate that the drop  of the degree $\deg \langle \Dj^n \rangle$
from the potential maximum $H_n(D)$, grows quadratically in $n$. See  \eqref{e.maininequality} below for the precise statement.

\begin{proof} 

Given a knot diagram $D$ of $K$, it suffices to show that if $D$ is not $A$-adequate, then  
\begin{equation} \label{e.maininequality}
\deg \langle \Dj^n \rangle \leq  H_n(D)  - \rho n^2 + O(n), 
\end{equation} 
for a constant $\rho > 0$ depending on the diagram. The statement of the theorem then follows from \eqref{e.maininequality} passing to $h_n(D)$ using Equation \eqref{e.h_n} with $p_-(D)= \frac{1}{4}\rho$. 

\begin{figure}[H]
\begin{center} 
\def \svgwidth{.8\columnwidth}
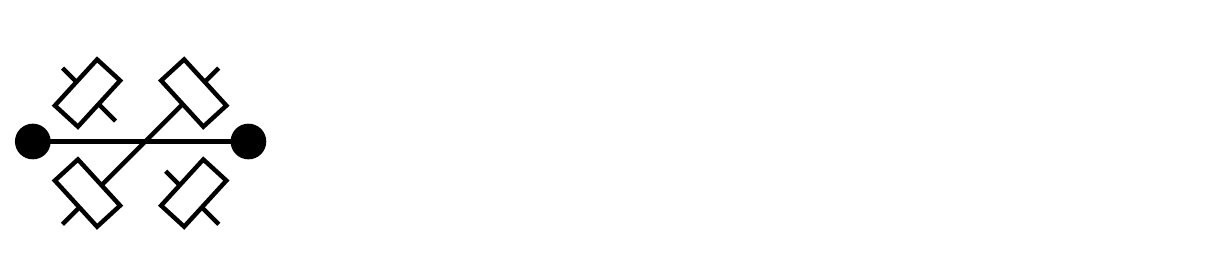
\caption{\label{f.fusingonepos} The crossing corresponding to $e$  framed with  Jones-Wenzl projectors and fused along a direction transverse to $e$. 
}
\end{center} 
\end{figure}

Since $D$ is not $A$-adequate,  $\mathbb{G}_A$  has  one-edged loops. Fix a crossing   $x$ whose $A$-resolution gives a one-edged loop $e$ with vertex $v$ in $\mathbb{G}_A$.
Take the $n$-blackboard cable of $D$ and decorate with a Jones-Wenzl projector  as in Definition \ref{d.cjp}.  
Double and slide the projectors along the $n$-cable of the knot using its defining properties from Definition \ref{d.jw}, so that there are four projectors framing $x^n$. Let $\Dj^n$ be the resulting diagram. 

 To compute $\langle \Dj^n \rangle$ first apply Lemma \ref{l.Twistdegree} (with $r=-1$)  for the twist region $T$ consisting of the crossing $x$. See Figure \ref{f.fusingonepos} for the illustration, where since $e$ is a one-edged loop, its endpoints are on a single vertex $v$. Note that we are choosing a direction transverse to $e$ for the fusion. 
We have
\begin{equation} \label{e.expansion}
\langle \Dj^n \rangle 
=  \sum_{\substack{a: (a, n, n) \ \text{admissible,} \\ 0\leq a\leq 2n}} \frac{\triangle_n}{\theta(n, n, a )} (-1)^{n-\frac{a}{2}} A^{-(2n-a+ n^2-\frac{a^2}{2})} \langle \sk(a) \rangle,
\end{equation}
where  $\sk(a)$ denotes the skein element corresponding to the parameter $a$ in the sum of Figure \ref{f.fusingonepos}.
Next,  we expand the sum of \eqref{e.expansion} over all Kauffman states $\sigma$ on the crossings of each $\sk(a)$ to get  

\begin{align} \label{e.expansion2}
\langle \Dj^n \rangle &= \sum_{\substack{\sigma \text{ Kauffman state on $\sk(a)$}, \\  a: (a, n, n) \ \text{admissible,} \\ 0\leq a \leq 2n}} I(a, -1, n)   A^{\sgn(\sigma)}  \langle \mathcal{S}_{\sigma}(a) \rangle,
\end{align} 
where  $\sk_\sigma(a)$ denotes the skein element resulting from applying a Kauffman state $\sigma$ to the crossings of $\sk(a)$. Finally, as in Lemma \ref{l.Twistdegree}, $I(a, -1, n): = \frac{\triangle_n}{\theta(n, n, a )} (-1)^{n-\frac{a}{2}}  A^{-(2n-a+ \frac{2n^2-a^2}{2})}$. See \cite[Proposition 5.1]{Lickorishbook}\footnote{Proposition 5.1 in \cite{Lickorishbook} extends to the case of a skein element with crossings and decorated with projectors with identical proof. }. 
As defined earlier in Definition \ref{d.stategraphsign},  $\sgn(\sigma)$ denotes the number of crossings of $\Dj^n$ for which $\sigma$ assigns the $A$-resolution minus the number of crossings for which $\sigma$ assigns the $B$-resolution.

Note that  $\sk(a) = N(\mathcal{I}(a, -1, n) +  \mathcal{T}$) where $\mathcal{I}(a, -1, n)$ is the skein element of Definition \ref{notation} for $r=-1$, and $ \mathcal{T}$ is the diagram $\Dj^n$ with $T^n$ removed.   
Similarly, $$\sk_{\sigma}(a)=N(\mathcal{I}(a, -1, n) + \mathcal{T}_\sigma),$$ 
where $\mathcal{T}_\sigma \in TL_{2n}$ denotes the complement of $\mathcal{I}(a, -1, n)$ in  $\sk_\sigma(a)$. 

The through strands of $\mathcal{T}_\sigma$ are the strands that run in a direction parallel, on the projection plane, to the edge of $\mathcal{I}(a, -1, n)$ labeled by $a$.
Since $\mathcal{T}_\sigma\in TL_{2n}$, its number of through strands is even. Let $k_{\sigma}$  denote half of this number.

\begin{figure}[H]
\def \svgwidth{.3\columnwidth}
\begingroup%
  \makeatletter%
  \providecommand\color[2][]{%
    \errmessage{(Inkscape) Color is used for the text in Inkscape, but the package 'color.sty' is not loaded}%
    \renewcommand\color[2][]{}%
  }%
  \providecommand\transparent[1]{%
    \errmessage{(Inkscape) Transparency is used (non-zero) for the text in Inkscape, but the package 'transparent.sty' is not loaded}%
    \renewcommand\transparent[1]{}%
  }%
  \providecommand\rotatebox[2]{#2}%
  \newcommand*\fsize{\dimexpr\f@size pt\relax}%
  \newcommand*\lineheight[1]{\fontsize{\fsize}{#1\fsize}\selectfont}%
  \ifx\svgwidth\undefined%
    \setlength{\unitlength}{558.80950207bp}%
    \ifx\svgscale\undefined%
      \relax%
    \else%
      \setlength{\unitlength}{\unitlength * \real{\svgscale}}%
    \fi%
  \else%
    \setlength{\unitlength}{\svgwidth}%
  \fi%
  \global\let\svgwidth\undefined%
  \global\let\svgscale\undefined%
  \makeatother%
  \begin{picture}(1,0.72877772)%
    \lineheight{1}%
    \setlength\tabcolsep{0pt}%
    \put(0,0){\includegraphics[width=\unitlength,page=1]{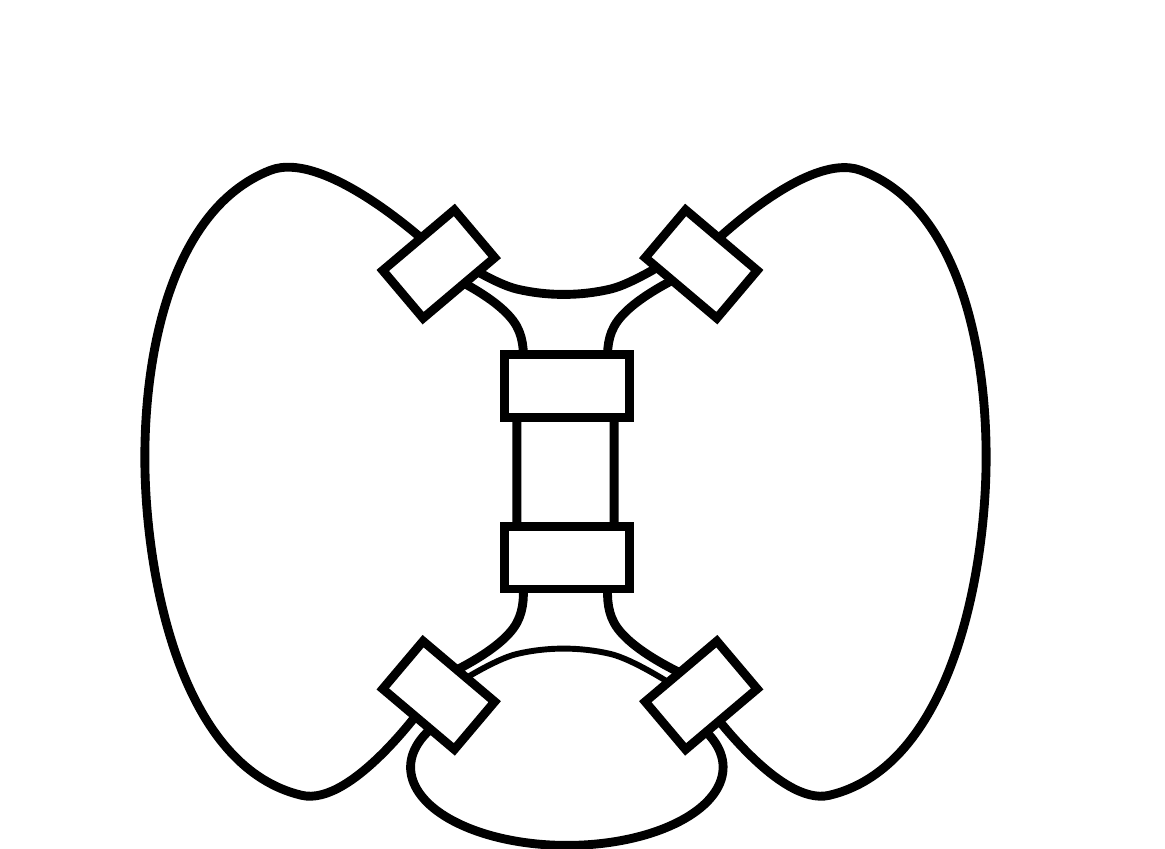}}%
    \put(0.30438242,0.30333238){\color[rgb]{0,0,0}\makebox(0,0)[lt]{\lineheight{1.25}\smash{\begin{tabular}[t]{l}$\frac{a}{2}$\end{tabular}}}}%
    \put(0.63186452,0.30333238){\color[rgb]{0,0,0}\makebox(0,0)[lt]{\lineheight{1.25}\smash{\begin{tabular}[t]{l}$\frac{a}{2}$\end{tabular}}}}%
    \put(-0.00162525,0.34091231){\color[rgb]{0,0,0}\makebox(0,0)[lt]{\lineheight{1.25}\smash{\begin{tabular}[t]{l}$ k_\sigma$\end{tabular}}}}%
    \put(0.86616367,0.33899497){\color[rgb]{0,0,0}\makebox(0,0)[lt]{\lineheight{1.25}\smash{\begin{tabular}[t]{l}$ k_\sigma$\end{tabular}}}}%
    \put(0,0){\includegraphics[width=\unitlength,page=2]{skeinJ.pdf}}%
  \end{picture}%
\endgroup%

\caption{\label{3.11}The skein element $\mathcal{J}_\sigma(a)$. } 
\end{figure}

Since the component, say $\mathcal{J}_\sigma(a)$,  of $\sk_\sigma(a)$ decorated with Jones-Wenzl projectors contains $\mathcal{I}(a, -1, n)$, Lemma \ref{l.dskein} applies to conclude that $\langle J_{\sigma}(a) \rangle$ and therefore $\langle \sk_\sigma(a) \rangle$ is 0 unless $\frac{a}{2} \leq k_{\sigma}$. See Figure \ref{3.11} for an illustration, where the dashed circle indicates the skein element of a disk where Lemma \ref{l.dskein} applies.

Thus, we can rewrite the sum in Equation (\ref{e.expansion2}) as
\begin{align}
&\langle \Dj^n \rangle = \sum_{\substack{\sigma \text{ Kauffman state on $\sk(a)$}, \\  a: (a, n, n) \ \text{admissible,} \\ 0\leq a \leq 2n,  \frac{a}{2} \leq k_{\sigma} }} I(a, -1, n) A^{\sgn(\sigma)}  \langle \mathcal{S}_{\sigma}(a) \rangle.  \label{e.dn}
\end{align}

Now we consider the degree of each term in  Equation \eqref{e.dn}.
We have
$$\deg \left( I(a, -1, n) A^{\sgn(\sigma)}  \langle \sk_{\sigma}(a) \rangle \right) = d(a, -1, n) + \sgn(\sigma) + \deg \langle \sk_{\sigma}(a) \rangle,$$
where, by Lemma \ref{l.Twistdegree}, $d(a, -1, n) = \deg I(a, -1, n)=-4n + 2a - n^2 + \frac{a^2}{2}$.

Lemma \ref{l.adequatebound}, which  says that  $\deg \langle \sk_\sigma(a) \rangle \leq \deg \langle \overline{\sk_\sigma(a)} \rangle$, gives us
$$\deg \left( I(a, -1, n) A^{\sgn(\sigma)}  \langle \sk_{\sigma}(a) \rangle \right) \leq  d(a, -1, n) + \sgn(\sigma) + \deg \langle \overline{\sk_{\sigma}(a)} \rangle,$$
where, as defined in Definition \ref{d.barS}, $\overline{\sk_{\sigma}(a)}$ is the skein element obtained from $\sk_{\sigma}(a)$ by replacing every Jones-Wenzl projector by the identity element. 

For fixed $n$,  clearly $ d(a, -1, n)$ increases monotonically in $a$. Similarly for fixed $n$ and $\sigma$, the function $\deg \langle \overline{\sk_{\sigma}(a)} \rangle$ increases monotonically in $a$, since $ \overline{\sk_{\sigma}(a)}$ is just a collection of disjoint circles whose number is determined by $\overline{\mathcal{J}_\sigma(a)}$, and  the number of circles increases with $a$. See Figure \ref{f.countcircles}.
\begin{figure}[H]
\def \svgwidth{.15\columnwidth}
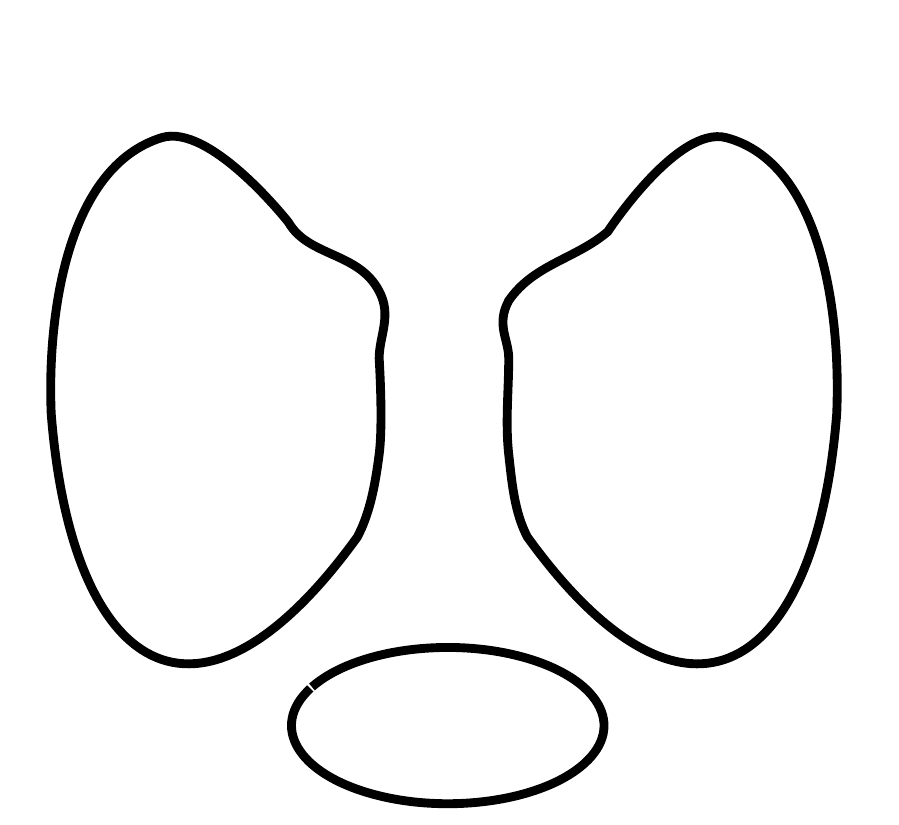 
\caption{\label{f.countcircles}  The crossingless skein $\overline{J_\sigma(a)}$.  }
\end{figure} 
Thus, since we work with $\frac{a}{2} \leq k_{\sigma}$, we can isolate the terms that contribute to the highest degree of the sum \eqref{e.dn}, to rewrite

\begin{equation}
\langle \Dj^n \rangle =
 \sum_{\substack{\sigma} }\sum_{a \leq 2k_{\sigma}} I(a =2k_{\sigma} , -1, n) A^{\sgn(\sigma)}  \langle \overline{\mathcal{S}_{\sigma}(a=2k_{\sigma})} \rangle 
+ \text{ lower degree order terms.}
\label{high}
\end{equation}

 We will distinguish two cases:
 \vskip 0.05in
 
 \noindent\underline{ {\bf Case 1.}}
Suppose we have  $k_\sigma = 0$, for all $\sigma$ of Equation \eqref{high}. Then the only nonzero term in the highest degree terms of the sum of \eqref{high}  is when $a = 2k_\sigma = 0$. In this case, applying Corollary \ref{c.ub} we have
\begin{align*}
 \deg\left( I(0, -1, n) A^{\sgn(\sigma)}  \langle \overline{\mathcal{S}_{\sigma}(0)}\rangle \right) &\leq   \deg\left( I(0, -1, n) A^{\sgn(\sigma_A)}  \langle \overline{\mathcal{S}_{\sigma_A}(0)}\rangle \right).
 \intertext{Now we compute}
 \deg\left( I(0, -1, n) A^{\sgn(\sigma_A)}  \langle \overline{\mathcal{S}_{\sigma_A}(0)}\rangle \right)&= \deg(I(0, -1, n)) + \sgn(\sigma_A) + \deg \langle \overline{\sk_{\sigma_A}(0)} \rangle= \\ 
  &= d(0, -1, n) + (c(D)-1)n^2 + 2v_A(D) n + 2n =\\ 
  &= -4n -n^2 + (c(D)-1)n^2 + 2v_A(D) n + 2n. \\ 
 \intertext{With $H_n(D) = d(2n, -1, n) +  \sgn(\sigma_A) + \deg\langle \overline{\sk_A(2n)}\rangle$ we have  } 
 H_n(D) - \deg \langle \Dj^n \rangle &\geq 2n^2 + 2n. 
\end{align*}
The statement of the theorem follows from setting $\rho = 2$. 

 \vskip 0.05in
 
 \noindent\underline{ {\bf Case 2.}}
Suppose now that there exists some $\sigma$ for which $k_\sigma \not=0$. 
Discarding the lower degree terms and replacing the parameter $a$ by $2k_{\sigma}$ in \eqref{high},
we have
\begin{equation}
\deg \langle \Dj^n \rangle  \leq   \max_{\substack{\sigma } }\{ d(2k_\sigma, -1, n)+  \sgn(\sigma) + \deg \langle \overline{\sk_\sigma(2k_{\sigma})} \rangle\}.
\label{e.max}
\end{equation}

Let $v\in \mathbb{G}_A$ denote the vertex to the one edge loop $e$
corresponding to the crossing $x$ we fixed at the start of the proof.
This vertex corresponds to a state circle in $\sigma_A(\overline{\Dj^n})$ where, using the notation
of   Definition \ref{d.barS}, we have $\overline{\Dj^n} = D^n$.  
Since any $\sk_A(a)$ is obtained by applying the all-$A$ Kauffman state outside the fused edge, the number of through strands $k_{\sigma_A}$ is zero.
Thus we can only have a non-zero number of through strands in $\sk_\sigma(a)$, where $\sigma \not= \sigma_A$.
In other words, through strands come from state arcs of  $\sigma(\Dj^n)$, created from the choice of the $B$-resolution on a crossing by  $\sigma$. By Remark \ref{r.throughe}, to every though strand we can associate a finite closed walk in $\mathbb{G}_A$ starting and ending at $v$: The sequence of edges in the walk are the edges $\{e_y\}_y$, correspond to crossings $\{y\}$ that the  strand passes through, in the sense of Definition \ref{d.pass}.  Moreover, an edge in a walk coming from a through strand can repeat at most $2n$ times, since the maximal number of through strands possible for any state $\sigma$ on $y^n$ is $2n$.

To facilitate exposition we will use $\Sigma_{\max}$ to denote the set of states that contribute to the right hand side of (\ref{e.max}).
For $\sigma \in \Sigma_{\max}$ and a crossing $y$ of $D$, let $k_y$ 
denote half the number of through strands  of the skein  $\sigma(y^n)$  obtained by restricting
$\sigma|y^n$.  Also let $ \sigma_A|y^n$ denote the restriction of the all-$A$ state on $y^n$.

By Lemma \ref{l.xoriented}, since $\sigma(y^n)$ has $2k_y$ through strands, a sequence of states from $\sigma_A|y^n$ to $\sigma|y^n$
contains a subsequence of length $k^2_y$. This means that the number of crossings on which $\sigma$ chooses the $B$-resolution in $y^n$ is at least $k_y^2$, and therefore, 
\begin{align} \label{e.xlowerbound} 
\sgn(\sigma_A|y^n) - \sgn(\sigma|y^n) \geq 2k_y^2.
\end{align} 

Recall  $H_n(D): = \sgn(\sigma_A) + d(2n, -1, n) + \deg \langle \overline{ \sk_A(2n)}\rangle$. For $\sigma \in \Sigma_{\max}$, by   \eqref{e.max}, we have
 
 \begin{align}
  \label{e.10}
&H_n(D) - \deg \langle \Dj^n \rangle \geq  d(2n, -1, n)-d(2k_{\sigma}, -1, n)+ \\
 &+ \big( \sgn(\sigma_A)- \sgn(\sigma)\big)+\big(\deg \langle \overline{ \sk_A(2n)}\rangle- \deg \langle \overline{\sk_\sigma(2k)}\rangle\big).\notag 
 \end{align} 

Consider the set $W=\{w_1, \cdots, w_s\}$ of closed walks on $\mathbb{G}_A$  from $v$ to $v$,  such that the number of times that each edge of $\mathbb{G}_A$  appears in a walk in $W$
is at most $2n$, and the walk consisting only of $e$ is not in $W$. Let $s$ denote the cardinality of  $W$.

Given  $\sigma \in \Sigma_{\max}$, for $i = 1\leq \cdots \leq s$,  let $k_i$ denote half the number of through strands of  $\mathcal{T}_\sigma$ corresponding to  the walk $w_i$. Here $k_i$ is not necessarily an integer. The set $\{2k_1, \ldots, 2k_s \}$ gives a nonnegative integer partition of $2k_\sigma$ into $s$ parts.  For $i = 1 \leq \cdots \leq s$, pick an edge on the  walk $w_i$. This gives a  set of crossings $\{y_1, \ldots, y_s \}$ on the knot diagram $D$. Since we assume $k_\sigma \not=0$, we have $s > 0$. Now remove any repeated edges $y_j$ and renumber, to get a set of $s'$ distinct edges $\{y_1, \ldots, y_{s'} \}$.   

For $j=1\leq \cdots\leq s'$, set $k'_j=k_{y_j}$ and recall that a sequence of states from $\sigma_A$ to $\sigma$ must contain a subsequence of length at least $(k'_j)^2$.
Note that by our choice of  the set $\{y_1, \ldots, y_{s'}\}$ the $s'$ subsequences we get this way are distinct. Moreover, every through strand of $\mathcal{T}_\sigma$ is contained in $y_j$ for some $j$, so $k' = \sum k_j' \geq k_\sigma$.

Now using \eqref{e.xlowerbound}  we get
 $$\sgn(\sigma_A) - \sgn(\sigma) \geq 2\sum_{j=1}^{s'} (k'_j)^2.$$ Furthermore,  since for each $j=1\leq \cdots\leq s'$ the corresponding subsequence can create at most $2k'_j$ new state circles,
we obtain $\deg \langle \overline{\sk_{A}(2n)}\rangle - \deg \langle \overline{\sk_\sigma(2k)} \rangle \geq -2\sum_{j=1}^{s'} k'_j$. 
Finally, by Lemma  \ref{l.Twistdegree},  we have $d(2k_{\sigma}, -1, n) = -4n + 4k_{\sigma} - n^2 + 2k_{\sigma}^2$.
With these observations  at hand,  (\ref{e.10}) leads to
 \begin{align} \label{e.finequality}
  &H_n(D) - (d(2k_{\sigma}, -1, n) + \sgn(\sigma) + \deg \langle \overline{\sk_\sigma(2k_{\sigma})}\rangle)\geq \\
 &\geq  2(n^2-k_{\sigma}^2) +4(n-k_{\sigma}) + \sum_{j=1}^{s'} 2((k'_j)^2 - k'_j). \notag
 \end{align}

Now $\{k'_1, \ldots, k'_{s'}\}$ is a nonnegative integer partition of  $k': = \sum_{j=1}^{s'} k'_j$.
Applying Lemma \ref{elementarym} to $k'$ and $s'$ we get a minimal partition of $k'$ into $s'$ part as in \eqref{e.minpartition}.
Then, applying Lemma \ref{l.partition}
to compare the resulting minimal partition and  to $\{k'_1 \ldots k'_{s'}\}$, 
we have
$$
b ( k'/s' + 1 - b/s')^2 + (s-b) (k'/s'- b/s')^2 \geq (k')^2/s' = \sum_{j=1}^{s'}  (k'/s')^2.  $$
Now from  \eqref{e.finequality} we obtain
\begin{align}
&H_n(D) - (d(2k_{\sigma}, -1, n) + \sgn(\sigma) + \deg \langle  \overline{\sk_\sigma(2k_{\sigma})}\rangle)\geq \notag   \\ 
   &\geq  2(n^2-k_{\sigma}^2) +4(n-k_{\sigma}) + 2\sum_{j=1}^{s'} \left(\frac{k'}{s'}\right)^2-  2\sum_{j=1}^{s'} k'_j =\notag \\ 
      &=  2(n^2-k_{\sigma}^2) +4(n-k_{\sigma}) + 2\frac{(k')^2}{s'}-  2\sum_{j=1}^{s'} k'_j.\notag 
 \end{align}
 
 Recall $k'\geq k_\sigma$ by construction of the set $\{y_1, \ldots, y_{s'}\}$. Also, $s\geq s'$ and $ns'\geq k'$ since $k'_j \leq n$. Hence we get
\begin{align}   &H_n(D) - (d(2k_{\sigma}, -1, n) + \sgn(\sigma) + \deg \langle \overline{\sk_\sigma(2k_{\sigma})}\rangle) \geq \notag \\ 
  &\geq 2(n^2-k_{\sigma}^2) +4(n-k_{\sigma}) + \frac{2k_{\sigma}^2}{s}- 2ns \notag=\\
  &=\big(\frac{2-2s}{s}\big)k_{\sigma}^2-4k_{\sigma}+(2n^2+4n-2ns). \notag
  \end{align}
 
Denote the quantity on the right hand side of the  last inequality   
 by $g(k)$, where $k:=k_{\sigma}$.

By assumption $s\geq 1 $. Since $s$   is the cardinality of a set of walks on $ \mathbb{G}_A$ it is independent from $\sigma$. Thus for  fixed $n$, $g(k)$ is  a function of $k$, where $0\leq k\leq n$.

If $s=1$, then $g(k)$ is a linear function in $k$ with negative derivative -4, hence on $[0, n]$ it achieves its absolute minimum $2n^2 + 2n$ on $n$.

Otherwise assume $s\geq 2$. Since the critical point of $g(k)$ is $\frac{s}{1-s}<0$, the absolute minimum in $[0, n]$  is achieved at $k=0$ or  $k=n$.  Hence
$g(k)\geq \min\{ 2n^2-2ns, \ \frac{2n^2}{s} -2ns\}.$
Thus  setting $\rho = \frac{2}{s}>0$ we have that $g(k) \geq \rho n^2 + O(n)$ and the conclusion follows.
\end{proof}


\section{Applications to crossing numbers}
Determining the crossing number of  an arbitrary  knot $K$ is a hard task as there are no general methods for it other than a brute-force search  that would attempt to classify knots  that admit diagrams with crossings less than or equal these of a  diagram for $K$.
Such methods have been used successfully to compile tables of knots with low crossing numbers \cite{Hoste} but become hopeless for arbitrary knots.
Although there has been some progress in understanding  the behavior of the crossing number under  the operations of taking  connected sums or forming satellites of knots \cite{Lackenby},
fundamental questions in this direction still remain out of reach \cite[Problems 1.67, 1.68]{Kirby}.

On the other hand  the crossing numbers for broad families of knots that admit particular types of diagrams are well understood.
In particular, it is known that  adequate  diagrams realize the crossing number  of the knots they represent, and that the crossing number of adequate knots is additive under connected sums \cite{Murasugi, spanning, Kauffmanstates}. In addition, it is known  that the ``standard" diagrams of Montesinos knots
and torus knots minimize their crossing number \cite{LickorishThistlethwaite, Lickorishbook, Adams}.
As a Corollary of Theorem \ref{main} we obtain the following  criterion that allows to determine the crossing number of non-adequate knots that admit diagrams  with the number of crossings ``close enough"  to their Jones diameter.
\begin{cor}\label{criterion} Suppose $K$ is a non-adequate knot admitting a diagram $D=D(K)$ such that  $jd_K=2(c(D)-1)$. Then we have $c(D)=c(K)$.
\end{cor}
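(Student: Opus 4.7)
The proof is essentially a direct application of Theorem \ref{main}, and the argument proceeds by contradiction. Assume for contradiction that $c(K) < c(D)$; since crossing numbers are integers this means $c(K) \leq c(D) - 1$.

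The general upper bound from Theorem \ref{main} (the inequality part, known from Kauffman--Murasugi--Thistlethwaite) applied to any minimal crossing diagram of $K$ gives $dj_K \leq 2c(K)$. Combining this with our assumed inequality $c(K) \leq c(D) - 1$ yields
\[
dj_K \;\leq\; 2c(K) \;\leq\; 2(c(D)-1).
\]
The hypothesis of the corollary is $dj_K = 2(c(D)-1)$, so all the inequalities above must be equalities. In particular $dj_K = 2c(K)$.

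Now I invoke the sharpness part of Theorem \ref{main}: if $dj_K = 2c(K)$, then $K$ must be adequate. This contradicts the hypothesis that $K$ is non-adequate. Therefore the assumption $c(K) < c(D)$ was false, and since trivially $c(K) \leq c(D)$ (as $D$ is a diagram representing $K$), we conclude $c(D) = c(K)$.

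There is no real obstacle here; the entire content of the corollary is extracting from Theorem \ref{main} the observation that for non-adequate $K$, any diagram whose crossing count is only one more than $dj_K/2$ is already forced to realize the crossing number, because a strictly smaller diagram would push $dj_K$ up to the maximal value $2c(K)$ and thereby force adequacy. The argument uses both directions of Theorem \ref{main} (the inequality $dj_K \leq 2c(K)$ to bound $dj_K$ from a hypothetical minimal diagram, and the equality-implies-adequate direction to derive the contradiction).
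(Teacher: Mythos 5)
Your proof is correct and is essentially the paper's argument: both rest on the inequality $dj_K \leq 2c(K)$, the fact that equality forces adequacy, and integrality of crossing numbers. The paper simply phrases it directly (non-adequacy gives $c(D) \geq c(K) > dj_K/2 = c(D)-1$) rather than by contradiction, which is an immaterial difference.
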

\begin{proof} Since $K$ is non-adequate, Theorem \ref{main} gives $$c(D)\geq c(K)> \frac{ jd_K}{2}=c(D)-1,$$ and the result follows.
\end{proof}

Next we will discuss  lower bounds for the  crossing number of Whitehead doubles of adequate and torus knots.  Using Corollary \ref{criterion} we will 
determine the crossing numbers of infinite families of Whitehead doubles.

\subsection{Doubles of adequate knots} 
Let $V$ be a standard solid torus in $S^3$, with preferred meridian-longitude  pair $(\mu_V, \lambda_V)$ and with $U_{\pm}$ a copy of a $\pm$-clasped unknot as shown in Figure \ref{f.WD}.
Given $K\subset S^3$ with a torus neighborhood $V_K$ and preferred meridian-longitude  pair $(\mu_K, \lambda_K)$,
take an embedding $f: V \longrightarrow S^3$ with $f(V)=V_K$, $f(\mu_V)=\mu_K$ and $f(\lambda_V)=\lambda_K$.
Then $W_{\pm}(K):=f(U_{\pm})$ is  the untwisted (positive/negative) Whitehead double of $K$.

We recall that if $D=D(K)$ is an adequate diagram, and with the notation of Definition \ref{d.stategraph}, the quantities $c(D)$, $c_{\pm}(D)$
as well as the Turaev genus $g_T(D)$ are minimal over all diagrams representing $K$
 \cite{Lickorishbook,  Kauffmanstates, Murasugi, alternating, Abe}.
We denote them by
$c(K)$, $c_{\pm}(K)$, and $g_T(K)$, respectively.
Furthermore,  the  writhe number of $D$ is known to be an invariant of $K$ and is denoted by
 $\mathrm{wr}(K)$.
 
In this section we prove the following result which, as we will explain later on, implies in particular Theorem \ref{doubleintro} stated in the Introduction. 

\begin{figure}[H]
\begin{center}
\def \svgwidth{.45\columnwidth}
\begingroup%
  \makeatletter%
  \providecommand\color[2][]{%
    \errmessage{(Inkscape) Color is used for the text in Inkscape, but the package 'color.sty' is not loaded}%
    \renewcommand\color[2][]{}%
  }%
  \providecommand\transparent[1]{%
    \errmessage{(Inkscape) Transparency is used (non-zero) for the text in Inkscape, but the package 'transparent.sty' is not loaded}%
    \renewcommand\transparent[1]{}%
  }%
  \providecommand\rotatebox[2]{#2}%
  \newcommand*\fsize{\dimexpr\f@size pt\relax}%
  \newcommand*\lineheight[1]{\fontsize{\fsize}{#1\fsize}\selectfont}%
  \ifx\svgwidth\undefined%
    \setlength{\unitlength}{656.99375423bp}%
    \ifx\svgscale\undefined%
      \relax%
    \else%
      \setlength{\unitlength}{\unitlength * \real{\svgscale}}%
    \fi%
  \else%
    \setlength{\unitlength}{\svgwidth}%
  \fi%
  \global\let\svgwidth\undefined%
  \global\let\svgscale\undefined%
  \makeatother%
  \begin{picture}(1,0.54248883)%
    \lineheight{1}%
    \setlength\tabcolsep{0pt}%
    \put(0,0){\includegraphics[width=\unitlength,page=1]{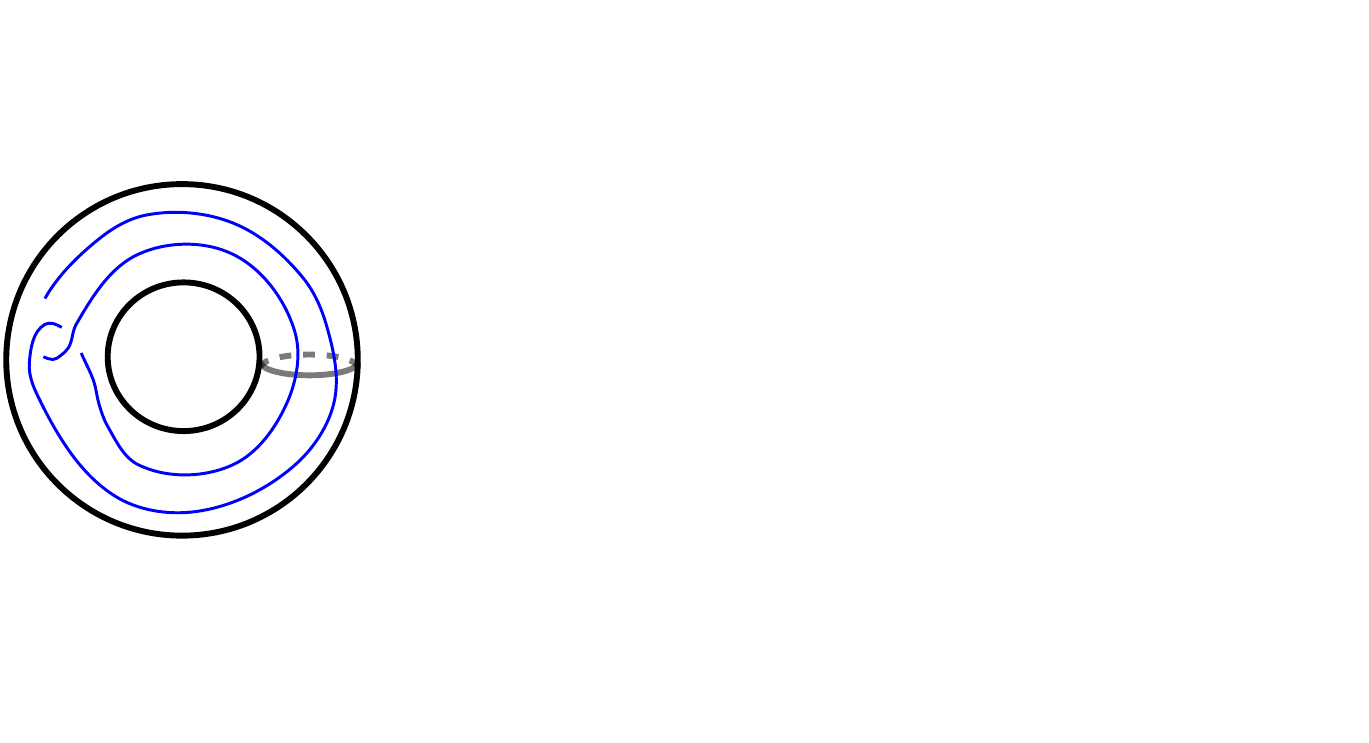}}%
    \put(0.01571232,0.12095826){\makebox(0,0)[lt]{\lineheight{1.25}\smash{\begin{tabular}[t]{l}$V$\end{tabular}}}}%
    \put(0.2054247,0.12107347){\makebox(0,0)[lt]{\lineheight{1.25}\smash{\begin{tabular}[t]{l}$U_{+}$\end{tabular}}}}%
    \put(0.27941346,0.24843197){\makebox(0,0)[lt]{\lineheight{1.25}\smash{\begin{tabular}[t]{l}$\mu_V$\end{tabular}}}}%
    \put(0,0){\includegraphics[width=\unitlength,page=2]{WD.pdf}}%
    \put(0.34219942,0.30551014){\makebox(0,0)[lt]{\lineheight{1.25}\smash{\begin{tabular}[t]{l}$f$\end{tabular}}}}%
    \put(0,0){\includegraphics[width=\unitlength,page=3]{WD.pdf}}%
  \end{picture}%
\endgroup%
 
\caption{\label{f.WD}The positive Whitehead double of the figure-8 knot. By Theorem \ref{doubleadequate} the diagram shown is a minimal crossing diagram. }
\end{center}  
\end{figure}

\begin{thm}\label{doubleadequate} Suppose that $K$ is an adequate knot with crossing number $c(K)$ and writhe $\mathrm{wr}(K)$.
Suppose moreover that $c_+(K), c_-(K)\neq 0$ and let $W_{-}(K)$ (resp. $W_{+}(K)$) denote the negative (resp. positive) untwisted Whitehead double of $K$. 
Then, the crossing number $c(W_{\pm}(K))$ satisfies the following inequalities.
$$4c(K)+1\leq c(W_{\pm}(K))\leq   4c(K)+2+2|\mathrm{wr}(K)|.$$

Furthermore, if $\mathrm{wr}(K)=0$ 
 we have $c(W_{\pm}(K))=4 c(K)+2$ and the diagram $W_{\pm}(D)$, formed by doubling  an adequate diagram $D=D(K)$ using the blackboard framing of $D$, is a minimum
crossing diagram for $W_{\pm}(K)$.
\end{thm}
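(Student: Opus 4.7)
The plan is to separate the theorem into three tasks: an upper bound by explicit construction, a lower bound via the Jones diameter, and a matching argument in the zero-writhe case using Corollary~\ref{criterion}.

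For the upper bound, I would start with an adequate diagram $D=D(K)$ realizing $c(K)$ and form its blackboard parallel with a $\pm$-clasp inserted along one arc. Each crossing of $D$ is promoted to a $(2,2)$-tangle of $4$ crossings in the parallel and the clasp contributes $2$ more, giving $4c(K)+2$ crossings in the blackboard-framed Whitehead double. Since this diagram represents $W_\pm(K)$ with framing $\mathrm{wr}(K)$ rather than $0$, I would insert $|\mathrm{wr}(K)|$ compensating full twists in a pair of parallel strands; each full twist adds $2$ crossings. The resulting diagram of $W_\pm(K)$ has $4c(K)+2+2|\mathrm{wr}(K)|$ crossings, yielding the right-hand inequality. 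When $\mathrm{wr}(K)=0$ no compensating twists are needed, so $W_\pm(D)$ has exactly $4c(K)+2$ crossings.

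For the lower bound I would invoke the computation of Baker--Motegi--Takata \cite{BMT} for the Jones slopes of untwisted Whitehead doubles of adequate knots. Their formula, combined with the hypotheses $c_+(K), c_-(K)\neq 0$, should give $dj_{W_\pm(K)}=8c(K)+2$. Theorem~\ref{main} then yields
\[ c(W_\pm(K))\;\geq\;\tfrac{1}{2}\,dj_{W_\pm(K)}\;=\;4c(K)+1, \]
which is the left-hand inequality.

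For the zero-writhe statement the two bounds localize $c(W_\pm(K))$ to $\{4c(K)+1,\,4c(K)+2\}$. To conclude that the larger value is attained, I would apply Corollary~\ref{criterion} to the diagram $D':=W_\pm(D)$: since $dj_{W_\pm(K)}=8c(K)+2=2(c(D')-1)$, Corollary~\ref{criterion} gives $c(W_\pm(K))=c(D')=4c(K)+2$ \emph{provided} $W_\pm(K)$ is non-adequate. Thus the core remaining task is to rule out adequacy of $W_\pm(K)$, for which the natural approach is the spanning-surface characterization of Corollary~\ref{charcor}: if $W_\pm(K)$ were adequate, then $c(W_\pm(K))=4c(K)+1$ and one would have spanning Jones surfaces $S,S^{*}$ satisfying $I(\partial S,\partial S^{*})=2(4c(K)+1)$ and the Turaev-genus identity of~(\ref{char}). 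Using the explicit identification of the Jones surfaces of $W_\pm(K)$ coming from the Whitehead pattern and a Seifert surface of $K$ (as described in \cite{BMT}), these two conditions can be shown to be incompatible, giving the contradiction.

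The main obstacle is the non-adequacy step: translating the Jones slope data from \cite{BMT} into sufficiently concrete topological information about the spanning surfaces of $W_\pm(K)$ to exclude the possibility that equality~(\ref{char}) holds with $I(\partial S,\partial S^{*})=8c(K)+2$. Everything else is either diagrammatic bookkeeping (the upper bound), a direct citation (the lower bound from \cite{BMT} plus Theorem~\ref{main}), or a clean application of Corollary~\ref{criterion}.
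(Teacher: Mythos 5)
Your overall architecture matches the paper's exactly: the upper bound by doubling a (framing-corrected) adequate diagram, the lower bound from $dj_{W_\pm(K)}=8c(K)+2$ together with Theorem~\ref{main}, and the zero-writhe equality via Corollary~\ref{criterion} once non-adequacy of $W_\pm(K)$ is known. The computation of the Jones diameter you cite is also what the paper uses (it follows from Proposition~\ref{BMTprop} applied to $d_+[J_K(n)]$ and $-d_-[J_K(n)]=d_+[J_{K^*}(n)]$, using that $K$ is adequate so these are genuine quadratic polynomials with $4a_2=2c_\pm(K)\neq 0$ and $a_1\le 0$).

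The genuine gap is the non-adequacy of $W_\pm(K)$, which you correctly flag as ``the main obstacle'' but do not actually prove; this step is the mathematical heart of the theorem, and without it the zero-writhe case only localizes $c(W_\pm(K))$ to $\{4c(K)+1,\,4c(K)+2\}$. Your proposed route through Corollary~\ref{charcor} also has a structural problem: that corollary is an equivalence over \emph{all} pairs of spanning Jones surfaces, so to refute adequacy you would have to exclude every pair satisfying~(\ref{char}), not merely check that the natural surfaces from the Whitehead pattern fail it; and the first condition $I(\partial S,\partial S^*)=2c(W_\pm(K))$ cannot by itself produce a contradiction, since $dj_{W_\pm(K)}=8c(K)+2=2(4c(K)+1)$ is perfectly consistent with adequacy at crossing number $4c(K)+1$. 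The paper instead argues as follows (Lemmas~\ref{ifadequate} and~\ref{notadequate}). First, assuming $W_-(K)$ adequate, comparing the quadratic and linear coefficients of $d_+-d_-$ computed once from Proposition~\ref{BMTprop} and once from Lemma~\ref{known}(c) forces $c(W_-(K))=4c(K)+1$ and $g_T(W_-(K))=c(K)+2g_T(K)-1$. Second, since $W_-(D)$ is a $B$-adequate diagram, the invariance of $v_B$ and $c_+$ over $B$-adequate diagrams pins down $v_B(\bar D)=2v_B(D)+1$ and then, via the Turaev genus identity, $v_A(\bar D)=2v_A(D)$ for a putative adequate diagram $\bar D$. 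Finally, the linear coefficient of $-4d_-[J_{W_-(K)}(n)]$ is computed two ways: from the adequate state-sum formula~(\ref{top}) applied to $\bar D$ it equals $8c(D)-4v_A(D)+2$, while from Proposition~\ref{BMTprop}(b) applied to $D^*$ it equals $8c(D)-4v_B(D)$; these differ by an odd integer plus a multiple of $4$, i.e.\ they can never agree, which is the contradiction. Your instinct that the linear (Euler characteristic) data is where the obstruction lives is right, but some version of this explicit two-sided computation is needed to close the argument.
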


Note that the lower bound of Theorem \ref{doubleadequate} is  sharper than the general prediction stated in \cite[Problem 1.68]{Kirby} and the one
announced in the unpublished preprint \cite{Pascual}. For the proof of Theorem \ref{doubleintro} it is crucial that we have the  sharper lower bound of Theorem \ref{doubleadequate}  and Theorem \ref{main}.

 For the  proof of Theorem \ref{doubleadequate}  we will use the following result of Baker, Motegi and Takata 
 which is a special case of \cite[Proposition 2.4]{BMT}\footnote{Their result more generally assumes that $d_+[J_K(n)]$ is a quadratic quasi-polynomial $a(n)n^2 + b(n)n + c(n)$ for all $n\geq 1$ of period $\leq 2$, with $a_1:= a(1)$, $b_1:= b(1)$, and $c_1:= c(1)$. In our notation, these assumptions are satisfied for the adequate knots we work with in this paper for whom $d_+[J_K(n)]$ is a quadratic polynomial in $n$ for all $n\geq 1$. }.   
 
 \begin{prop}\label{BMTprop} Suppose that $K$ is a knot such that
 $ d_+[J_K(n)]=a_2 n^2+a_1n +a_0$ is a quadratic polynomial for all $n>0$.  Suppose that $a_1\leq 0$ and that if $a_1=0$ then $a_2\neq 0$.
 Then we have the following.
 \begin{enumerate}[(a)]
 \item \label{BMTpropa} If $a_2>0$ then, for $n$ sufficiently large,
 $$ d_+[J_{W_-(K)}(n)]=4a_2 n^2+ (-4a_2+2a_1-\frac{1}{2}) n+(a_2-a_1+a_0+\frac{1}{2}).$$
 
 \item \label{BMTpropb}  If $a_2>\frac{1}{8}$ then, for $n$ sufficiently large,
 $$ d_+[J_{W_+(K)}(n)]=(4a_2+\frac{1}{2}) n^2+ (-4a_2+2a_1) n+(a_2-a_1+a_0-\frac{1}{2}).$$
 \end{enumerate}
  \end{prop}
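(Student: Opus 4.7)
The plan is to derive the degree formulas via the cabling/satellite formula for the colored Jones polynomial, combined with the fusion theory recalled in Section 3. The untwisted Whitehead pattern $P_\pm$ (positive or negative clasp, winding number zero) sits in the solid torus $V$; when decorated with $\jw_{n-1}$ it expands in the standard basis $\{e_m\}_{m\geq 0}$ of the Kauffman bracket skein module $K(V)$ as
\[
P_\pm \cdot \jw_{n-1} = \sum_m \alpha_\pm(m, n)\, e_m,
\]
where $e_m$ is the core circle decorated with $\jw_m$. The satellite formula then gives
\[
J_{W_\pm(K)}(n) = \sum_m \alpha_\pm(m, n)\, J_K(m+1),
\]
once the writhe correction built into Definition \ref{d.cjp} is incorporated.

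To compute $\alpha_\pm(m, n)$, apply the fusion formula of Figure \ref{f.fusion} to the two $\jw_{n-1}$-colored strands on either side of the clasp of $P_\pm$, summing over even admissible $m \in \{0, 2, \ldots, 2(n-1)\}$ with fusion coefficient $\tfrac{\triangle_m}{\theta(n-1, n-1, m)}$. The two crossings of the clasp then act on the fused color-$m$ strand as a full twist, and two applications of the untwisting formula of Figure \ref{f.fusion} produce a monomial scalar of the form $(-1)^{\epsilon_\pm(m, n)} A^{\pm \tau(n-1, m)}$, where $\tau(n-1, m)$ is an explicit quadratic in $n-1$ and $m$ and the sign depends on the clasp orientation. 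This yields a closed form for $\alpha_\pm(m, n)$.

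Combining Lemma \ref{l.basic} to extract the $t$-degree of $\tfrac{\triangle_m}{\theta(n-1,n-1,m)}$, the monomial form of the clasp scalar, and the hypothesis $d_+[J_K(m+1)] = a_2(m+1)^2 + a_1(m+1) + a_0$, the maximum $t$-degree of the $m$-th summand becomes a quadratic polynomial $F_\pm(m; n)$ in $m$, whose leading $m^2$-coefficient is $a_2 \pm \tfrac{1}{4}$ (the sign reflecting the clasp orientation). The next step is to maximize $F_\pm(m; n)$ over $0 \leq m \leq 2(n-1)$. For part (a) ($W_-$) the clasp shifts the leading coefficient to $a_2 - \tfrac{1}{4}$; under the hypothesis $a_2 > 0$, a case analysis of where the maximum occurs (boundary versus interior critical point) together with careful bookkeeping yields the stated $4 a_2 n^2 + (-4 a_2 + 2a_1 - \tfrac{1}{2}) n + (a_2 - a_1 + a_0 + \tfrac{1}{2})$. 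For part (b) ($W_+$) the hypothesis $a_2 > \tfrac{1}{8}$ guarantees that the leading coefficient $a_2 + \tfrac{1}{4}$ in $F_+$ dominates the lower-order shift from the clasp, so the max is attained at (or very close to) the boundary $m = 2(n-1)$, yielding $(4 a_2 + \tfrac{1}{2}) n^2 + (-4 a_2 + 2a_1) n + (a_2 - a_1 + a_0 - \tfrac{1}{2})$. The hypothesis $a_1 \leq 0$ (with $a_2 \neq 0$ when $a_1 = 0$) ensures the max of $F_\pm$ is uniquely attained for all large $n$, so no cancellation of leading degrees occurs among the summands.

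The main obstacle is the meticulous bookkeeping: correctly tracking all signs and the half-integer shifts $\pm \tfrac{1}{2}$ (which arise from squaring the exponent in the untwisting formula and from the writhe correction $((-1)^{n-1} A^{n^2 - 1})^{\mathrm{wr}(D)}$ in Definition \ref{d.cjp}), verifying the even-parity admissibility constraint on $m$, and identifying at which endpoint of the parity-constrained interval the maximum is actually attained. These calculations are routine but delicate, and it is precisely these constants that produce the half-integer shifts in the stated linear and constant coefficients of parts (a) and (b).
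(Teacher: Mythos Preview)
The paper does not actually prove this proposition: its entire proof reads ``Following the conventions and notation of \cite[Proposition 2.4]{BMT} we take $\tau=0$, $w=1$, for $d_+[J_{W_-(K)}(n)]$ and $\tau=0$, $w=-1$, for $d_+[J_{W_+(K)}(n)]$.'' In other words, the proposition is quoted as a special case of a result already established by Baker--Motegi--Takata, and the proof consists only of matching parameters.

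Your proposal, by contrast, sketches how one would prove the result from scratch via the satellite formula, fusion expansion of the Whitehead pattern in $K(V)$, and degree maximization over the fusion parameter $m$. This is the correct strategy and is essentially what \cite{BMT} does. So your approach is not wrong, but it is doing far more work than the paper intends at this point; the paper treats the proposition as a black box imported from the literature.

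That said, as a self-contained argument your sketch has loose ends you should be aware of. You assert that the $m^2$-coefficient of $F_\pm(m;n)$ is $a_2 \pm \tfrac{1}{4}$ and then say that for part (a), with hypothesis $a_2>0$, ``a case analysis'' yields the answer; but if the coefficient were really $a_2 - \tfrac14$ this could be negative for small $a_2$, placing the maximum at an interior critical point rather than at $m=2(n-1)$, and one would then not obtain leading term $4a_2 n^2$. In fact the signs attached to the clasp contribution need to be pinned down carefully (and the roles of $W_+$ versus $W_-$ are easy to swap). You also need to argue that the maximizing summand is \emph{strictly} dominant so that no cancellation occurs; invoking $a_1\le 0$ for ``unique attainment'' is the right instinct, but the mechanism by which this prevents ties between adjacent even values of $m$ should be made explicit. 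None of this is fatal to the strategy, but the outline as written would not yet constitute a proof.
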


  \begin{proof} Following the conventions and notation of 
\cite[Proposition 2.4]{BMT} we take $\tau=0$, $w=1$, for $d_+[J_{W_-(K)}(n)]$ and
$\tau=0$, $w=-1$, for $d_+[J_{W_+(K)}(n)]$.
  \end{proof}

 A key part in the proof of Theorem \ref{doubleadequate}  is to show that if $\mathrm{wr}(K)=0$,
 then
 the Whitehead double $W_{\pm}(K)$ is non-adequate. This task is accomplished in the next two lemmas, using
 Proposition \ref{BMTprop} and properties of adequate knots and of their colored Jones polynomials. Then, Theorem \ref{doubleadequate}
 will follow easily from Corollary \ref{criterion}. Note that our first lemma doesn't require the hypotheses $\mathrm{wr}(K)=0$.

 \begin{lem}\label{ifadequate} Let $K$ be an adequate knot with crossing number $c(K)$ and writhe $\mathrm{wr}(K)$. Suppose moreover that $c_+(K), c_-(K)\neq 0$.
  If $W_{\pm}(K)$ is adequate, then
 $$c(W_{\pm}(K))=4c(K)+1\ \  \text{and}  \ \ g_T(W_\pm(K))=c(K)+2 g_T(K)-1,$$ where $g_T(W_\pm(K))$ denotes the Turaev genus of $W_\pm(K)$.
 \end{lem}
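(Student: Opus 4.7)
The strategy is to compute $d[J_{W_{\pm}(K)}(n)]$ as a quadratic polynomial in $n$ in two different ways and equate coefficients. On one side, if $W_{\pm}(K)$ is adequate, Lemma \ref{known}(c) supplies
\begin{equation*}
d[J_{W_{\pm}(K)}(n)] = 2c(W_{\pm}(K))\,n^2 + \bigl(2 - 2g_T(W_{\pm}(K)) - 2c(W_{\pm}(K))\bigr)n + 2g_T(W_{\pm}(K)) - 2.
\end{equation*}
On the other side, Proposition \ref{BMTprop} expresses $d_{\pm}[J_{W_{\pm}(K)}(n)]$ directly in terms of the coefficients of $d_{\pm}[J_K(n)]$, producing a second polynomial expression for $d[J_{W_{\pm}(K)}(n)]$ whose coefficients depend only on $c(K)$ and $g_T(K)$. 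Matching the coefficient of $n^2$ will give the crossing number claim, and matching the coefficient of $n$ will give the Turaev genus formula.

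I would treat $W = W_-(K)$ first. Using Lemma \ref{known}(a)--(b) with equality, together with the footnote to Proposition \ref{BMTprop} (which guarantees that $d_{\pm}[J_K(n)]$ are genuine quadratic polynomials for all $n \geq 1$ in the adequate case), write
\begin{equation*}
d_+[J_K(n)] = \tfrac{c_+(K)}{2}\,n^2 + a_1 n + a_0, \qquad d_-[J_K(n)] = -\tfrac{c_-(K)}{2}\,n^2 + a_1^* n + a_0^*.
\end{equation*}
Lemma \ref{known}(c) applied to $K$ then pins down $a_1 - a_1^*$ and $a_0 - a_0^*$ as explicit expressions in $c(K)$ and $g_T(K)$. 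Next, apply Proposition \ref{BMTprop}(a) to $K$ (its hypothesis $a_2 > 0$ holds since $c_+(K) \geq 1$) to obtain $d_+[J_{W_-(K)}(n)]$ explicitly. For $d_-[J_{W_-(K)}(n)]$, use the mirror identity $(W_-(K))^{*} = W_+(K^{*})$ to rewrite $d_-[J_{W_-(K)}(n)] = -d_+[J_{W_+(K^{*})}(n)]$, and apply Proposition \ref{BMTprop}(b) to the adequate mirror $K^{*}$ (its hypothesis $a_2 > \tfrac{1}{8}$ holds since $c_-(K) \geq 1$ yields $a_2 = c_-(K)/2 \geq 1/2$).

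Assembling $d[J_{W_-(K)}(n)] = 4d_+[J_{W_-(K)}(n)] - 4d_-[J_{W_-(K)}(n)]$ and substituting the pinned value of $a_1 - a_1^*$ produces a quadratic polynomial whose quadratic and linear coefficients depend only on $c(K)$ and $g_T(K)$; the leading coefficient works out to $8c(K) + 2$, with the additive $+2$ arising from the two $\pm\tfrac{1}{2}$-shifts in Proposition \ref{BMTprop}. Equating this with the adequate formula above and comparing coefficients of $n^2$ immediately yields $c(W_-(K)) = 4c(K) + 1$, and then comparing coefficients of $n$ yields the stated value of $g_T(W_-(K))$. The case $W_+(K)$ is entirely symmetric: apply Proposition \ref{BMTprop}(b) to $K$ and (a) to $K^{*}$, and observe that the quadratic and linear coefficients of $d[J_{W_+(K)}(n)]$ coincide with those for $d[J_{W_-(K)}(n)]$, so the same values emerge for both $W_{\pm}(K)$.

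The main technical point is the applicability of Proposition \ref{BMTprop}: one must know that $d_+[J_K(n)]$ is a genuine quadratic polynomial in $n$ for all $n \geq 1$ (not merely asymptotically) and that the sign hypotheses on its linear coefficient are met, and similarly for $K^{*}$. Both are asserted in the footnote to Proposition \ref{BMTprop} for the adequate knots under consideration. Beyond this, the argument is careful bookkeeping of the half-integer shifts in the BMT formulas as they propagate through the combination $4d_+ - 4d_-$, which is precisely the mechanism producing the additive $+1$ in $c(W_{\pm}(K)) = 4c(K) + 1$.
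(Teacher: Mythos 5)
Your proposal is correct and follows essentially the same route as the paper: express $4d_{\pm}[J_{W_{\mp}}]$ via Proposition \ref{BMTprop} applied to $K$ and to $K^{*}$ (using $d_-[J_{W_-(K)}(n)]=-d_+[J_{W_+(K^{*})}(n)]$), then match the quadratic and linear coefficients of the degree span against the adequate formula of Lemma \ref{known}(c). The only small inaccuracy is your attribution of the hypothesis $a_1\leq 0$ to the footnote of Proposition \ref{BMTprop}; the footnote only covers polynomiality, and the paper verifies $a_1\leq 0$ separately by citing the fact that for adequate knots the linear terms are (non-positive) multiples of Euler characteristics of spanning surfaces.
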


 \begin{proof} Since $K$ is adequate, by Lemma \ref{known}, 
 \begin{equation}
 d_+[J_K(n)] - d_-[J_K(n)] = \frac{c(K)}{2} n^2 + (1-g_T(K) - \frac{c(K)}{2})n + g_T(K) - 1, 
 \label{eq:ad}
 \end{equation}
 for every $n\geq 0$. 
 Furthermore, $ d_+[J_K(n)]$ 
satisfies 
 the hypothesis of Proposition \ref{BMTprop} with $4a_2=2c_+(K)>0$ and
  $  d_-[J_K(n)]=- d_+[J_{K^{*}}(n)]$ satisfies  that hypothesis with $4a_2=2c_+(K^{*})=2c_-(K)$. 
The requirement that $a_1\leq 0$ is satisfied since for adequate knots the linear terms of the degree of $J_K^{*}(n)$ are multiples of Euler characteristics of spanning surfaces of $K$.
See \cite[Lemmas 3.6, 3.7]{KTran}. Finally we have $a_2\neq 0$ since the statement of Theorem \ref{doubleadequate} assumes that $c_+(K), c_-(K)\neq 0$.
 Now  Proposition \ref{BMTprop}  implies that for sufficiently large $n$ we have that $d_+[J_{W_{\pm}(K)}(n)]-d_-[J_{W_{\pm}(K )}(n)]$ is actually a quadratic polynomial. 
 That is, there is some $n_0$, depending on $W_{\pm}(K)$, so that for all $n>n_0$, we have 
 $$
 d_+[J_{W_{\pm}(K)}(n)]-d_-[J_{W_{\pm}(K)}(n)]=d_2 n^2+d_1 n+d_0,
$$
 with $d_i \in {\mathbb Q}$.
 Using Proposition \ref{BMTprop}, the fact that $d_+[J_{W_+(K^{*} )}(n)]=-d_-[J_{W_-(K)}(n)] $, and Equation (\ref{eq:ad}), we will compute the constant  $d_1+d_2$
 in terms of the coefficients of $d_+[J_K(n)]-d_-[J_K(n)]$.
 
 To that end, write
$d_+[J_K(n)] =a_2 n^2 +a_1 n +a_0$ and $-  d_-[J_K(n)] =a^{*}_2 n^2 +a^{*}_1 n +a^{*}_0$ .
By Equation \eqref{eq:ad}  we have $a_2 +a^{*}_2=\frac{c(K)}{2}$  and $ a^{*}_1+a_1=1-g_T(K) - \frac{c(K)}{2}$.

We have $ d_2=4a_2+4a^{*}_2+\frac{1}{2}=2c_+(K)+2c_-(K)+\frac{1}{2}=2c(K)+\frac{1}{2}$, and
$$
d_1+d_2= 2a_1+2a^{*}_1= 2(1-g_T(K) - \frac{c(K)}{2})=2-2g_T(K)-c(K). 
$$

Now if $W_{\pm}(K)$ is adequate, then, again by Lemma \ref{known}, we also have
$
d_2=\frac{c(W_{\pm}(K))}{2}$ and
$d_1+d_2=1-g_T(W_{\pm}(K))$. Now comparing the 
right hand sides of the two expressions we have for $d_2$ and for $d_1+d_2$ we get the desired results.   \end{proof}
  
  Next we show that, under the additional hypothesis that  $\mathrm{wr}(K)=0$  the knots $W_{\pm}(K)$ are  non-adequate.
  
 \begin{lem}\label{notadequate} Let  $K$ be a nontrivial  adequate knot with
 $\mathrm{wr}(K)=0$. Then, the untwisted Whitehead doubles $W_{\pm}(K)$ are  non-adequate.  \end{lem}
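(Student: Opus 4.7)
The plan is to argue by contradiction, combining Lemma \ref{ifadequate} with the spanning-surface characterization of adequacy in Corollary \ref{charcor} and the explicit construction of Jones surfaces of Whitehead doubles in \cite{BMT, KTran}.

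Suppose $W_{\pm}(K)$ is adequate. Since $K$ is nontrivial with $\mathrm{wr}(K)=0$, both $c_{+}(K)$ and $c_{-}(K)$ are non-zero (otherwise $c(K)=\pm\mathrm{wr}(K)=0$ forces $K$ trivial), so Lemma \ref{ifadequate} applies and yields
\[c(W_{\pm}(K)) = 4c(K)+1 \quad\text{and}\quad g_T(W_{\pm}(K)) = c(K)+2g_T(K)-1.\]
By Corollary \ref{charcor}, the assumption that $W_{\pm}(K)$ is adequate then forces the existence of a pair of spanning Jones surfaces $S, S^{*}$ for $W_{\pm}(K)$ satisfying
\[I(\partial S,\partial S^{*}) = 2c(W_{\pm}(K)) = 8c(K)+2,\]
\[\chi(S)+\chi(S^{*}) + \tfrac{1}{2}\,I(\partial S,\partial S^{*}) = 2 - 2g_T(W_{\pm}(K)) = 4 - 2c(K) - 4g_T(K),\]
and hence $\chi(S)+\chi(S^{*}) = 3-6c(K)-4g_T(K)$.

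The next step is to compare this predicted data with the actual Jones surfaces supplied by the construction of Baker-Motegi-Takata \cite{BMT} and Kalfagianni-Tran \cite{KTran}: starting from an adequate diagram $D$ of $K$, they build a pair of Jones surfaces of $W_{\pm}(K)$ by a $2$-cabling of the Kauffman state surfaces $S_A(D), S_B(D)$ of $D$, together with a patching operation across the $\pm$-clasp. The boundary slopes of these surfaces reproduce the computation $|s_{2}-s_{2}^{*}| = 8c(K)+2$ already seen in the proof of Lemma \ref{ifadequate}, so the intersection equation above is satisfied. I would then read off the total Euler characteristic of these surfaces: using that $\chi(S_A(D))+\chi(S_B(D)) = v_A(D)+v_B(D)-2c(D) = 2-2g_T(K)-c(K)$ together with the fact that the $2$-cable doubles Euler characteristic up to a correction coming from the clasp disks, one computes $\chi(S)+\chi(S^{*})$ explicitly and finds that it differs from $3-6c(K)-4g_T(K)$ by a nonzero additive constant (independent of $K$) coming precisely from the clasp patching. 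This mismatch contradicts the equality forced by Corollary \ref{charcor}, so $W_{\pm}(K)$ cannot be adequate.

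The main obstacle is the careful Euler-characteristic bookkeeping in the BMT/KT construction, in particular isolating the clasp contribution to $\chi(S)+\chi(S^{*})$; the slope information is immediate from \cite{BMT, KTran}, but extracting the exact surface-topology data from the cabling-plus-clasp patching is what does the work. Once the constant shift is pinned down and the mismatch with Corollary \ref{charcor} is made explicit, the contradiction is routine, and the resulting non-adequacy of $W_{\pm}(K)$ feeds into the proof of Theorem \ref{doubleintro} via Corollary \ref{criterion}.
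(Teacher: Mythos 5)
Your proposal has the right high-level shape (assume adequacy, feed Lemma \ref{ifadequate} into it, and contradict the resulting numerology against the actual degree data coming from \cite{BMT}), but the specific equation you plan to contradict cannot produce a contradiction. The second equation of Corollary \ref{charcor} constrains only the \emph{sum} $\chi(S)+\chi(S^{*})$, which at the level of degrees is the linear coefficient of $d_+[J_W(n)]-d_-[J_W(n)]$. That quantity is exactly what Lemma \ref{ifadequate} uses to \emph{define} $g_T(W_{\pm}(K))=c(K)+2g_T(K)-1$ under the adequacy hypothesis: the value $d_1+d_2$ is computed from Proposition \ref{BMTprop} and then \emph{set equal} to $1-g_T(W_{\pm}(K))$. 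So the identity $\chi(S)+\chi(S^{*})+\tfrac12 I(\partial S,\partial S^{*})=2-2g_T(W_{\pm}(K))$ holds by construction for the BMT surfaces once you plug in the $g_T$ from Lemma \ref{ifadequate}; no amount of careful clasp bookkeeping will make it fail. The obstruction to adequacy lives one level finer, in the \emph{individual} linear coefficients of $4d_+$ and $4d_-$ (equivalently, in $\chi(S)$ and $\chi(S^{*})$ separately), and your proposal never separates them.

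Concretely, the paper's proof works as follows: since $W_-(D)$ is a $B$-adequate diagram, the quantities $v_B$ and $c_+$ of any $B$-adequate diagram of $W_-(K)$ are forced to equal $2v_B(D)+1$ and $4c_+(D)$; the Turaev genus formula from Lemma \ref{ifadequate} then forces $v_A(\bar D)=2v_A(D)$ for a hypothetical adequate diagram $\bar D$; and the linear coefficient of $-4d_-[J_{W_-(K)}(n)]$ computed from $\bar D$ via the adequate degree formula is $8c(D)-4v_A(D)+2$, while computed from Proposition \ref{BMTprop}\eqref{BMTpropb} applied to $D^{*}$ it is $8c(D)-4v_B(D)$. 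The discrepancy is $4\bigl(v_B(D)-v_A(D)\bigr)+2\equiv 2\pmod 4$, hence nonzero. Note this is \emph{not} a constant independent of $K$, contrary to your guess about a universal clasp contribution; it is nonzero for a mod-$4$ reason. To repair your argument you would need to (i) replace the sum equation of Corollary \ref{charcor} by the two one-sided statements pinning down $\chi(S)$ and $\chi(S^{*})$ individually, (ii) establish that the hypothetical adequate diagram has $v_B(\bar D)=2v_B(D)+1$ and $v_A(\bar D)=2v_A(D)$ so that the ``predicted'' individual Euler characteristics are determined, and (iii) actually carry out the comparison you defer, which is the entire content of the proof rather than a routine final step.
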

 
 \begin{proof} We will work with the negative  Whitehead doubles $W_{-}(K)$ first.
 
Recall that if   $K$ has an adequate diagram $D=D(K)$ with $c(D)=c_+(D)+c_-(D)$ crossings, and the all-$A$  (rep. all-$B$) resolution
 has $v_A=v_A(D)$ (resp.  $v_B=v_B(D)$) state circles, then 
 
  \begin{equation}
 4\, d_-[J_{K}(n)] =  -2c_- (D) n^2 + 2(c(D) -v_A(D)) n  +2 v_A(D) -2 c_+(D),
\label{top}
\end{equation}
 \begin{equation}
4 \, d_+[J_{K}(n)] = 2c_+ (D)n^2 + 2(v_B(D) - c(D)) n +2 c_-(D)-  2v_B(D).
\label{bottom}
\end{equation}
Equation (\ref{top}) holds for $A$-adequate diagrams $D=D(K)$.
Thus in particular the quantities $c_- (D), v_A(D)$ are invariants of $K$ (independent of the particular $A$-adequate diagram).
Similarly, Equation (\ref{bottom}) holds for $B$-adequate diagrams $D=D(K)$ and hence
$c_+ (D), v_B(D)$ are invariants of $K$. Recall also that $c(D) = c(K)$ since $D$ is adequate. 

Now we start with a knot $K$ that has an adequate diagram $D$ with $\mathrm{wr}(D)=\mathrm{wr}(K)=0$. Hence we have $c_+ (D)=c_- (D)$. Since $D$ is $B$-adequate, the double   $W_{-}(D)$ is a $B$-adequate diagram of  $W_{-}(K)$ with $v_B(W_{-}(D))=2v_B(D)+1$ and $c_+(W_{-}(D))=4c_+(D)$.  These  statements are proved, for instance, in \cite[Proposition 7.1]{BMT}.
Furthermore, since as said above these quantities are invariants of $W_{-}(K)$, they remain the same for all $B$-adequate diagrams of
$W_{-}(K)$.

Now assume, for a contradiction, that $W_{-}(D)$ is adequate: Then, it has a diagram ${\bar D}$ that is both $A$ and $B$-adequate.  
By above observation we must have
$v_B({\bar D})=v_B(W_{-}(D))=2v_B(D)+1$ and $c_+({\bar D})=c_+(W_{-}(D))=4c_+(D)$.

By Lemma \ref{ifadequate}, 
$c({\bar D})=c(W_{-}(K))=4c(K)+1$ and, since   $g_T({\bar D})=g_T(W_-(K))$ \cite{Abe}, we also obtain \begin{equation}
g_T({\bar D})=g_T(W_-(K))=c(K)+2 g_T(K)-1.
\label{TG}
\end{equation}
Now we compare the two expressions of $g_T(W_-(K))$ in Equation (\ref{TG}) in order to get a relation between  $v_A({\bar D})$ and $v_A(D)$.

On  one hand, $2g_T({\bar D})=2-v_B({\bar D})-v_A({\bar D})+c({\bar D})=2-2v_B(D)-1-v_A({\bar D})+4c(D)+1=2-2v_B(D)-v_A({\bar D})+4c(D)$.

On the other hand, $2g_T(W_-(K))=2(c(K)+2 g_T(K)-1)=2c(D)+2(2-v_A(D)-v_B(D)+c(D))-2=2-2v_B(D)-2v_A(D)+4c(D)$.

Comparing the right-hand sides of the last two equations we find
$v_A({\bar D})=2v_A(D)$.

Write
$$
- 4\, d_-[J_{W_-(K)}(n)] =4\, d_+[J_{W_+(K^{*})}(n)]=xn^2+yn+z,
$$
for some  $x,y,z\in {\mathbb Q}$.

For sufficiently large $n$, we have two different expressions  for $x,y,z$. 

On the one hand, because ${\bar D}$ is $A$-adequate, we can use  Equation (\ref{top}) to determine $x,y,z$ through $- 4\, d_-[J_{W_-(K)}(n)]$.

On the other hand, using $4\, d_+[J_{W_+(K^{*})}(n)]$,   $x,y,z$ can be determined using Proposition \ref{BMTprop}\eqref{BMTpropb} with $a_2$ and $a_1$ coming from Equation (\ref{top}) applied to the diagram $D^{*}$, that is the mirror image of $D$.

We will use these two ways to find the quantity $y$.
Applying Equation (\ref{top})  to ${\bar D}$ we obtain
\begin{equation}
y=2(c({\bar D}) -v_A({\bar D})) =2(4c(D)+1))-2 (2v_A(D))=
8 c(D)-4 v_A(D) +2.
\label{eq:33}
\end{equation}

On the other hand,  using  Proposition \ref{BMTprop}\eqref{BMTpropb} with $a_2$ and $a_1$ coming from Equation (\ref{top}) applied to the diagram $D^{*},$ we have:
$4a_2=-2c_-(D^{*})=-c_+(D)-c_-(D)=-c(D)$. Also we have
$2a_1=c(D^{*})-v_A(D^{*})=c(D)-v_B(D)$, since we have $v_A(D^{*})=v_B(D)$.

We obtain
\begin{equation}
y=4(-4a_2+2a_1)=4c(D)+4(c(D)-v_A(D^*))=8c(D)-4v_B(D).
\label{eq:34}
\end{equation}

Since $v_A(D), v_B(D)$ are positive integers we have $-2v_A(D)+1\neq -2v_B(D)$. It follows that the two expressions derived for $y$  from Equations \eqref{eq:33} and  \eqref{eq:34} do not agree and we arrived at a  contradiction. We conclude that $W_{-}(K)$ is  non-adequate.

To deduce the result for $W_{+}(K)$, let $K^{*}$ denote the mirror image of $K$. Note that $W_{+}(K)$ is the mirror image of $W_{-}(K^{*})$.
 If $W_{+}(K)$ were adequate, then the mirror image $W_{-}(K^{*})$ would also be adequate. But $K^{*}$ is a non-trivial adequate knot with 
 $\mathrm{wr}(K^{*})=\mathrm{wr}(K)=0$, and our argument above shows that $W_{-}(K^{*})$ is non-adequate.
  \end{proof}

  We now finish the proof of   Theorem \ref{doubleadequate}.
  
   \begin{proof} 
    Let $D$ be an adequate diagram of $K$ with writhe $\writhe(D)$.  If needed, first adjust $D$ by Reidemeister I moves so that it has zero writhe number.
    Then, let $W_{-}(D)$ (resp.  $W_{+}(D)$) be the diagram of
    $W_{-}(K)$ (resp.  $W_{+}(K)$)  obtained by taking a parallel copy of $D$ and connecting the two copies by  a negative (resp. positive) clasp.
  Clearly $W_{\pm}(D)$ has $4c(K)+2|\mathrm{wr}(K)|+2$ crossings.  Thus the upper inequality follows. 
      
    As discussed in the proof of Lemma \ref{ifadequate},  we have $$\frac{jd_{W_{\pm}(K)}}{2}=2(2c(D)+\frac{1}{2})= (4c(K)+1)\geq c(W_{\pm}(D)) -1-2\mathrm{wr}(K),$$ 
   and hence, if $\mathrm{wr}(K)=0$, we get $\frac{jd_{W_{\pm}(K)}}{2}=c(W_{\pm}(D)) -1$.
    On the other hand, by Lemma \ref{notadequate}, if     $\mathrm{wr}(K)=0$ then  $W_{\pm}(K)$ is non-adequate. 
    Thus, if $\mathrm{wr}(K)=0$,  Corollary \ref{criterion} applies to give $c(W_{\pm}(K))=c(W_{\pm}(D))=4c(K)+2$.
    \end{proof}
   
   \subsection{ Doubles of amphicheiral knots}
Note that  amphicheiral (a.k.a. equivalent to their mirror image)  adequate knots must have $\mathrm{wr}(K)=0$. This means that amphicheiral  adequate knots admit adequate diagrams of zero writhe number.  By Theorem \ref{doubleadequate} we have the following.
\begin{cor} \label{amphi} Suppose that $K$ is an amphicheiral  adequate knot with crossing number $c(K)$. Then $c(W_-(K))=4 c(K)+2.$
\end{cor}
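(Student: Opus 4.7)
The plan is to reduce the corollary directly to Theorem \ref{doubleadequate} by verifying the writhe and positivity hypotheses. The main content is to explain why an amphicheiral adequate knot $K$ satisfies $\writhe(K)=0$ and $c_{\pm}(K)\neq 0$; once these are in place, the equality $c(W_{-}(K))=4c(K)+2$ is immediate from the ``furthermore'' clause of Theorem \ref{doubleadequate}.

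First I would recall that, by the cited work of Kauffman, Murasugi, and Thistlethwaite, the writhe of an adequate diagram is an invariant of the underlying adequate knot, denoted $\writhe(K)$. Since taking the mirror image of a diagram converts every positive crossing to a negative one and vice versa, it changes the writhe by a sign: $\writhe(K^{*})=-\writhe(K)$. By hypothesis $K$ is equivalent to $K^{*}$, so $\writhe(K)=\writhe(K^{*})=-\writhe(K)$, forcing $\writhe(K)=0$.

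Next I would observe that the condition $c_{+}(K),c_{-}(K)\neq 0$ needed to apply Theorem \ref{doubleadequate} is automatic here. Fixing an adequate diagram $D=D(K)$ with $\writhe(D)=0$, one has $c_{+}(D)=c_{-}(D)=c(K)/2$. Because $K$ is non-trivial adequate we have $c(K)>0$ (indeed $c(K)\geq 4$ since $c(K)$ must be even), and hence $c_{+}(K)=c_{-}(K)=c(K)/2\neq 0$. Note that even the non-triviality assumption is essentially free: an amphicheiral knot with $c(K)=0$ is the unknot, in which case the statement is vacuous or can be handled separately.

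With $K$ adequate, $\writhe(K)=0$, and $c_{\pm}(K)\neq 0$ verified, Theorem \ref{doubleadequate} applies verbatim and yields $c(W_{-}(K))=4c(K)+2$ (and indeed $c(W_{+}(K))=4c(K)+2$ as well). There is no significant obstacle here; the only subtle point is the standard but important fact that writhe is an adequate-knot invariant, which is what makes the mirror-symmetry argument for $\writhe(K)=0$ legitimate in the first place.
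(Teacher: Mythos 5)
Your proposal is correct and follows essentially the same route as the paper, which likewise deduces $\writhe(K)=0$ from amphicheirality of an adequate knot and then invokes the ``furthermore'' clause of Theorem \ref{doubleadequate}. Your explicit verification that $c_{+}(K)=c_{-}(K)=c(K)/2\neq 0$ (and the implicit non-triviality of $K$) is a bit more careful than the paper's one-line remark, but it is the same argument.
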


The figure-8 knot is the fist example on the knot table to which Corollary \ref{amphi} applies.  For $m>0$,
letting $K_m$ denote the connected sum of $m$-copies of the  figure-8 knot we have
$c(W_{\pm}(K_m))=16m+2$. This should be compared with the discussion in \cite{blog}.

For completeness, in Table 1 we give the  list of all the prime knots up to 12 crossings to which Corollary \ref{amphi} applies.
The information is taken from Knotinfo  \cite{Knotinfo}, where the terminology used for knots that are equivalent to their mirror images is \emph{fully amphicheiral}.
\begin{table}[H]
\begin{center}
\def \svgwidth{.4\columnwidth}
\begin{tabular}{|c|c|c|c|c|c|c|}
\hline
$4_1$ &  $8_{18}$  &     $10_{43}$ &    $12a_{435}$ &  $12a_{506}$  &  $12a_{1105}$    &   $12a_{1275}$   \\
\hline
 $6_3$ &$10_{17}$&  $10_{45}$ &      $12a_{471}$ &   $12a_{510}$  &   $12a_{1127}$      & $12a_{1281}$ \\
\hline
 $8_3$ & $10_{33}$ &     $10_{99}$ &    $12a_{477}$  &  $12a_{1019}$  & $12a_{1202}$       &  $12a_{1287}$ \\
\hline
 $8_9$ & $10_{37}$&     $10_{123}$ &  $12a_{499}$ &     $12a_{1039}$  &  $12a_{1273}$   & $12a_{1288}$    \\
\hline
  \end{tabular}
\caption{Prime, adequate, amphicheiral knots up to 12 crossings}
\end{center}

 \end{table}

A way to produce amphicheiral  knots is to take connected sums of knots with their mirror images as in Corollary \ref{mirror} which we prove  next.

\begin{named}{Corollary \ref{mirror}} For a knot $K$ let $K^{*}$ denote the mirror image of $K$.  For every  $m>0$, let $K_m:=\#^{m}(K\# K^{*})$ denote the connected sum of $m$-copies of $K\# K^{*}$. Suppose that $K$ is adequate with crossing number $c(K)$. Then, the untwisted Whitehead doubles $W_{\pm}(K_m)$ are non-adequate, and we have
$c(W_{\pm}(K))=8 m c(K)+2.$
\end{named}

\begin{proof}Given an adequate diagram $D=D(K)$, the mirror image of $D$, denoted by $D^{*}$ is an adequate diagram for $K^{*}$.
The connected sum of adequate diagrams $D\# D^{*}$ is an adequate diagram of $K\#K^{*}$  with $\mathrm{wr}(K\#K^{*})=0$.  Note, that the choice
of orientations of $K$ and $K^{*}$ may affect the (oriented) knot type $K\#K^{*}$. Nevertheless, with any choice of orientations, the knot represented by the connected sum is adequate.

Similarly an adequate diagram of writhe zero for $K_m$ is obtained by taking the connected sum of $n$-copies of  $D\# D^{*}$.
Since the crossing number is known to be additive under connected sum of adequate knots, we have $c(K_m)= 2 m c(K)$
and the result follows from Corollary \ref{amphi}.
In fact, we get that $W_{\pm}(D_m)$ is a minimum crossing diagram for $K_m$.\end{proof}

\begin{remark}
Out of the 2977 prime knots with up to 12 crossings, 1851 are listed as adequate on Knotinfo  \cite{Knotinfo}. Hence, Corollary \ref{mirror} applies to them.
\end{remark}

 \subsection{Doubles of torus knots} For  co-prime integers $p, q$ with $1<p, q$, let $T_{p,q}$ denote the $(p,q)$ torus knot.
  It is known that $c(T_{p,q})={\rm min}\{ p(q-1), \ q(p-1)\}$. On the other hand, as it can be found  for example in \cite{Garslopes},  we have that $jd_{K}=pq<2c(T_{p,q})$, for $q>2$.
  Thus $T_{p,q}$ is not adequate for $q>2$.  
  
  \begin{prop} We have $c(W_{\pm}(T_{p,q}))> 2c(T_{p,q}).$
  \end{prop}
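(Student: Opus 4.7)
My plan is to reduce the claim to a Jones diameter estimate and then invoke Theorem~\ref{main}. Since Theorem~\ref{main} gives $dj_L \leq 2c(L)$ for every knot $L$, it suffices to prove the strict inequality
\[
dj_{W_{\pm}(T_{p,q})} > 4\,c(T_{p,q}).
\]
Assuming $1 < p < q$ so that $c(T_{p,q}) = q(p-1)$, the target becomes $dj_{W_{\pm}(T_{p,q})} > 4q(p-1) = 4pq - 4q$. I will achieve this by computing the larger Jones slope of the double exactly from Proposition~\ref{BMTprop}, and by arguing separately that the smaller Jones slope is at most zero (or at least strictly less than $4q$).

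For the larger slope I apply Proposition~\ref{BMTprop} with $K = T_{p,q}$. The Slopes Conjecture and its strong refinement are known for torus knots \cite{Garslopes}, so $d_+[J_{T_{p,q}}(n)] = a_2 n^2 + a_1 n + a_0$ with $a_2 = pq/4$ (the positive boundary slope of $T_{p,q}$ divided by $4$) and with $a_1 \leq 0$ (obtained from the Euler characteristic of the corresponding incompressible surface through the strong refinement). Since $pq \geq 6$, the condition $a_2 > 1/8$ is comfortably met, and both parts of Proposition~\ref{BMTprop} apply to give
\[
s_2(W_-(T_{p,q})) = 4pq, \qquad s_2(W_+(T_{p,q})) = 4pq+2.
\]

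For the smaller slope I use the fact that every untwisted Whitehead double is a genus-one knot carrying a Seifert surface of boundary slope $0$. The results of \cite{BMT} extend the Slopes Conjecture to untwisted Whitehead doubles of companions for which the conjecture is already known, so $0 \in js^{*}_{W_{\pm}(T_{p,q})}$. Combining this with the previous step,
\[
dj_{W_{\pm}(T_{p,q})} \;\geq\; \bigl|\,s_2(W_{\pm}(T_{p,q})) - 0\,\bigr| \;\geq\; 4pq \;>\; 4q(p-1) = 4c(T_{p,q}),
\]
where the strict inequality uses $p \geq 2$. By Theorem~\ref{main}, $c(W_{\pm}(T_{p,q})) \geq dj_{W_{\pm}(T_{p,q})}/2 > 2\,c(T_{p,q})$, as wanted.

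The main obstacle is the third step. Proposition~\ref{BMTprop}, as quoted in the paper, computes only the Jones slope of $W_{\pm}(K)$ that arises from the positive slope of $K$; it requires the companion's quadratic coefficient $a_2$ to be positive (indeed exceeding $1/8$). Computing the \emph{other} Jones slope of $W_{\pm}(T_{p,q})$ amounts to applying the Baker--Motegi--Takata machinery to the mirror knot $T_{p,q}^{*}$, whose maximum-degree quadratic coefficient is $a_2 = 0$ (reflecting the fact that the minimum-degree Jones slope of $T_{p,q}$ is $0$, realized by the Seifert surface). That regime falls outside the statement of Proposition~\ref{BMTprop} and needs the unquoted portion of their Proposition~2.4; alternatively, one must verify by a direct Rosso--Jones computation that $J_{W_{\pm}(T_{p,q})}(n)$ has subquadratic growth on the minimum-degree side. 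Establishing $0 \in js^{*}_{W_{\pm}(T_{p,q})}$ (or more modestly, that the minimal Jones slope is strictly less than $4q$) is the real technical content of the proof.
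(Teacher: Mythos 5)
Your overall strategy --- bound the Jones diameter of $W_{\pm}(T_{p,q})$ from below and then convert that into a crossing number bound --- is exactly the paper's, and your observation that the strict inequality can be extracted from $4pq > 4c(T_{p,q})$ (so that even the non-strict bound $dj_L \leq 2c(L)$ suffices, with no appeal to non-adequacy of the double) is a genuine simplification. But, as you yourself flag, the argument as written has a real gap on the minimal-degree side. Proposition \ref{BMTprop} controls only $d_+[J_{W_\pm(K)}(n)]$ in terms of the positive slope of the companion; it says nothing about $d_-[J_{W_\pm(T_{p,q})}(n)]$, and nothing you have established rules out the minimal Jones slope of the double being positive. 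The Slopes Conjecture holding for $W_{\pm}(T_{p,q})$ only says that Jones slopes are boundary slopes, not that the boundary slope $0$ of the genus-one Seifert surface is realized as a Jones slope, so the claim $0 \in js^{*}_{W_{\pm}(T_{p,q})}$ does not follow from what you invoke. Without an upper bound on $s_2^{*}$ you cannot pass from $4pq \in js_{W_{\pm}(T_{p,q})}$ to $dj_{W_{\pm}(T_{p,q})} \geq 4pq$, and the chain of inequalities does not close. A secondary issue: the version of Proposition \ref{BMTprop} quoted in the paper assumes $d_+[J_K(n)]$ is an honest quadratic polynomial for all $n \geq 1$ (the footnote justifies this only for the adequate companions used elsewhere in the paper), whereas $T_{p,q}$ with $q>2$ is non-adequate and its degree is genuinely quasi-polynomial, so even your first step requires the unquoted, period-$\leq 2$ form of Baker--Motegi--Takata's result.

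The paper sidesteps all of this by citing \cite[Lemma 7.3]{BMT}, which computes the full diameter $dj_{W_{\pm}(T_{p,q})} = 4pq+2$ (i.e.\ both slopes) and also shows that $W_{\pm}(T_{p,q})$ is non-adequate; Theorem \ref{main} then gives the strict inequality $c(W_{\pm}(T_{p,q})) > \tfrac{1}{2}\, dj_{W_{\pm}(T_{p,q})} = 2pq+1 > 2c(T_{p,q})$. If you want to complete your route, the missing ingredient is precisely that cited control of the minimal Jones slope (or any direct argument, e.g.\ via Rosso--Jones applied on the mirror side, that it is at most $0$); with that in hand, the rest of your write-up goes through.
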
  
  \begin{proof} The Jones diameter $jd_{W}$ for $W=W_{\pm}(T_{p,q})$ has been calculated in \cite[Lemma 7.3]{BMT} where it was also shown that $W$ is non-adequate.
  We have $jd_{W}=4pq+2$. Now Theorem \ref{t.main} implies  that  $c(W_{\pm}(T_{p,q})>\frac{jd_{W}}{2}= 2pq+1 =2c(T_{p,q})+2 {\rm min}\{p, q\}+1>2c(T_{p,q})$.
  \end{proof}

\subsection{ Crossing number of connected sums} 
Here we give applications of Corollary \ref{criterion} to the question on additivity of crossing numbers under connected sum of knots \cite[Problems 1.67]{Kirby}.
As already mentioned, for adequate knots the crossing number is additive under connected sum. The next result proves additivity for families of knots where one summand is adequate while the other is not.

 \begin{named}{Theorem \ref{sum}} Suppose that $K$ is an adequate knot with $\mathrm{wr}(K)=0$, and let
  $K_1:=W_{\pm}(K)$. Then for any adequate knot $K_2$, the connected sum $K_1\# K_2$
is non-adequate and we have
  $$c(K_1\# K_2)=c(K_1)+c(K_2).$$
  \end{named}
  
 Before we proceed with the proof of the theorem we need some preparation. Given a knot $K$, such that for $n$ large enough the degrees of the colored Jones polynomials
  of $K$ are quadratic polynomials with rational coefficients, we will write
   $$- 4\, d_-[J_{K}(n)] =x(K)n^2+y(K)n+z(K) \ \text{and} \    4\, d_+[J_{K}(n)] =x^{*}(K)n^2+y^{*}(K)n+z^{*}(K).$$
  We also write
  $$ d_+[J_{K}(n)]-d_+[J_{K}(n)]=d_2(K )n^2+d_1(K) n+d_0(K).$$

Now let $K_1$, $K_2$ be as in the statement of Theorem \ref{sum}. 
    By   assumption and Proposition \ref{BMTprop}, for $n$ large enough the degrees of the colored Jones polynomials of both $K_1$
  and $K_2$ are quadratic polynomials.
  For the proof we need the following  elementary lemma.
  
  \begin{lem}\label{elementary} For large enough $n$, the degrees $d_{\pm}[J_{K_1\#K_2}(n)]$ are polynomials, and we have the following.
  \begin{enumerate}[(a)]
  \item $x(K_1\#K_2)=x(K_1)+x(K_2)\  {\rm{and}} \  x^{*}(K_1\#K_2)=x^{*}(K_1)+x^{*}(K_2).$
  \item  $y(K_1\#K_2)=y(K_1)+y(K_2)-2\  {\rm{and}} \  y^{*}(K_1\#K_2)=y^{*}(K_1)+y^{*}(K_2)-2.$
  \item $d_2(K_1\#K_2)=d_2(K_1)+d_2(K_2)\  {\rm{and}} \  d_1(K_1\#K_2)=d_1(K_1)+d_1(K_2)-1.$
  \end{enumerate}
  \end{lem}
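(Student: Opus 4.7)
The plan is to exploit the multiplicativity of the (unreduced) colored Jones polynomial under connected sum. Specifically, I will use the classical identity
\[
J_{K_1 \# K_2}(n) \;=\; \frac{J_{K_1}(n)\, J_{K_2}(n)}{J_U(n)},
\]
where $U$ denotes the unknot. From Definition \ref{d.cjp} one reads off
$J_U(n) = t^{-(n-1)/2}+t^{-(n-3)/2}+\cdots+t^{(n-3)/2}+t^{(n-1)/2},$
so $d_+[J_U(n)] = (n-1)/2$ and $d_-[J_U(n)] = -(n-1)/2$, which gives the auxiliary identities $4 d_+[J_U(n)] = 2n-2$ and $-4 d_-[J_U(n)] = 2n - 2$.

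The first step is to pass to degrees. Taking $d_\pm$ in the multiplicative formula above yields
\[
4 d_+[J_{K_1\#K_2}(n)] = 4 d_+[J_{K_1}(n)] + 4d_+[J_{K_2}(n)] - (2n-2),
\]
\[
-4 d_-[J_{K_1\#K_2}(n)] = -4 d_-[J_{K_1}(n)] - 4d_-[J_{K_2}(n)] - (2n-2).
\]
Since, by hypothesis and Proposition \ref{BMTprop}, each of $4d_+[J_{K_i}(n)]$ and $-4d_-[J_{K_i}(n)]$ is, for $n$ sufficiently large, a quadratic polynomial in $n$, we immediately conclude that $d_\pm[J_{K_1\#K_2}(n)]$ are quadratic polynomials in $n$ for $n$ large enough, and comparing the $n^2$ and $n$ coefficients on both sides gives parts (a) and (b) directly.

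For part (c), I would simply combine (a) and (b). By definition,
\[
d_+[J_K(n)] - d_-[J_K(n)] = d_2(K) n^2 + d_1(K) n + d_0(K),
\]
so multiplying by $4$ gives $4 d_2(K) = x^*(K) + x(K)$ and $4 d_1(K) = y^*(K) + y(K)$. Applying this identity to $K_1 \# K_2$, $K_1$, and $K_2$ and substituting (a) and (b) into the right-hand sides gives $d_2(K_1 \# K_2) = d_2(K_1) + d_2(K_2)$ and $d_1(K_1\#K_2) = d_1(K_1) + d_1(K_2) - 1$ (the $-1$ arising from $(-2-2)/4$). Every step is an elementary bookkeeping of degrees; there is no real obstacle, the only mild care needed being to verify that for $n$ large enough none of the degrees can drop due to cancellation, which is guaranteed because $J_U(n)$ is a sum of monomials with positive coefficients and hence its degrees behave multiplicatively with no unexpected collapse.
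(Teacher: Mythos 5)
Your proposal is correct and follows essentially the same route as the paper: both rest on the multiplicativity of the reduced polynomial $\overline{J_K(n)}=J_K(n)/J_U(n)$ under connected sum (your displayed identity is exactly this) together with the computation $4d_+[J_U(n)]=2n-2=-4d_-[J_U(n)]$, after which (a)--(c) are degree bookkeeping. The only cosmetic difference is that you spell out the coefficient comparison that the paper leaves as ``the desired results follow easily.''
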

  \begin{proof}
   The reduced colored Jones polynomial of a knot $K$  is defined by
  ${\overline{J_K(n)}}:= \frac{J_K(n)}{J_U(n)}$,  where $J_K(n):=J_K(n)(t)$ and $J_{U}(n):=J_U(n)(t)$ are given in Definition \ref{d.cjp}.
 Since the reduced polynomial is known to be multiplicative  under connected sum \cite{Lickorishbook},
 we get
 ${\overline{J_{K_1\#K_2}(n)}}={\overline{J_{K_1}(n)}}\cdot {\overline{J_{K_2}(n)}}$.
  The desired results follow easily since $- 4\, d_-[J_{U}(n)] =2n-2= 4\, d_+[J_{U}(n)]$.
   \end{proof}
  
  The second ingredient we need for the proof of Theorem \ref{sum} is the following lemma.
  
  \begin{lem} \label{notadequate2}Suppose that $K$ is a non-trivial adequate knot with $\mathrm{wr}(K)=0$, and let
 $K_1:=W_{\pm}(K)$. Then for any adequate knot $K_2,$ the connected sum $K_1\# K_2$
 is non-adequate.

  \end{lem}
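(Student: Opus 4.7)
The plan is to adapt the argument of Lemma \ref{notadequate}, deriving a contradiction from two different computations of $v_B$ of a hypothetical adequate diagram of $K_1\#K_2$. First I reduce to the $W_-$ case: since $(W_+(K)\#K_2)^{*}=W_-(K^{*})\#K_2^{*}$, with $K^{*}$ a non-trivial adequate knot of zero writhe and $K_2^{*}$ adequate, the $W_+$ case will follow from the $W_-$ case by mirror symmetry. So assume for contradiction that $K_1\#K_2$ is adequate, with adequate diagram $\bar D$, where $K_1=W_-(K)$; fix an adequate diagram $D$ of $K$ with $\mathrm{wr}(D)=0$ and an adequate diagram $D_2$ of $K_2$.

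The first ingredient computes $v_B(\bar D)$ as a $B$-adequate invariant. As established in the proof of Lemma \ref{notadequate}, $W_-(D)$ is $B$-adequate for $K_1$ with $v_B(W_-(D))=2v_B(K)+1$. A connected sum of two $B$-adequate diagrams is $B$-adequate, with the number of all-$B$ state circles equal to the sum minus one (one circle from each summand merges at the connect-sum arc and no new one-edged loops are created). Hence $W_-(D)\#D_2$ is $B$-adequate for $K_1\#K_2$, and
\[
v_B(W_-(D)\#D_2)=2v_B(K)+v_B(K_2).
\]
As $v_B$ is an invariant of any $B$-adequate diagram, $v_B(\bar D)=2v_B(K)+v_B(K_2)$.

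The second ingredient computes $v_B(\bar D)$ via the colored Jones polynomial. Applying Proposition \ref{BMTprop}\eqref{BMTpropa} to $K$ (with $a_2=c(K)/4$ and $a_1=(v_B(K)-c(K))/2$, using $\mathrm{wr}(K)=0$) yields $x^{*}(K_1)=4c(K)$ and $y^{*}(K_1)=4v_B(K)-8c(K)-2$; a parallel application of Proposition \ref{BMTprop}\eqref{BMTpropb} to $K^{*}$ (used to express $-d_-[J_{K_1}(n)]$ via the mirror identity $(W_-(K))^{*}=W_+(K^{*})$) gives $x(K_1)=4c(K)+2$. Using Lemma \ref{elementary} to transfer these coefficients to $K_1\#K_2$ and then equating with the adequate-diagram formulas of Lemma \ref{known}(a) applied to $\bar D$, comparison of quadratic coefficients gives $c_+(\bar D)=2c(K)+c_+(K_2)$ and $c_-(\bar D)=2c(K)+1+c_-(K_2)$, hence $c(\bar D)=4c(K)+1+c(K_2)$. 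Equating linear coefficients of $4d_+[J_{K_1\#K_2}(n)]$ and substituting this value of $c(\bar D)$ then yields
\[
v_B(\bar D)=2v_B(K)+v_B(K_2)-1,
\]
contradicting the $B$-adequate invariant value computed above.

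The main hurdle will be the careful bookkeeping of half-integers in Proposition \ref{BMTprop}: the $-\tfrac{1}{2}$ term appearing in the linear coefficient of BMT(a) is precisely what creates the single $-1$ discrepancy that drives the contradiction, so every half-integer cancellation must be tracked so that exactly this one survives after multiplying by $4$ and applying Lemma \ref{elementary}.
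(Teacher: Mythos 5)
Your proof is correct, and it follows the same overall strategy as the paper's --- reduce to $W_-$ by mirroring, assume an adequate diagram $\bar D$ exists, pin down $v_B(\bar D)$ as a $B$-adequate invariant via the diagram $W_-(D)\#D_2$, and contradict this with a value extracted from the colored Jones polynomial --- but the execution differs in a way worth noting. The paper first proves the analogue of Lemma~\ref{ifadequate} for $K_1\#K_2$ (the crossing number \emph{and} the Turaev genus of a hypothetical adequate $\bar D$, via the coefficients $d_2$ and $d_1+d_2$ of $d_+-d_-$), then uses the identity $2g_T=2-v_A-v_B+c$ to solve for $v_A(\bar D)$, and finally derives a mod-$4$ contradiction from the linear coefficient of $-4d_-$. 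You instead stay entirely on the $B$-side: you get $c_+(\bar D)$ and $c_-(\bar D)$ separately from the two quadratic coefficients $x^*$ and $x$ (the asymmetry $x(K_1)=4c(K)+2$ versus $x^*(K_1)=4c(K)$ coming from the $+\tfrac12$ in Proposition~\ref{BMTprop}(b)), which yields $c(\bar D)=4c(K)+1+c(K_2)$ without any Turaev genus input, and then read off $v_B(\bar D)=2v_B(K)+v_B(K_2)-1$ from the linear coefficient of $4d_+$, where the $-\tfrac12$ in Proposition~\ref{BMTprop}(a) produces the off-by-one clash with the diagrammatic value $2v_B(K)+v_B(K_2)$. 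I checked the coefficient bookkeeping ($x^*(K_1)=4c(K)$, $y^*(K_1)=4v_B(K)-8c(K)-2$, $x(K_1)=4c(K)+2$, and the $-2$ corrections from Lemma~\ref{elementary}) and it comes out exactly as you state. What your route buys is the elimination of the Turaev genus computation and of $v_A(\bar D)$ altogether, and a cleaner contradiction ($-1\neq 0$ rather than $-1\equiv 0 \bmod 4$); the only loose end is that you should say a word, as the paper does in Lemma~\ref{ifadequate}, about why the hypotheses of Proposition~\ref{BMTprop} hold, namely $a_1=(v_B(K)-c(K))/2\leq 0$ by \cite[Lemmas 3.6, 3.7]{KTran} and $a_2=c(K)/4>1/8$ since $K$ is non-trivial.
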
 
  \begin{proof} The claim is proved by applying the arguments applied to $K_1=W_{\pm}(K)$ in the proofs
  of Lemmas \ref{ifadequate} and \ref{notadequate} to the knot $K_1\# K_2$ and using the fact that the degrees of the colored Jones polynomial are additive under connected sum.
  For the convenience of the reader we outline the argument.
  
  First we claim that if $K_1\# K_2$ were adequate then we would have 
 \begin{align}
 \label{e.step}
 &c(K_1\# K_2)=4c(K)+1+c(K_2) ,\\
  &  g_T(K_1\# K_2)=c(K)+2 g_T(K)+g_T(K_2)-1.\notag 
    \end{align}
 
  To see this first write 
  $$ d_+[J_{K_1\#K_2}(n)]-d_-[J_{K_1\#K_2}(n)]=d_2(K_1\#K_2) n^2+d_1(K_1\#K_2) n+d_0(K_1\#K_2),$$
  then as in the  proof of Lemma \ref{notadequate}, we compute the coefficients $d_i(K_1\#K_2)$, for $i=1,2$ in two ways.

  One way to compute these coefficients  is using Lemma
  \ref{elementary} and Proposition \ref{BMTprop}. 
   By  Lemma \ref{elementary},
  $d_2(K_1\# K_2)=d_2(K_1)+d_2(K_2)$ while $d_1(K_1\# K_2)=d_1(K_1)+d_1(K_2)-1$, where
 by the calculations in  the proof of Lemma \ref{ifadequate},
  $ d_2(K_1)=2c(K)+\frac{1}{2}$ and
  $d_1(K_1)+d_2(K_1)=2-2g_T(K)-c(K)$. The corresponding quantities for $K_2$ are computed via Equation (\ref{eq:ad}).
  
   The second way to compute these coefficients, is to use Equation (\ref{eq:ad}) to obtain
  a second expression  for $d_2(K_1\# K_2) = \frac{c(K_1 \# K_2)}{2}$ and $d_2(K_1\# K_2)+d_1(K_1\# K_2) = 1-g_T(K_1 \# K_2)$. Finally, compare these two expressions to obtain the claim.

  Next apply the argument of the proof of Lemma \ref{notadequate} to show that $K_1\# K_2$ is non-adequate.
  By passing to mirror images as in the end of the proof of Lemma \ref{notadequate}, it is enough to prove that $K_1\#K_2$ with $K_1 = W_-(K)$ is non-adequate. 
To that end, start with $D=D(K)$ an adequate diagram of zero writhe and let $D_1:=W_{-}(D)$.  Also let $D_2$ be an adequate diagram of $K_2$.

As in the proof of  Lemma \ref{notadequate} conclude that $D_2\# D_1$ is a $B$-adequate diagram for $K_1\# K_2$ and that the quantities
$v_B(D_1\#D_2)=(2v_B(D)+1)+v_B(D_2)-1 = 2v_B(D)+v_B(D_2)$ and $c_+(D_1\#D_2) =4c_+(D)+c_+(D_2)$ are invariants of $K_1\# K_2$.

Next suppose,  for a contradiction, that $K_1\# K_2$ is adequate and let ${\bar D}$ be an adequate diagram.  We have 
$$
v_B({\bar D})=v_B(D_1\#D_2)=2v_B(D)+v_B(D_2) \ { \rm{and}}\ 
c_+({\bar D})=4c_+(D)+c_+(D_2).
$$
By \cite{Abe} and Equations \eqref{e.step} we have
  \begin{equation}
  g_T({\bar D})=g_T(K_1\# K_2)=c(K)+2 g_T(D)+g_T(D_2)-1.
 \label{eq:35}
  \end{equation}
  By   \eqref{e.step}, and the fact that adequate diagrams realize the  knot crossing number,  $c({\bar D})=4c(D)+1+c(D_2)$ and $c(K)=c(D)$.
 
  Now using the definition of the Turaev genus of knot diagrams to expand the leftmost  and the rightmost sides of Equation (\ref{eq:35}) we get $v_A({\bar D})=2v_A(D)+v_A(D_2)-1$.
  
  Next we will calculate the quantity  $y(K_1\# K_2)$ of Lemma \ref{elementary} in  two ways:
   
   Firstly, since we assumed that ${\bar D}$ is an adequate diagram for $K_1\# K_2$,  applying  Equation \eqref{top},  we get $y(K_1\# K_2)=2(c({\bar D}) -v_A({\bar D}))=8 c(D)+2c(D_2)-4 v_A(D) -2v_A(D_2)+4$.
   
   Secondly, by Lemma \ref{elementary}, we get  $y(K_1\# K_2)=y(K_1)+y( K_2)-1$, which combined with 
   Equations  (\ref{eq:34}) and (\ref{top}) gives  $y(K_1\# K_2)=8 c(D)-4 v_B(D) +2c(D_2)-2v_A(D_2)-1$. 
  We note that in order for the two resulting expressions for $y(K_1\# K_2)$ to be equal we must have
  $4v_A(D)+4=-4 v_B(D)-1$ or $-1\equiv 0\mod 4$, which is absurd.
  We conclude that $K_1\#K_2$ is non-adequate.
    \end{proof}
  
  Now we give the proof of Theorem \ref{sum}.
  
  \begin{proof} Note that if $K$ is the unknot then so is $W_{\pm}(K)$ and the result follows trivially.
  Suppose that $K$ is a non-trivial knot. Then by Lemma \ref{notadequate2} we obtain that $K_1\#K_2$ is non-adequate.
  
  As discussed above $jd_{K_1}=2 (4c(K)+1)=2(c(W_{\pm}(D))-1)$. On the other hand, $jd_{K_2}=2c(D_2)=2c(K)$ where $D_2$ is an adequate diagram for $K_2$.
  Hence, by Lemma \ref{elementary},  $jd_{K_1\#K_2}= jd_{K_1}+ jd_{K_2}=2(c(W_{\pm}(D))+c(D_2)-1),$ and setting $D_1=W_{\pm}(D)$ we obtain
$jd_{K_1\#K_2}=2(c(D_1\#D_2)-1)$. 
 Thus by Corollary \ref{criterion},  we obtain that
$c(K_1\#K_2)=c(D_1\#D_2)=c(D_1)+c(D_2)=c(K_1)+c(K_2)$, where the last equality follows since, by Theorem \ref{doubleadequate}, we have  $c(K_1)=c(D_1)=c(W_{\pm}(D)).$
  \end{proof}
  
  \begin{remark} In \cite{BMT3} Baker, Motegi and Takata computed the Jones slopes of  \emph{Mazur doubles} of adequate knots. Then they use the methods of this section to show that if $K$ is an adequate knot with crossing number $c(K)$ and writhe $\mathrm{wr}(K)$, then the crossing number of the Mazur double of $K$ is either $9c(K)+2$ or $9c(K)+3$.
   \end{remark}

\bibliographystyle{plain}
\bibliography{references}

\end{document}